\tikzstyle{vertex}=[circle, draw, fill=black, inner sep=0pt, minimum width=4pt]
\newcommand\blfootnote[1]{
  \begingroup
  \renewcommand\thefootnote{}\footnote{#1}
  \addtocounter{footnote}{-1}
  \endgroup
}
\title{Improved bounds for 1-independent percolation on $\Z^n$}
\author{Paul Balister\thanks{Mathematical Institute, University of Oxford, Oxford, OX2\thinspace6GG, UK.}
\thanks{Partially supported by EPSRC grant EP/W015404/1.}
\and Tom Johnston\thanks{School of Mathematics, University of Bristol, Bristol, BS8\thinspace1UG, UK.} \thanks{Heilbronn Institute for Mathematical Research, Bristol, UK.}
\and Michael Savery\footnotemark[1] \footnotemark[4]
\and Alex Scott\footnotemark[1] \thanks{Partially supported by EPSRC grant EP/V007327/1.}}
\date{\today}
\newtheorem{theorem}{Theorem}[section]
\newtheorem{lemma}[theorem]{Lemma}
\newtheorem{proposition}[theorem]{Proposition}
\newtheorem{corollary}[theorem]{Corollary}
\newtheorem{result}[theorem]{Result}
\newtheorem{claim}[theorem]{Claim}
\newtheorem{conjecture}[theorem]{Conjecture}
\newtheorem{problem}[theorem]{Problem}
\theoremstyle{definition}
\newtheorem{definition}[theorem]{Definition}
\newcommand\ceil[1]{\left \lceil #1 \right \rceil}
\newcommand\floor[1]{\left \lfloor #1 \right \rfloor}
\newcommand\eps{\varepsilon}
\newcommand\N{\mathbb{N}}
\newcommand\Z{\mathbb{Z}}
\newcommand\E{\mathbb{E}}
\newcommand\Prb{\mathbb{P}}
\newcommand\pmax{p_{\mathrm{max}}}
\newcommand\psite{p_{\mathrm{site}}}
\newcommand\pgiant{p_{\mathrm{giant}}}
\newcommand\cD{\mathcal{D}}
\newcommand\cC{\mathcal{C}}
\newcommand\cL{\mathcal{L}}
\newcommand\concat{\mathbin{\|}}
\begin{document}

\maketitle

\begin{abstract}
    
A 1-independent bond percolation model on a graph $G$ is a probability distribution on the spanning subgraphs of $G$ in which, for all vertex-disjoint sets of edges $S_1$ and $S_2$, the states of the edges in $S_1$ are independent of the states of the edges in $S_2$. Such a model is said to percolate if the random subgraph has an infinite component with positive probability. In 2012 the first author and Bollob\'as defined $\pmax(G)$ to be the supremum of those $p$ for which there exists a 1-independent bond percolation model on $G$ in which each edge is present in the random subgraph with probability at least $p$ but which does not percolate.
    
A fundamental and challenging problem in this area is to determine the value of $\pmax(G)$ when $G$ is the lattice graph $\Z^2$. Since $\pmax(\Z^n)\leq \pmax(\Z^{n-1})$, it is also of interest to establish the value of $\lim_{n\to\infty}\pmax(\Z^n)$. In this paper we significantly improve the best known upper bound on this limit and obtain better upper and lower bounds on $\pmax(\Z^2)$. In proving these results, we also give an upper bound on the critical probability for a 1-independent model on the hypercube graph to contain a giant component asymptotically almost surely.\blfootnote{Email: \{\href{mailto:balister@maths.ox.ac.uk}{\texttt{balister}}, \href{mailto:savery@maths.ox.ac.uk}{\texttt{savery}}, \href{mailto:scott@maths.ox.ac.uk}{\texttt{scott}}\}\texttt{@maths.ox.ac.uk},
\href{mailto:tom.johnston@bristol.ac.uk}{\texttt{tom.johnston@bristol.ac.uk}.}
}
\end{abstract}

\section{Introduction}

A \emph{percolation model} on a (possibly infinite) graph $G$ is a probability distribution on the subgraphs of~$G$. We say that a vertex or edge of $G$ is \emph{open} if it is present in the random subgraph of $G$ associated with the percolation model, and $\emph{closed}$ otherwise. A \emph{bond percolation model} on $G$ is a probability distribution on the spanning subgraphs of~$G$, meaning all vertices are open and the random subgraph depends only on the edges. An \emph{independent} bond percolation model on $G$ is a bond percolation model in which each edge is open independently. The focus of this paper is on the weaker condition of \emph{1-independence}: a bond percolation model on $G$ is said to be \emph{1-independent} if, for any two disjoint sets of edges $S_1$ and $S_2$ of $G$ such that no edge in $S_1$ shares a vertex with an edge in $S_2$, the states (i.e.\ open or closed) of the edges in $S_1$ are independent of the states of the edges in~$S_2$. If $G$ is an infinite graph, then we say that a percolation model on $G$ \emph{percolates} if the associated random subgraph of $G$ has an infinite component with positive probability. Note that by a slightly modified version of Kolmogorov's zero-one law, if a percolation model on a locally-finite infinite graph $G$ percolates, then in fact the random subgraph has an infinite component with probability one.

The study of 1-independent bond percolation models is motivated by their use as a tool to obtain bounds on critical probabilities of independent models via renormalisation (see, for example,~\cite{balister2005continuum} and Sections 3.5 and 6.2 of~\cite{bollobas2006percolation}). They have also been combined with renormalisation techniques to analyse bond percolation models with dependencies over a greater range (see~\cite{balister2005continuum} again for an example).

For an infinite graph~$G$, we wish to obtain results on whether or not a 1-independent model on $G$ in which each edge is open with some probability $p\in(0,1)$ percolates. For any given $p$ there will be (uncountably) many different 1-independent models, but we might expect that if $p$ is sufficiently large, then any 1-independent model on $G$ will percolate. In this spirit, in~\cite{balister2012critical} the first author and Bollob\'as defined $\cD_{\geq p}(G)$ to be the class of 1-independent bond percolation models on $G$ in which each edge is open with probability at least $p$. They then defined
\[
 \pmax(G)=\sup\big\{p\,:\,\text{some model in }
 \cD_{\geq p}(G)\text{ does not percolate}\big\}.
\]

Note that in this definition it is equivalent to ask for a model in which each edge is open with probability exactly $p$. Indeed, if $\mu\in\cD_{\geq p}(G)$ does not percolate, then the model in which we sample from $\mu$ and then independently delete each edge $e$ with probability $1-p/\Prb_\mu(e\text{ open})$ is a 1-independent bond percolation model which does not percolate, and in which each edge is open with probability exactly~$p$.

The value of $\pmax(G)$ for different infinite graphs~$G$ has been studied, along with various other properties of 1-independent bond percolation models, in~\cite{balister2012critical,balister2005continuum,day2020long,falgas-ravry20231-independent,liggett1997domination}. In particular, in~\cite{balister2012critical} the authors determine $\pmax(T)$ for all locally finite infinite trees $T$ in terms of a parameter known as the \emph{branching number} of~$T$. They go on to establish that $\pmax(G)\geq\frac12$ for all locally finite connected infinite graphs~$G$, and they construct such a $G$ which achieves this lower bound.

The case where $G$ is the lattice graph $\Z^2$ has been of particular interest, with the first author and Bollob\'as asking for the value of $\pmax(\Z^2)$.
\begin{restatable}[\cite{balister2012critical}]{question}{questionZtwo}\label{question:Z2}
 What is the value of $\pmax(\Z^2)$?
\end{restatable}

Considering $\Z^n$ more generally, they noted that $\pmax(\Z^n)$ is decreasing in $n$ (since if a model does not percolate on $\Z^n$ then it does not percolate on any $\Z^{n-1}$ subspace), and asked for the limit of this value as $n\to\infty$.
\begin{restatable}[\cite{balister2012critical}]{question}{questionZn}\label{question:Zn}
 What is the value of $\lim_{n\to\infty}\pmax(\Z^n)$?
\end{restatable}

The best known lower bound on both $\pmax(\Z^2)$ and $\lim_{n\to\infty}\pmax(\Z^n)$ comes from a construction of Day, Falgas-Ravry, and Hancock in~\cite{day2020long} which gives
\[
 \pmax(\Z^n)\geq 4-2\sqrt{3}= 0.535898\dots
\]
for all $n\geq 2$. It was conjectured by Falgas-Ravry and Pfenninger in~\cite{falgas-ravry20231-independent} that there exists some $n\geq 3$ for which this is an equality. In~\cite{day2020long} the authors also give a separate construction which yields
\[
 \pmax(\Z^2)\geq \psite^2+\tfrac{1}{2}(1-\psite),
\]
where $\psite=\psite(\Z^2)$ denotes the critical probability for independent site percolation on $\Z^2$, that is, the supremum of the $p$ such that percolation does not occur in the percolation model on $\Z^2$ in which vertices are open independently with probability $p$ and edges are open if both their endpoints are open. The best known rigorous lower bound on $\psite$ is $0.556$ due to van den Berg and Ermakov~\cite{berg1996new}. Substituting this into the above gives $\pmax(\Z^2)\geq0.531136$ which is slightly worse than the bound given by the first construction. However, using the non-rigorous estimate $\psite\approx 0.592746$ (see, for example,~\cite{ziff1992spanning}) gives an improved bound of $\pmax(\Z^2)\geq 0.554974$.

The previous best known upper bound on $\pmax(\Z^2)$ was $0.8639$ due to the first author, Bollob\'as, and Walters in~\cite{balister2005continuum}, and no better upper bounds on $\lim_{n\to\infty}\pmax(\Z^n)$ were known. In this paper, we improve both upper bounds as follows.

\begin{theorem}\label{thm:z2-upper-bound}
 $\pmax(\Z^2)\le 0.8457$.
\end{theorem}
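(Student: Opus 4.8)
The plan is to establish the equivalent statement that, for every $p>0.8457$, every $1$-independent bond percolation model $\mu$ on $\Z^2$ in which each edge is open with probability at least $p$ percolates; since $\pmax(\Z^2)$ is defined by a supremum, this yields the theorem. I would prove this by a renormalisation (block) argument, iterated so as to bootstrap off the previously known bound $\pmax(\Z^2)\le 0.8639$. Concretely, the aim is to find a finite connected subgraph $H\subseteq\Z^2$ (say a square block, with dimensions to be optimised) and a monotone increasing event $A$ depending only on the states of the edges of $H$, such that: on $A$, the open subgraph of $H$ has a component meeting a designated boundary vertex on each of the four sides of $H$; and every $1$-independent model with edge-marginals at least $p$ assigns $A$ probability exceeding the current upper bound on $\pmax(\Z^2)$.

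Given such $H$ and $A$, the renormalisation runs as follows. Place pairwise vertex-disjoint translates $(H_v)_{v}$ of $H$ at the sites $v$ of a widely spaced sublattice of $\Z^2$ isomorphic to $\Z^2$, and for each pair of neighbouring sites $v,w$ join the appropriate designated vertices of $H_v$ and $H_w$ by a thin ``corridor'' path in $\Z^2$, with all corridors pairwise vertex-disjoint and each disjoint from every copy $H_u$ except at its two endpoints. Declare a bond $vw$ of the sublattice \emph{open} precisely when $A$ holds on both $H_v$ and $H_w$ and every edge of the corridor between them is open. Since two bonds of the sublattice sharing no vertex involve four distinct sites, and hence collections of copies and corridors that are vertex-disjoint from one another, the $1$-independence of $\mu$ makes this renormalised model a $1$-independent bond percolation model on $\Z^2$ in its own right. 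An infinite open cluster in the renormalised model yields an infinite open cluster of $\mu$, because along an infinite path of open sublattice bonds the components guaranteed by $A$ in consecutive copies, spliced by the open corridors joining them, form one infinite open cluster. Writing
\[
 f(p):=\inf\bigl\{\Prb_\nu\bigl(A\text{ on both copies and the corridor open}\bigr)\,:\,\nu\in\cD_{\geq p}(\Z^2)\bigr\},
\]
which depends only on $p$, we conclude that $\mu$ percolates whenever $f(p)>\pmax(\Z^2)$. Hence, starting from $\pi_0:=0.8639$ and iterating $\pi_{k+1}:=\inf\{p:f(p)>\pi_k\}$, we obtain a non-increasing sequence of valid upper bounds for $\pmax(\Z^2)$; optimising $H$, $A$, and the corridor geometry, the goal is to make its limit (a solution of $f(\pi)=\pi$) at most $0.8457$.

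The technical heart---and the step I expect to be the main obstacle---is to prove a good enough lower bound on $f(p)$; equivalently, to show that \emph{every} $1$-independent model on the finite graph consisting of two copies of $H$ and a joining corridor, with all edge-marginals at least $p$, gives probability close to $1$ to the required event already for moderately large $p$. The naive bound $\Prb_\nu(A)\ge 1-|E(H)|(1-p)$ from a union bound over edges is far too weak, since it forces $A$ only for $p$ extremely close to $1$ and, moreover, a strongly dependent model can defeat any single prescribed open path with probability about $1-p$; one must therefore genuinely use $1$-independence together with redundancy built into $A$ (many nearly disjoint open paths realising each connection), for instance by expressing $A$ as an intersection of connection events supported on sub-blocks so placed that well-separated sub-blocks have vertex-disjoint edge sets---hence independent states---and then combining the resulting estimates. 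Carrying this out sharply, over a block $H$ large enough to exploit the redundancy yet small enough to analyse (most likely via a computer-verified bound for the ensuing finite, non-convex, polynomially constrained optimisation over $1$-independent laws), is what should turn $0.8639$ into $0.8457$. One finally checks the routine points: that the corridor and incidence geometry really do preserve $1$-independence of the renormalised model, that the designated components chain correctly so that the $\Z^2$-cluster produced is genuinely infinite, and that the iteration converges to a value at most $0.8457$ rather than merely approaching one.
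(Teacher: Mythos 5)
There is a genuine gap, and it is structural rather than a matter of sharpening estimates. Your renormalised bond event is contained in the event that a single prescribed edge (the first edge of the corridor) is open, and also in the event that a single designated boundary vertex of $H_v$ is not isolated; in either case its probability under an adversarial $1$-independent law (or even under the i.i.d.\ law with marginal exactly $p$) is at most $p$. Hence $f(p)\le p$ for every $p$, so the condition $f(p)>\pi_0=0.8639$ can never be met for $p$ near $0.8457$, and your iteration $\pi_{k+1}=\inf\{p:f(p)>\pi_k\}$ satisfies $\pi_{k+1}\ge\pi_k$: it is non-decreasing, not non-increasing, and the bootstrap never starts. The self-criticism you direct at the naive union bound for $A$ applies with full force to the corridor and to the designated-vertex requirement themselves; any renormalisation whose bond event pins down a specific edge or a specific vertex connection caps the renormalised marginal at $p$ and therefore cannot lower an upper bound on $\pmax(\Z^2)$ by comparison with a weaker threshold.

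The paper's proof avoids exactly this trap. It tiles $\Z^2$ by \emph{adjacent} $2\times2$ squares $S_u$ (no corridors, no wasted spacing), and declares a renormalised bond $uv$ open when the $4\times2$ rectangle $S_u\cup S_v$ contains an open component meeting each of $S_u$ and $S_v$ in a ``large'' set, where large means meeting a $3$-element set $\cL_u\subseteq S_u$ in at least two vertices (the $\cL_u$ being chosen randomly with a parameter $\theta=0.18$ to squeeze out a little more). Because any two large subsets of $S_u$ intersect, open renormalised paths splice into open paths of the original model without any corridor, and because the event has many disjointly realisable witnesses and pins down no single edge or vertex, its probability can genuinely exceed $p$: a \emph{linear} relaxation of the $1$-independence constraints on the ten edges of the $4\times2$ rectangle (solved by LP, with dual feasibility checked) certifies lower bounds on the renormalised marginals. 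Since these marginals differ for the two types of renormalised edge, the paper works in a two-parameter class $\cD_{\ge p,\ge p'}$ and iterates the renormalisation thirteen times from $p_0=p'_0=0.8457$ until both parameters exceed $0.8639$, at which point the earlier result of Balister--Bollob\'as--Walters applies. If you replace your spaced blocks and corridors by edge-to-edge blocks with an overlap-type bond event of this kind (and replace the non-convex optimisation over $1$-independent laws by its LP relaxation, which still gives valid lower bounds), your outline becomes essentially the paper's argument; as it stands, the construction cannot work for any choice of $H$, $A$, or corridor geometry.
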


\begin{theorem}\label{thm:zn-limit}
 $\lim_{n\to\infty}\pmax(\Z^n)\le 0.5847$.
\end{theorem}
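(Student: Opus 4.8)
The plan is to renormalise onto $\Z^2$, using as the key input our bound of $0.5847$ on the critical probability for a $1$-independent model on the hypercube $Q_k$ to contain (asymptotically almost surely, as $k\to\infty$) a component on more than half of its vertices. First note that $\pmax$ cannot increase when one passes to an induced subgraph: if $H$ is an induced subgraph of $G$, then the restriction to $E(H)$ of any $1$-independent model on $G$ is a $1$-independent model on $H$ with the same edge-marginals, and an infinite component inside $H$ is an infinite component of $G$; hence $\pmax(G)\le\pmax(H)$. For every $k$ the Cartesian product $\Z^2\,\square\,Q_k$ (of $\Z^2$ with the $k$-dimensional hypercube) is an induced subgraph of $\Z^{k+2}$, so $\pmax(\Z^{k+2})\le\pmax(\Z^2\,\square\,Q_k)$. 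Since $\pmax(\Z^n)$ decreases in $n$ and is bounded below by $\tfrac12$, we have $\lim_n\pmax(\Z^n)=\inf_n\pmax(\Z^n)$, so it suffices to prove: for every $\eps>0$ there is a $k$ such that every $1$-independent model on $\Z^2\,\square\,Q_k$ with all edge-marginals at least $0.5847+\eps$ percolates.

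Fix $\eps>0$, set $p=0.5847+\eps$, and let $\mu$ be such a model on $\Z^2\,\square\,Q_k$, with $k$ large and to be chosen. Write $\Z^2\,\square\,Q_k$ as the disjoint union of the blocks $B_x:=\{x\}\times Q_k\cong Q_k$ for $x\in\Z^2$, together with a perfect matching between $B_x$ and $B_{x'}$ for each edge $xx'$ of $\Z^2$. Distinct blocks are vertex-disjoint, so by $1$-independence the configurations $\mu$ induces inside the blocks are mutually independent. Call $x$ \emph{good} if the configuration in $B_x$ has a component on at least $(\tfrac12+c)2^k$ vertices, for a constant $c=c(\eps)>0$. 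By our hypercube bound, since $p$ exceeds $0.5847$, for a suitable $c>0$ and all large $k$ each $x$ is good with probability $\ge1-\eta$, where $\eta=\eta(k)\to0$, and these events are independent over $x\in\Z^2$. The component witnessing goodness is unique (two sets of size $>2^k/2$ must meet), and when adjacent $x,x'$ are both good their two giants, regarded as subsets of $Q_k$, share at least $2c\cdot2^k$ vertices, so at least $2c\cdot2^k$ matching edges of $\Z^2\,\square\,Q_k$ have one endpoint in each giant.

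Next, build a renormalised bond percolation model on $\Z^2$ by declaring the edge $xx'$ \emph{open} if $x$ and $x'$ are both good and at least one open matching edge joins their giants. If $xx'$ and $yy'$ are disjoint edges of $\Z^2$, the first event depends only on edges of $\Z^2\,\square\,Q_k$ meeting $B_x\cup B_{x'}$ and the second only on edges meeting $B_y\cup B_{y'}$, which are vertex-disjoint edge-sets; so the renormalised model is $1$-independent on $\Z^2$. Suppose we can show that, conditional on adjacent $x,x'$ both being good, the edge $xx'$ is open with probability $\ge1-\eta'$ for some $\eta'=\eta'(k)\to0$. Then the renormalised model has all marginals at least $1-2\eta-\eta'$, which exceeds $0.8457\ge\pmax(\Z^2)$ once $k$ is large; by Theorem~\ref{thm:z2-upper-bound} and the definition of $\pmax$ it percolates, and an infinite open cluster in it yields an infinite component of $\mu$, since along such a cluster the consecutive giants are spliced together through the open matching edges and each giant is internally connected. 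This completes the proof.

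The main obstacle is precisely the conditional estimate $\Prb(xx'\text{ open}\mid x,x'\text{ good})\ge1-\eta'$. The difficulty is that the matching edges between $B_x$ and $B_{x'}$ are \emph{not} independent of the configurations inside those blocks — each such edge shares a vertex with block-internal edges — so one cannot simply reveal the two blocks and then inspect the matching. The remedy I would use is a carefully ordered, multi-round exposure. Fix a set $T\subseteq Q_k$ of ``ports'' of a suitable positive density; reveal first only the port matching edges $\{\,\{(x,v),(x',v)\}:v\in T\,\}$, which form a vertex-disjoint family and so behave as independent edges of marginal $\ge p$, whence a $(p-o(1))$-fraction of them are open; then reveal the block interiors restricted to edges avoiding $\{(x,v):v\in T\}\cup\{(x',v):v\in T\}$, which still carry a majority component provided $T$ has the right density; and finally splice each open port into the giant via one further block edge per port, controlling that pair of dependent edges from below using $\Prb(A\cap B)\ge\Prb(A)+\Prb(B)-1>0$ (valid because $p>\tfrac12$). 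The bookkeeping — checking that enough ports survive all three stages, which forces the surviving ports to outnumber the non-giant vertices and is, together with the requirement that the hypercube giant exceed half of $B_x$, exactly where the numerical value $0.5847$ is used, and arranging the port sets consistently for all four neighbours of each block simultaneously — is the main technical work.
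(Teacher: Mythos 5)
Your reduction to $\Z^2\,\square\,Q_k$ and the renormalisation onto $\Z^2$ (using Theorem~\ref{thm:z2-upper-bound} for the final step) is in the same spirit as the paper, but the key input you invoke does not exist. You assume that for marginals above $0.5847$ the hypercube $Q_k$ a.a.s.\ contains a component on more than half of its vertices, and your whole splicing argument (uniqueness of the giant, and the $2c\cdot 2^k$ overlap of adjacent giants as subsets of $Q_k$) rests on this majority property. What is actually proved (Theorem~\ref{thm:cube} together with Lemma~\ref{lem:q6-p}) is only that there is a component of size at least $C\cdot 2^k$ for \emph{some} constant $C>0$, with no claim that $C>\tfrac12$; a statement of the ``more than half'' type is essentially the conjecture of Falgas-Ravry and Pfenninger quoted in Section~\ref{sec:open-probs}, not a theorem. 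Moreover the number $0.5847$ has nothing to do with a half-density giant: it arises from the LP computation in Lemma~\ref{lem:q6-p} showing that the probability that $Q_6$ is \emph{connected} exceeds $(1-p)^{32}\varphi$, which feeds a golden-ratio recursion ($s_{i+1}=s_i^2-q_i$, $r_i\to 0$) driving the conditional edge-failure probability to zero; the connectivity probability itself is tiny ($\approx 10^{-12}$), so no ``good with probability $1-\eta(k)$, $\eta\to 0$'' statement is available at the block level in your sense. The paper's route deliberately avoids needing any density statement by first renormalising inside the hypercube factor and then gluing across the slab $\Z^2\times\{0,1\}^{m}$ via the vertex-good/$\eps$-edge-good machinery (many vertex-disjoint short paths whose internal vertices are untouched by previous revelations), rather than via overlapping majority giants.

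Separately, the step you yourself identify as the main obstacle --- the conditional estimate $\Prb(xx'\text{ open}\mid x,x'\text{ good})\ge 1-\eta'$ --- is not actually resolved by your three-round exposure. After you reveal the block interiors (on edges avoiding the port vertices), each ``splice'' edge from a port into the putative giant shares its non-port endpoint with edges whose states have already been revealed, so $1$-independence gives \emph{no} lower bound on its conditional probability of being open; the inequality $\Prb(A\cap B)\ge\Prb(A)+\Prb(B)-1$ only controls the two edges unconditionally, not after conditioning on a configuration that meets them in a vertex. Likewise, the majority-component input (even if it were available for $Q_k$) would not directly apply to $Q_k$ minus the port vertices, and the consistency of port sets across the four neighbours of a block is left open. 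These are precisely the dependence problems the paper's argument is engineered to sidestep, so as it stands the proposal has a genuine gap both in its main hypothesis and in its key estimate.
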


We also improve the lower bound for $\pmax(\Z^2)$.

\begin{theorem}\label{thm:z2-lower-bound}
 $\pmax(\Z^2)\ge \tfrac{1}{32}(35-3\sqrt{33})= 0.555197\dots$
\end{theorem}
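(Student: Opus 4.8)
The plan is to prove the lower bound by exhibiting, for every $p<\tfrac1{32}(35-3\sqrt{33})$, a $1$-independent bond percolation model on $\Z^2$ in which every edge is open with probability at least $p$ and which almost surely has no infinite component. Refining the constructions of Day, Falgas-Ravry and Hancock, I would build the model from independent data attached to the vertices: assign to each $v\in\Z^2$ an i.i.d.\ label $X_v$ taking values in a small finite set, attach to each edge $e$ an independent auxiliary variable, and declare an edge $e=uv$ open according to a fixed rule depending only on $(X_u,X_v)$ and the auxiliary variable of $e$ (for instance, independently designating each vertex a ``defect'' with some probability and closing prescribed edges near defects and pairs of defects). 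Any model of this shape is automatically $1$-independent, since edges with disjoint vertex sets then depend on disjoint, hence independent, collections of variables. The rule should be tuned, via one or two free probability parameters $\theta$, so that closed edges are biased to assemble into short configurations that obstruct local crossings.

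The first, routine, task is to compute $p(\theta)=\Prb(e\text{ open})$, which is the same for every edge by translation invariance. The substantive task is to show that, for a suitable range of $\theta$, the model does not percolate. I would argue this by a contour (Peierls-type) argument in the planar dual: if the origin lay in an infinite open cluster then there could be no circuit in the dual lattice all of whose crossed primal edges are open, so it suffices to show that almost surely the origin is enclosed by infinitely many such closed dual circuits. For a fixed dual circuit of length $\ell$, the event that every primal edge it crosses is open factorises, by $1$-independence, into $\Theta(\ell)$ groups of pairwise vertex-disjoint edges and so has probability at most $\rho(\theta)^{c\ell}$ for a fixed $c>0$ and an appropriate $\rho(\theta)<1$; summing over the at most exponentially many dual circuits around the origin gives a convergent series once $\theta$ lies in the admissible range, and Borel--Cantelli then produces the nested closed circuits. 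Translation invariance upgrades ``the origin's open cluster is finite almost surely'' to non-percolation. (A stochastic-domination argument against a suitable subcritical finite-range dependent percolation is an alternative route to the same conclusion.)

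It remains to optimise. The argument above is valid precisely when $\theta$ satisfies a certain inequality; on its boundary this becomes an equality, and maximising $p(\theta)$ subject to that constraint reduces to a quadratic in $p$, namely $32p^2-70p+29=0$, whose root in $(0,1)$ is $\tfrac1{32}(35-3\sqrt{33})=0.555197\dots$. For any target $p$ below this value one then picks $\theta$ in the admissible range with $p(\theta)\ge p$, yielding a valid non-percolating model and completing the proof.

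The main obstacle is the non-percolation step carried out with $p$ as large as $0.555$. Since $p>\tfrac12$, the open edges cannot be dominated by a subcritical independent bond percolation, so the contour estimate must genuinely exploit how the construction correlates nearby closed edges rather than just their marginals; obtaining exponential decay with a sharp enough constant — so that the Peierls sum converges exactly up to the claimed threshold — is the delicate point, and it is this balance between closing enough edges to block crossings and keeping every edge open with probability $p$ that is responsible for the quadratic, and hence for the value $\tfrac1{32}(35-3\sqrt{33})$.
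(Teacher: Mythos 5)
There is a genuine gap: your plan hinges on proving, by a contour/Peierls argument, that a single explicit 1-independent model with edge probability about $0.5552$ does not percolate, with the admissible range of the parameter $\theta$ ending exactly at the root of $32p^2-70p+29=0$. You never derive that quadratic from the contour estimate, and no such estimate can plausibly deliver it. With only 1-independence to exploit, a union bound over dual circuits (or over open primal paths) extracts at most a factor $p^{c\ell}$ with $c\le\tfrac12$ from vertex-disjoint subfamilies of the $\ell$ edges involved, while the number of relevant circuits grows like $\mu^{\ell}$ with $\mu\approx 2.64$; at $p\approx 0.555$ the sum diverges badly, and even granting extra correlation structure of the construction, there is no mechanism in your sketch that pins the breakdown point at $\tfrac1{32}(35-3\sqrt{33})$ rather than at some uncontrolled constant. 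Indeed Result~\ref{res:high-conf} indicates $\pmax(\Z^2)\ge 0.5921$, so one should not expect any construction whose non-percolation threshold is provably \emph{equal} to $0.5552$; the appearance of $\sqrt{33}$ in the statement is not the signature of a sharp contour computation.

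The actual proof works quite differently: it plays two constructions off against the single unknown quantity $\psite=\psite(\Z^2)$. The new construction assigns to each even-sublattice vertex an independent label in $\{\mathrm{A},\mathrm{U},\mathrm{D},\mathrm{L},\mathrm{R}\}$ (with $\Prb(\mathrm{A})=1-p$) and opens edges accordingly; any infinite open path must use `$\mathrm{A}$' vertices, which form site percolation at density $1-p$ on the matching (diagonal) lattice, and this is subcritical for $p>\psite$ by the duality $1-\psite$ for that lattice --- no Peierls argument is needed, only this known exact relation. Each edge is open with probability $1-\tfrac34p$, giving $\pmax(\Z^2)\ge 1-\tfrac34\psite$, a bound \emph{decreasing} in $\psite$, whereas the Day--Falgas-Ravry--Hancock bound $\pmax(\Z^2)\ge\psite^2+\tfrac12(1-\psite)$ is increasing on the relevant range. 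Since $\psite$ is not known rigorously, one takes
\[
 \pmax(\Z^2)\ \ge\ \inf_{x\in[0,1]}\max\bigl\{1-\tfrac34x,\ x^2+\tfrac12(1-x)\bigr\}
 \ =\ \tfrac1{32}\bigl(35-3\sqrt{33}\bigr),
\]
the value at the crossing point $4x^2+x-2=0$; this balancing of two $\psite$-dependent bounds, not a tuned contour sum, is where the stated constant comes from, and it is the idea missing from your proposal.
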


Note that this new bound is an improvement even on the non-rigorous bound stated above. However, it is likely that the true value of $\pmax(\Z^2)$ is still some distance from this, as suggested by the following high confidence result which gives an even better bound (see Section~\ref{sec:z2-lower-bounds}).

\begin{result}\label{res:high-conf}
With high confidence
\textup{(}$p$-value $<10^{-11}$\textup{)} we have
$\pmax(\Z^2)\ge0.5921$.
\end{result}

The proof of Theorem~\ref{thm:z2-upper-bound} uses a $2\times 2$ renormalisation argument similar to that employed in~\cite{balister2005continuum}.
We obtain a better bound than was derived in that paper by using a different condition for two renormalised sites to be joined by an edge. To calculate a lower bound on the probability that each edge is present, we solve two linear programming problems that are relaxations of the 1-independence constraints on a $4\times 2$ grid.

The proof of Theorem~\ref{thm:zn-limit} relies on the following result which translates a condition on 1-independent bond percolation models on the hypercube to one on the lattice. Here we denote by $Q_n$ the $n$-dimensional hypercube graph which has vertex set $\{0,1\}^n$ and in which vertices are joined by an edge if they differ in exactly one coordinate.

\begin{theorem}\label{thm:lattice}
 Let $p\in(0,1]$ and\/ $k\in\N$ be constants and let $P>(1-p)^{2^{k-1}}\varphi$, where $\varphi=\frac{1}{2}(\sqrt{5}+1)$ is the golden ratio.
 Suppose that for any model in $\cD_{\ge p}(Q_k)$, the probability that the graph is connected is at least~$P$. Then for large enough~$n$, every model in $\cD_{\ge p}(\Z^n)$ percolates. In particular,
 \[\lim_{n\rightarrow\infty} \pmax(\Z^n)\leq p.\]
\end{theorem}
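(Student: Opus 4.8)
The plan is to deduce percolation on $\Z^n$ from a statement about $1$-independent percolation on the hypercube, and then feed that into a renormalisation argument. Write $x=(1-p)^{2^{k-1}}$, so the hypothesis reads $P>x\varphi$, equivalently $1-x^2/P^2>1-\varphi^{-2}=\varphi^{-1}$. The central lemma will be: under the hypothesis, for all large $n$, every model in $\cD_{\ge p}(Q_n)$ contains, asymptotically almost surely, a component that uses more than half of the vertices of $Q_n$ and meets every one of the $2n$ facets of $Q_n$ in a positive fraction of its vertices (this is the promised bound on the critical probability for a giant component on the hypercube). Granting this, I would partition $\Z^n$ into translates of $\{0,1\}^n$ — one copy of $Q_n$ per element of $(2\Z)^n$ — and declare a box \emph{good} if the restriction of the model to its internal edges has such a large, facet-spanning component, and a renormalised edge $\{z,z'\}$ \emph{open} if both boxes are good and an open edge of $\Z^n$ joins their large components across the facet they share. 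The events "box $z$ good" depend on pairwise disjoint and non-adjacent edge sets, and the set of edges determining a renormalised edge depends only on the corresponding two boxes, so the renormalised process is a $1$-independent bond percolation on a copy of $\Z^n$. By the lemma each box is good with probability $\to1$; the shared-facet traces of two adjacent large components both exceed half the facet and so overlap, and among the (many) matching edges across the overlap one is open with probability $\to1$; hence the renormalised model lies in $\cD_{\ge q}(\Z^n)$ with $q\to1$. Since $\pmax(\Z^n)\le\pmax(\Z^2)<1$ by Theorem~\ref{thm:z2-upper-bound}, for $n$ large we get $q>\pmax(\Z^n)$, so the renormalised model percolates, yielding an infinite component of $\Z^n$ and the conclusions $\pmax(\Z^n)\le p$ and $\lim_n\pmax(\Z^n)\le p$.

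For the hypercube lemma I would fix $k$ and view $Q_n\cong Q_k\mathbin{\square}Q_m$ with $m=n-k$, so that $Q_n$ splits into $2^m$ vertex-disjoint \emph{fibres}, each a copy of $Q_k$, indexed by $V(Q_m)$, with a perfect matching of $2^k$ edges joining the two fibres over each edge of the base $Q_m$. Two independence facts are used throughout. First, the edges of any matching are pairwise non-adjacent, hence mutually independent in a $1$-independent model, so the $2^k$ edges between two adjacent fibres are all closed with probability at most $(1-p)^{2^k}=x^2$. Second, the indicator that a fibre is connected depends only on that fibre's internal edges, and these sets are disjoint and non-adjacent across fibres, so the events "fibre $u$ is connected" are mutually independent, each of probability at least $P$; thus the connected (``good'') fibres form a Bernoulli($\ge P$) site configuration on $Q_m$, and conditioning on two adjacent fibres both being good, they are joined by an open matching edge with probability at least $1-x^2/P^2>\varphi^{-1}$. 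One then runs an exploration/branching-process argument on $Q_m$, in the spirit of the analysis of the giant component of the random subgraph of the hypercube: since the base degree $m$ is large, the exploration of good, mutually linked fibres is comfortably supercritical and a $(1-o(1))$-fraction of the good fibres lie in a single component; attaching in addition the fragments of the remaining fibres to this component via their matchings shows that the resulting component of $Q_n$ has relative size at least $1-(1-p)(1-P)-o(1)$, which exceeds $\tfrac12$ whenever the hypothesis is non-vacuous, and by the symmetry of the construction meets each facet in a constant fraction of its vertices.

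The step I expect to be the main obstacle is making this last argument quantitative enough that \emph{precisely} the golden-ratio threshold suffices: controlling the self-intersections in the fibre exploration, bounding below the size of the giant component of the correlated fibre process on $Q_m$, and verifying that the conditional linking probability $\varphi^{-1}$ is exactly the amount of room needed for the exploration to be robustly supercritical and for the induced component of $Q_n$ to be genuinely large and well spread over all facets. A secondary, more routine point is checking that the ``good box / open renormalised edge'' process is $1$-independent with edge-probabilities tending to $1$ uniformly in the position, so that Theorem~\ref{thm:z2-upper-bound} applies; this should follow from the disjointness of the box edge sets together with concentration of the number of good fibres.
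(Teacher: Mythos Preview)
Your overall architecture (collapse cubes, then appeal to $\pmax(\Z^2)<1$) is right, but the plan has a real gap precisely at the step you flag, and it is not just a matter of making an exploration quantitative.

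After a single $Q_k$-collapse you land in a model in $\cD(Q_m,P,1-x^2/P^2)$ with conditional edge probability merely $>\varphi^{-1}\approx0.618$. You then hope a branching-process/exploration argument will produce a giant because the base degree is large. But in a $\cD(\cdot,p_v,p_e)$ model there is no independent branching to exploit: the adversary may correlate all edges incident to a given vertex, so large degree buys nothing, and indeed the paper only establishes a giant once $p_e>\tfrac{18}{19}$ (Proposition~\ref{prop:cube}). The golden ratio is \emph{not} a percolation threshold here; it enters for a different reason. The paper iterates the collapse: with $q_i=(1-p)^{2^i}$, $s_{i+1}=s_i^2-q_i$ and $r_i=q_i/s_i^2$ one gets $1/r_{i+1}=(1/r_i-1)^2$, and $\varphi^2$ is exactly the repelling fixed point of $t\mapsto(t-1)^2$. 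The hypothesis $P>x\varphi$ says $1/r_k>\varphi^2$, which forces $r_i\to0$, i.e.\ $p_e\to1$; after a bounded number $I$ of steps one reaches $p_e>\tfrac{18}{19}$ and only then invokes the (robust but crude) Proposition~\ref{prop:lattice}. Your single-step scheme skips this iteration, which is where $\varphi$ actually lives.

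There is a second, smaller issue with your $\{0,1\}^n$-box renormalisation of $\Z^n$. To make the large components of two adjacent boxes overlap on the shared facet you need each to occupy more than half of that facet; ``a positive fraction'' does not suffice, and even the iterated argument only yields a component of size $C\cdot2^n$ for some $C>0$, not $>\tfrac12\cdot2^n$. The paper avoids this by never renormalising the $\Z^2$ factor: it works on $\Z^2\times\{0,1\}^{n-2}$, collapses only the hypercube coordinates until $p_e>\tfrac{18}{19}$, and then uses the $\eps$-edge-good machinery of Section~\ref{sec:cube-prop} (not a size-$>\tfrac12$ overlap) to glue adjacent $Q_n$-slabs, obtaining a $1$-independent model on $\Z^2$ with edge probability $1-e^{-\Omega(n^2)}$.
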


In order to extract a concrete bound from this result we need the following lemma, which states that $p=0.5847$ and $k=6$ satisfy the conditions of the theorem.

\begin{lemma}\label{lem:q6-p}
 Let $\varphi$ be the golden ratio. Then there exists a constant $P>(1-0.5847)^{32}\varphi$ such that in any model
 in $\cD_{\ge 0.5847}(Q_6)$, the probability that the graph is connected is at least~$P$.
\end{lemma}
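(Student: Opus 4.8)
The plan is to bound the probability that a model in $\cD_{\ge 0.5847}(Q_6)$ produces a \emph{dis}connected graph, and to show this failure probability is small enough that the connectivity probability exceeds $(1-0.5847)^{32}\varphi = (0.4153)^{32}\varphi$, which is an extraordinarily tiny number (around $10^{-12}$). So we have an enormous amount of room: we just need any non-trivial bound showing that $Q_6$ is connected with probability extremely close to $1$, and in fact even a bound of the form $P \ge 10^{-10}$ would suffice, though we should aim to do much better to be safe. First I would set up the standard union bound over vertex subsets: the graph fails to be connected precisely when there is a nonempty proper subset $A \subsetneq V(Q_6)$ such that no open edge crosses the cut $(A, V\setminus A)$. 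Summing over all $2^{63}-1$ relevant subsets is too crude; instead I would partition subsets by the size of their edge boundary $\partial A$ in $Q_6$, and use the edge-isoperimetric inequality for the hypercube (the vertex subsets of given size minimising the edge boundary are subcubes / Hamming balls, by Harper's theorem or the simpler Bernstein/entropy bounds), so that $|\partial A| \ge$ some explicit function of $|A|$.

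Next, for a fixed cut with $|\partial A| = m$, I would bound the probability that \emph{all} $m$ boundary edges are closed. Here 1-independence is exactly the tool: although we cannot multiply probabilities over all $m$ edges, we can select a matching (a set of pairwise vertex-disjoint edges) among the boundary edges, and within a matching the edge states are mutually independent, each closed with probability at most $1 - 0.5847 = 0.4153$. The cut in $Q_6$ always contains a reasonably large matching — since $Q_6$ is $6$-regular, $\partial A$ has maximum degree $6$, so by a greedy/Vizing-type argument it contains a matching of size at least $m/11$ (or one can do better, $\lceil m/6 \rceil$ by König-type reasoning on bipartite-like structure, but $m/11$ is more than enough). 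Thus $\Prb(\text{all of }\partial A\text{ closed}) \le (0.4153)^{m/11}$.

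Then I would assemble the union bound: $\Prb(\text{disconnected}) \le \sum_{A} (0.4153)^{|\partial A|/11}$, split the sum by $|A| = j$ for $j = 1, \dots, 32$ (pairing $A$ with its complement to restrict to $|A| \le 32$), using $\binom{64}{j}$ for the number of subsets of size $j$ and the isoperimetric lower bound $f(j)$ on $|\partial A|$. For small $j$ the boundary is large ($f(1) = 6$, and $f(j)$ grows) so the terms $\binom{64}{j}(0.4153)^{f(j)/11}$ are small; the potentially dangerous regime is moderate $j$ where $\binom{64}{j}$ is huge, but there $f(j)$ is of order $j$ (a constant fraction, since a balanced cut in $Q_6$ has $\Theta(2^6)$ boundary edges), and one checks $\binom{64}{j} \le 2^{64} = 2^{64}$ against $(0.4153)^{f(j)/11}$. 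Since $f(j) \ge j$ for all $j$ (each vertex in $A$ with $j \le 32$ has at least one neighbour outside, crudely, and better bounds hold), we get each term at most $2^{64}(0.4153)^{j/11}$, and even summing all $j$ from $1$ to $32$ this is dominated by... actually we need $f(j)/11$ large enough to beat $2^{64}$, i.e. $f(j) \gtrsim 120$; for small $j$ this fails, so the honest approach is: for $j$ small ($j \le$ some threshold), bound the number of cuts more carefully (e.g.\ $\binom{64}{j} \le 64^j$ and $f(j) \ge 6 + (j-1) \cdot c$ using the exact isoperimetric profile of $Q_6$), and for $j$ in the bulk use the strong isoperimetric bound $f(j) \ge \frac{1}{2}\min(|A|, |A^c|) \cdot (\text{something})$ — in fact for $Q_6$ one has $|\partial A| \ge |A|(6 - \log_2 |A|)$, so at $j = 32$, $|\partial A| \ge 32$, which gives only $(0.4153)^{32/11} \approx 0.07$, and multiplied by $\binom{64}{32} \approx 1.8 \times 10^{18}$ this is far too big. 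The main obstacle is therefore the bulk regime, and the fix is to \emph{not} use a single matching of size $m/11$ but to use the full independence more cleverly: partition $\partial A$ into few matchings (at most $6$ by Vizing, since $\Delta \le 6$ and actually $\chi'(Q_6\text{ subgraph}) \le 6$), note the events ``matching $i$ all closed'' are not independent across $i$ but each has probability $\le (0.4153)^{|M_i|}$, and take the best single matching, which has size $\ge m/6$; still $(0.4153)^{32/6} \approx 0.0017$, times $10^{18}$ is still too big. So the real resolution must use a sharper isoperimetric inequality in the bulk: the true edge-boundary of a balanced subset of $Q_6$ is $\binom{64}{32}$-many sets but the \emph{typical} and worst reasonable ones have $|\partial A| \ge 2^6 = 64$ unless $A$ is a subcube, and combining the count of near-subcube sets (few of them) with the fact that far-from-subcube sets have $|\partial A| \ge c \cdot 2^6$ for $c$ close to $1$. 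I expect the cleanest writeup uses Harper's exact vertex-isoperimetric theorem to get, for each $j$, the precise minimum boundary $\partial(j)$, then verifies the finite sum $\sum_{j=1}^{32}\binom{64}{j}(0.4153)^{\partial(j)/6} < 1 - (0.4153)^{32}\varphi$ numerically; since the claim is ultimately a finite computation on $Q_6$, the honest proof is simply to exhibit this computation, and the ``hard part'' is choosing the boundary-vs-matching bookkeeping tight enough that the finite sum actually comes out small (which it does, with substantial margin, because $(0.4153)^{32}\varphi$ is so minuscule — we only need to beat roughly $1 - 10^{-11}$).
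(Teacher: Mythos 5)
There is a genuine gap, and it is in the overall strategy rather than in any single estimate. Your plan is to show that every model in $\cD_{\ge 0.5847}(Q_6)$ is connected with probability extremely close to $1$, by a union bound over cuts together with matchings inside each cut and isoperimetry. But that statement is false, so no bookkeeping can rescue it. Consider the 1-independent model in which each vertex of $Q_6$ is independently declared ``on'' with probability $\theta=\sqrt{0.5847}\approx 0.7647$ and an edge is open iff both endpoints are on: every edge is open with probability exactly $0.5847$, yet a fixed vertex is isolated with probability $1-\theta\approx 0.235$, and the graph is connected with probability at most $\theta^{64}\approx 3.5\times 10^{-8}$. So the disconnection probability can genuinely be $1-3.5\times10^{-8}$, and your target inequality (union-bound sum $<1-(0.4153)^{32}\varphi$) cannot be reached by summing per-cut bounds: already for the $64$ singleton cuts, 1-independence imposes no joint constraint on the six edges meeting at one vertex (they pairwise share that vertex, so your matching has size $1$), and the best admissible per-cut bound is about $0.235$ per vertex, giving a union-bound contribution of order $15$ (with your $(0.4153)^{m/11}$ bound, about $40$). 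The difficulty you identified in the ``bulk'' regime is therefore not the real obstruction; the method fails at the very first terms, and your concluding claim that the finite sum ``comes out small with substantial margin'' is incorrect. The content of the lemma is only a uniform \emph{positive} lower bound (of order $10^{-12}$) on the connectivity probability, and since the event is this unlikely under some admissible models, one must lower-bound the probability of the good event directly rather than upper-bound the bad event.

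For comparison, the paper proves the lemma by renormalising $Q_6$ to $Q_3$ (a vertex of $Q_3$ is open iff its $Q_3$-subcube is connected, an edge is open iff some one of the eight crossing edges is open), so that connectivity of $Q_6$ follows if all eight renormalised sites are open and the renormalised bonds connect them. The sites are independent, and conditioned on all sites being open the bonds form a 1-independent model with edge probability at least $1-0.4153^8/P_0^2$. The needed lower bounds on connection probabilities for 1-independent models on $Q_3$ are obtained by solving a linear programming relaxation of the 1-independence constraints (variables $x_S$, $y_S$ over edge sets $S\subseteq E(Q_3)$, with $y_{S\cup\{e\}}=p\,y_S$ for $e$ vertex-disjoint from $S$), yielding $P_0=0.0463$ at $p=0.5847$ and $P_1=0.0497$ at $p=0.5872$, whence $\Prb(Q_6\text{ connected})\ge P_0^8P_1\ge 10^{-12}>0.4153^{32}\varphi$. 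Note that these LP values are themselves small constants, not numbers near $1$, which is exactly why a ``failure is unlikely'' argument of the kind you propose cannot work.
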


Together, these results give Theorem~\ref{thm:zn-limit}. The approach we use to prove Theorem~\ref{thm:lattice} also yields the following result about the critical probability for the asymptotically almost sure existence of a giant component in the hypercube graph $Q_n$ under 1-independent bond percolation models, which may be of independent interest.

\begin{theorem}\label{thm:cube}
 Let $p\in(0,1]$ and\/ $k\in\N$ be constants and let $P>(1-p)^{2^{k-1}}\varphi$, where $\varphi$ is the golden ratio. Suppose that for any model in $\cD_{\ge p}(Q_k)$, the probability that the graph is connected is at least~$P$. Then there exists $C>0$ such that for $n\in\N$, and any model in $\cD_{\ge p}(Q_n)$, there is a component containing at least $C\cdot 2^n$ vertices with probability at least $1-e^{-\Omega(n^2)}$.
\end{theorem}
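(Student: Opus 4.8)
The plan is to run the renormalisation behind Theorem~\ref{thm:lattice} at the level of the hypercube, now tracking the size of the component produced rather than its infiniteness and extracting a concentration estimate. Write $Q_n=Q_k\times Q_{n-k}$, so that $Q_n$ splits into $2^{n-k}$ vertex-disjoint copies $\{C_w:w\in\{0,1\}^{n-k}\}$ of $Q_k$, with a perfect matching $M_{ww'}$ of $2^k$ edges along each edge $ww'$ of the index cube $Q_{n-k}$. Call an edge $ww'$ of $Q_{n-k}$ \emph{good} if $C_w$ and $C_{w'}$ are both connected in the random subgraph and at least one edge of $M_{ww'}$ is open. If $U\subseteq\{0,1\}^{n-k}$ is the vertex set of a component with $|U|\ge 2$ of the good-edge subgraph of $Q_{n-k}$, then every $C_w$ with $w\in U$ is connected and consecutive ones along good edges are joined in $Q_n$, so $\bigcup_{w\in U}C_w$ is a connected subgraph of $Q_n$ on $|U|\cdot 2^k$ vertices. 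It therefore suffices to show that, for large $n$, the good-edge percolation on $Q_{n-k}$ has a component on a constant fraction of the $2^{n-k}$ vertices with probability $1-e^{-\Omega(n^2)}$.

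Two facts about this renormalised percolation are straightforward, and they are where the hypothesis is used. First, it is $1$-independent on $Q_{n-k}$: if $ww'$ and $vv'$ are non-adjacent edges of $Q_{n-k}$ then $w,w',v,v'$ are distinct and the edge sets $E(C_w)\cup E(C_{w'})\cup M_{ww'}$ and $E(C_v)\cup E(C_{v'})\cup M_{vv'}$ share no vertex of $Q_n$, so the original model's $1$-independence makes the states of the renormalised edges $ww'$ and $vv'$ independent. Second, each renormalised edge is good with probability at least $q_0:=P^2-(1-p)^{2^k}$: the events ``$C_w$ connected'' and ``$C_{w'}$ connected'' depend on the non-adjacent edge sets $E(C_w)$ and $E(C_{w'})$, so they are independent and each has probability at least $P$; and since $M_{ww'}$ is an induced matching, iterating $1$-independence shows that its edge states are mutually independent, whence $\Prb(\text{all of }M_{ww'}\text{ closed})\le(1-p)^{2^k}$. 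The hypothesis $P>(1-p)^{2^{k-1}}\varphi$ is exactly what makes $q_0$ positive, with room to spare: squaring gives $(1-p)^{2^k}<P^2\varphi^{-2}$, so $q_0>P^2(1-\varphi^{-2})=P^2/\varphi$, and equivalently, conditioned on $C_w$ and $C_{w'}$ both being connected, $M_{ww'}$ contains an open edge with probability exceeding $1-\varphi^{-2}=1/\varphi$. This is precisely how $\varphi$ enters, and it is inherited from the proof of Theorem~\ref{thm:lattice}.

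It then remains to prove --- exactly as in the proof of Theorem~\ref{thm:lattice} --- that any $1$-independent bond percolation on $Q_m$ with edge-probability bounded below by a fixed positive constant has, for $m$ large, a component on a constant fraction of its vertices except with probability $e^{-\Omega(m^2)}$; applying this with $m=n-k$ and constant $q_0$, and then unwinding the renormalisation, finishes the proof and produces $C$. The natural route is a second renormalisation: partition $Q_m$ into disjoint sub-blocks, use the mutual independence of the sub-block configurations together with the expansion of $Q_m$ (whose isoperimetric profile improves as $m\to\infty$) to pass to a renormalised model on a lower-dimensional cube whose edge-probability exceeds the constant threshold guaranteeing a giant component, and apply a Chernoff- or Azuma-type bound over the independent sub-blocks to obtain the stated failure probability.

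I expect this last step to be the main obstacle: since $(1-p)^{2^{k-1}}\varphi$ can be tiny (as in Lemma~\ref{lem:q6-p}), $q_0$ may be small, so one cannot just invoke a percolation-threshold result and must argue directly for the $1$-independent model on $Q_m$, using the high dimensionality in an essential way. The subtlety running through the whole argument is the unavoidable correlation between the internal state of a block $C_w$ and the matching edges incident to it: since $M_{ww'}$ is a perfect matching, every internal edge of $C_w$ is adjacent to an edge of $M_{ww'}$, so $1$-independence gives no direct decoupling, and it is the need to handle this without simply conditioning on the blocks --- thereby losing all control of the matchings --- that forces the golden-ratio margin and forms the technical core of both this theorem and Theorem~\ref{thm:lattice}.
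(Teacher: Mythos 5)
Your first renormalisation step is sound and is essentially the paper's first step, but the reduction you make after it contains a genuine gap. You collapse everything into a pure bond model on $Q_{n-k}$ whose edges are open with probability at least $q_0>P^2/\varphi$, and then rest the whole proof on the claim that any 1-independent bond percolation on $Q_m$ with edge probability bounded below by \emph{any} fixed positive constant has a giant component with probability $1-e^{-\Omega(m^2)}$, asserting this is proved ``exactly as in the proof of Theorem~\ref{thm:lattice}''. That claim is false in general and is not proved anywhere in the paper: $P$ (and hence $q_0$) can be astronomically small --- with the values of Lemma~\ref{lem:q6-p} one has $P\approx 10^{-12}$ --- and since $\pgiant\ge\frac12$ (via the construction from Theorem~1.4 of the Balister--Bollob\'as paper), there exist 1-independent models on $Q_m$ with edge probability equal to a small constant, indeed any constant below $\frac12$, having no giant component; even for constants between $\frac12$ and $0.5847$ the statement is open. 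Your sketched fix (a second renormalisation over sub-blocks to ``boost'' the edge probability above a threshold) does not work for a pure bond model: a sub-block being internally connected is itself exponentially unlikely at small constant edge probability, so renormalising lowers rather than raises the relevant probability. In short, the step you yourself flag as the main obstacle is the actual content of the theorem, and the route you propose for it cannot succeed.

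What the paper does instead is to keep the site--bond structure rather than discard it: after the first step one has a model in $\cD(Q_{n-k},s_k,1-r_k)$, where the \emph{vertex} probability $s_k=P$ may be tiny but what is tracked is the \emph{conditional} edge probability. A dimension-halving renormalisation (a vertex of $Q_{n-i-1}$ is the open edge between $v\concat 0$ and $v\concat 1$; an edge is open if one of the two parallel matching edges is open) gives the recursion $1/r_{i+1}=(1/r_i-1)^2$, whose relevant fixed point is $\varphi^2$; the hypothesis $P>(1-p)^{2^{k-1}}\varphi$ says exactly that $1/r_k>\varphi^2$, so $r_i\to 0$. Your single step only yields a conditional edge probability exceeding $1/\varphi\approx 0.618$, which is far below the $18/19$ needed for Proposition~\ref{prop:cube}; it is the iteration that pushes it arbitrarily close to $1$. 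Once $1-r_I>\frac{18}{19}$, Proposition~\ref{prop:cube} --- whose proof (vertex-good and $\eps$-edge-good vertices and pairs, many vertex-disjoint short paths between second neighbourhoods, Wilson's packing theorem, Chernoff bounds and a Brooks-theorem colouring of $Q_n^{(5)}$) is the technical core you would still need --- delivers a component of size $C\cdot 2^{n-I}$ with probability $1-e^{-\Omega(n^2)}$, and unwinding the coupling gives the theorem. So the missing ingredient in your write-up is precisely this pair: the iterated site--bond renormalisation that exploits the golden-ratio margin, and the direct argument for $\cD(Q_m,p_v,p_e)$ with $p_e$ close to $1$ and $p_v$ merely positive.
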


Motivated by the statement of this theorem, let
\[
 \pgiant=\inf\Big\{p:\exists C>0\colon
 \!\lim_{n\to\infty}\inf_{\mu\in\cD_{\ge p}(Q_n)}\!\Prb_\mu(Q_n\text{ has a component of size}\ge C\cdot 2^n)=1\Big\}
\]
be the threshold $p$ for the asymptotically almost sure existence of a giant component in~$Q_n$ under any model in $\cD_{\geq p}(Q_n)$. Then Theorem~\ref{thm:cube}
together with Lemma~\ref{lem:q6-p} imply that
$\pgiant \le 0.5847$. The best known lower bound on $\pgiant$ comes from the simple construction used in the proof of Theorem 1.4 in~\cite{balister2012critical}, which shows that $\pgiant\geq\frac{1}{2}$.

It is interesting to compare the known bounds on these 1-independent threshold probabilities with their analogues in the independent setting. Firstly, the celebrated Harris--Kesten theorem~\cite{harris1960lower,kesten1980critical} determines the threshold for percolation in the independent bond model on $\Z^2$ to be exactly~$1/2$, whereas Result~\ref{res:high-conf} suggests that $\pmax(\Z^2)\geq 0.5921$. Turning to $\pgiant$, Erd\H{o}s and Spencer showed in~$\cite{erdos1979evolution}$ that for each fixed $\eps>0$, the random subgraph of $Q_n$ in which each edge is open independently with probability $p=(1-\eps)/n$ has maximum component size $o(2^n)$ asymptotically almost surely, while Ajtai, Koml\'os, and Szemer\'edi showed in~\cite{ajtai1982largest} that under the same model with $p=(1+\eps)/n$ where $\eps>0$ is fixed, there is a component of size $\Omega(2^n)$. The component structure has since been studied when $\eps\to0$ at different rates, for example in~\cite{bollobas1992evolution, borgs2005random1, borgs2005random2, borgs2006random, hofstad2005asymptotic,hofstad2006expansion} (see also~\cite{hofstad2014unlacing} or Section 13.4 of~\cite{heydenreich2017progress} for a survey). The threshold probability for percolation of the independent model on $\Z^n$ is $(1+o(1))/2n$; see~\cite{heydenreich2017progress} (particularly Section 15.5) for the history of this fact and a survey of more precise results.

After outlining some notation and preliminary results, in Section~\ref{sec:cube} we prove Theorem~\ref{thm:cube} via a series of propositions and lemmas. That section contains many of the key results and ideas used in the proof of Theorem~\ref{thm:lattice}, which is dealt with in Section~\ref{sec:lattice}. The proof of Lemma~\ref{lem:q6-p} is handled in Section~\ref{sec:q3-p}, and Section~\ref{sec:z2-lower-bounds} contains the proof of Theorem~\ref{thm:z2-lower-bound} and the explanation of Result~\ref{res:high-conf}. We prove Theorem~\ref{thm:z2-upper-bound} in Section~\ref{sec:z2-upper-bound}, and finally in Section~\ref{sec:open-probs} we discuss some possible avenues for future enquiry.

\subsection{Notation and preliminaries}
We use standard graph theoretic notation throughout. Although we often work on subgraphs of $Q_n$, the distance between two vertices will always be the Hamming distance, or equivalently, the distance between the vertices in $Q_n$. For a vertex $v$ of a subgraph $G$ of $Q_n$, define the second neighbourhood of $v$ in $G$ to be the set of vertices whose shortest path to $v$ in $G$ has length exactly 2, and write this as $N^2_G(v)$. 

For two tuples $u\in\Z^i$ and $v\in\{0,1\}^j$ where $i,j\in\N$, we will write $u\concat v$ to mean the concatenation of $u$ and $v$; if $j=1$, so that $v=(0)$ or $v=(1)$, then we will write $u\concat 0$ or $u\concat 1$ respectively. For $n\in \N$, we will write $[n]$ for the set $\{1,2,\dots,n\}$.

We now state the Chernoff bound we will use throughout this paper (see~\cite{mitzenmacher2005probability} for a discussion of this result).
\begin{lemma}[Chernoff bound]
    \label{lem:chernoff}
    Let $n\in\N$, let $p\in[0,1]$, and let $X\sim\textnormal{Bin}(n,p)$. Then for all $\eps\in[0,1]$,
    \[\Prb(X\leq (1-\eps)np)\leq e^{-\eps^2np/2}.\]
\end{lemma}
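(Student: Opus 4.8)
The plan is to use the standard exponential-moment (Chernoff) method. Write $X=\sum_{i=1}^n X_i$, where $X_1,\dots,X_n$ are independent $\mathrm{Bernoulli}(p)$ random variables. The boundary cases are immediate and I would dispose of them first: if $p=0$ then $X=0$ almost surely and both sides equal $1$; if $p=1$ then $X=n$ almost surely so the left-hand side is $0$ unless $\eps=0$; and if $\eps=0$ the right-hand side is $1$. So I may assume $p\in(0,1)$ and $\eps\in(0,1]$, and within that I would first treat $\eps\in(0,1)$, recovering $\eps=1$ at the end via the direct estimate $\Prb(X\le 0)=\Prb(X=0)=(1-p)^n\le e^{-np}\le e^{-np/2}$ (or by continuity in $\eps$).

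For any $t>0$, applying Markov's inequality to the nonnegative random variable $e^{-tX}$ gives
\[
 \Prb\big(X\le(1-\eps)np\big)=\Prb\big(e^{-tX}\ge e^{-t(1-\eps)np}\big)\le e^{t(1-\eps)np}\,\E\big[e^{-tX}\big].
\]
By independence $\E[e^{-tX}]=\prod_{i=1}^n\E[e^{-tX_i}]=(1-p+pe^{-t})^n=\big(1+p(e^{-t}-1)\big)^n$, and since $1+x\le e^x$ this is at most $e^{np(e^{-t}-1)}$. Combining the two bounds yields
\[
 \Prb\big(X\le(1-\eps)np\big)\le\exp\!\big(np\,(e^{-t}-1+t(1-\eps))\big).
\]
I would then minimise the exponent over $t>0$: the map $t\mapsto e^{-t}-1+t(1-\eps)$ has derivative $-e^{-t}+(1-\eps)$, so it is minimised at $t=\ln\frac{1}{1-\eps}>0$, where it equals $-\eps-(1-\eps)\ln(1-\eps)$. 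Substituting gives $\Prb(X\le(1-\eps)np)\le\exp\!\big(-np\,(\eps+(1-\eps)\ln(1-\eps))\big)$.

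To finish I need the elementary inequality $\eps+(1-\eps)\ln(1-\eps)\ge\tfrac12\eps^2$ for $\eps\in[0,1)$; equivalently $g(\eps):=(1-\eps)\ln(1-\eps)+\eps-\tfrac12\eps^2\ge0$. Here $g(0)=0$ and $g'(\eps)=-\ln(1-\eps)-\eps\ge0$ on $[0,1)$, since $-\ln(1-\eps)=\sum_{k\ge1}\eps^k/k\ge\eps$; hence $g\ge0$ on $[0,1)$, which gives the claimed bound for $\eps\in(0,1)$, and the $\eps=1$ case was handled above. There is no real obstacle in this argument — it is a routine one-variable calculus computation — so the only points deserving attention are the sign of the optimal $t$ (which is why the Chernoff method works here at all) and the degenerate values of $p$ and $\eps$.
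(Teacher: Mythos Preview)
Your argument is correct and is the standard exponential-moment derivation of the lower-tail Chernoff bound; the handling of the degenerate cases and the elementary inequality $\eps+(1-\eps)\ln(1-\eps)\ge\eps^2/2$ are both fine. The paper does not actually prove this lemma---it simply states it as a known result and points the reader to Mitzenmacher and Upfal---so there is no in-paper proof to compare against, and your treatment is precisely the textbook one.
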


We also use the following simple corollary of Markov's inequality.
\begin{lemma}\label{lem:markov}
    Let $X$ be a random variable taking values in the interval $[0,N]$ for some $N\in(0,\infty)$, and suppose that $\mathbb{E}(X)\geq cN$ for some $c\in(0,1]$. Then for all $\eps\in(0,1)$, \[\Prb\big(X\geq (1-\eps)cN\big)\geq c\eps.\]
\end{lemma}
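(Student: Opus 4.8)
The plan is to use the standard \emph{reverse Markov} trick. Since $X$ is bounded above by $N$, if the probability of $X$ exceeding a given threshold were too small, then $\E(X)$ would be forced to be small, contradicting the hypothesis $\E(X)\ge cN$. So I would set $t=(1-\eps)cN$, write $q=\Prb(X\ge t)$, and split the expectation according to the event $\{X\ge t\}$ and its complement. On $\{X\ge t\}$ I bound $X\le N$, and on the complementary event I bound $X< t$ (using also $X\ge 0$ so that this contribution is at most $t(1-q)$). This gives
\[
 cN\le \E(X)\le Nq+t(1-q)\le Nq+t = Nq+(1-\eps)cN.
\]
Rearranging yields $Nq\ge \eps cN$, and dividing by the positive quantity $N$ gives $q\ge c\eps$, which is exactly the claimed bound.

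There is essentially no obstacle: the argument is a two-line computation. The only minor points to keep track of are that the split of $\E(X)$ uses the two-sided bound $0\le X\le N$ (so that the contribution from $\{X<t\}$ is nonnegative and at most $t$), and that every division is by the strictly positive constant $N$, so no degenerate case arises; the hypotheses $c\in(0,1]$ and $\eps\in(0,1)$ merely ensure the conclusion is non-vacuous.
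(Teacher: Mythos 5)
Your proof is correct and is essentially the same argument as the paper's: the paper applies Markov's inequality to the nonnegative variable $N-X$, which amounts to exactly the expectation split $\E(X)\le Nq+(1-\eps)cN(1-q)$ that you carry out by hand. The only cosmetic difference is that your direct split lands on $q\ge c\eps$ immediately, whereas the paper passes through the intermediate bound $c\eps/(1-(1-\eps)c)\ge c\eps$.
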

\begin{proof}
    Applying Markov's inequality we have
    \begin{align*}
        \Prb\big(X\geq (1-\eps)cN\big)
         & \geq 1-\Prb\big(N-X\geq N-(1-\eps)cN\big) \\
         & \geq 1-\frac{N-\mathbb{E}(X)}{N-(1-\eps)cN}     \\
         & = \frac{\mathbb{E}(X)/N-(1-\eps)c}{1-(1-\eps)c} \\
         & \geq \frac{c\eps}{1-(1-\eps)c}                  \\
         & \geq c\eps.\qedhere
    \end{align*}
\end{proof}

\section{Giant component in \texorpdfstring{$Q_n$}{Q\textunderscore{}n}}\label{sec:cube}
We begin by studying the giant component in the hypercube $Q_n$, and prove Theorem~\ref{thm:cube}. A key step in the proof is a renormalisation argument which reduces the problem to one concerning a more general class of percolation models where both the vertices and edges may be open or closed, but which still maintains that the states of subgraphs induced on disjoint subsets of vertices are independent. In return for allowing vertices to be closed, edges in the renormalised models have a higher probability of being open when both endpoints are open. Formally, we will consider percolation models from the following class.

\begin{definition}\label{def:renormalised}
    For a (possibly infinite) graph $G$, and for $p_v\in(0,1]$ and $p_e\in[0,1]$, define $\cD(G,p_v,p_e)$ to be the family of percolation models on $G$ in which \begin{enumerate}[label=(\roman*)]
        \item vertices are open with probability at least $p_v$;
        \item conditioned on their endpoints being open, edges are open with probability at least $p_e$; and
        \item for all pairs of vertex-disjoint subgraphs $G_1$ and $G_2$ of $G$, the states of the vertices and edges in
        $G_1$ are independent of the states of the vertices and edges in~$G_2$.
    \end{enumerate}
\end{definition}

The main result on such percolation models is the following proposition, proved in Section~\ref{sec:cube-prop}, from which Theorem~\ref{thm:cube} will follow almost immediately after a suitable renormalisation.

\begin{proposition}\label{prop:cube}
    Let $p_v\in (0,1]$ and\/ $p_e\in (\frac{18}{19},1]$ be constants. Then there exists a positive constant $C$ such that for all\/ $n\in\N$, under every model in $\cD(Q_n,p_v,p_e)$, there is a component containing at least $C\cdot 2^n$ vertices with probability at least $1-e^{-\Omega(n^2)}$.
\end{proposition}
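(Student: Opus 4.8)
The plan is to prove Proposition~\ref{prop:cube} by a renormalisation scheme that recursively builds a large component. The key observation is that $Q_{n}$ decomposes into two copies of $Q_{n-1}$ (say $Q_{n-1}\concat 0$ and $Q_{n-1}\concat 1$) joined by a perfect matching of $2^{n-1}$ edges, and that because these two copies are vertex-disjoint, property~(iii) of Definition~\ref{def:renormalised} guarantees that the restrictions of the model to the two halves are independent. So I would set up an induction on $n$: assuming each half already contains a ``large'' component with very high probability, I want to show that the matching between the two halves, restricted to vertices in these two large components, almost surely contains an open edge (in fact many open edges), which then merges them into a single large component in $Q_n$.

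The main steps would be as follows. First, fix $p_v$ and $p_e$, and let $C$ be a small constant to be determined. Define the event $A_n$ that $Q_n$ has a component of size at least $C\cdot 2^n$, and prove by induction that $\Prb_\mu(A_n)\ge 1-e^{-cn^2}$ for an appropriate $c>0$, uniformly over $\mu\in\cD(Q_n,p_v,p_e)$. For the inductive step, condition on the states of the two half-cubes. With probability at least $1-2e^{-c(n-1)^2}$ both halves contain a component of size $\ge C\cdot 2^{n-1}$; call these $H_0$ and $H_1$. Now I need the matching edges between $H_0$ and $H_1$ to include an open one. The subtlety is that $H_0$ and $H_1$ are random sets determined by the states inside the half-cubes, and a priori the matching partner of a vertex in $H_0$ need not lie in $H_1$. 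To handle this I would use an averaging/counting argument: there are many (at least of order $2^n$) candidate components of the right size, or alternatively I would show that any two sets of size $\ge C\cdot 2^{n-1}$ in $Q_{n-1}\concat 0$ and $Q_{n-1}\concat 1$ must have matching-partners overlapping in $\Omega(2^n)$ vertices — this is a purely combinatorial fact about the hypercube matching (each vertex has a unique partner, so $|M(H_0)\cap H_1|\ge |H_0|+|H_1|-2^{n-1}\ge (2C-1)2^{n-1}$, which is useless unless $C>1/2$). Since $C$ is small this direct bound fails, so instead I expect the right approach is to run the renormalisation not on single cubes but to iterate over several ``scales'' at once, or to track not just one large component but enough structure (e.g.\ that a constant fraction of all vertices lies in large components, or that the largest component is ``spread out''), so that the matching is very likely to connect the two largest components. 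This is where I expect to invoke $p_e>18/19$: the constant is presumably tuned so that a first-moment or union-bound computation over the relevant bad events closes.

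Concretely, I would strengthen the inductive hypothesis to something like: with probability $\ge 1-e^{-cn^2}$, $Q_n$ has a component $K_n$ with $|K_n|\ge C\cdot 2^n$ \emph{and} $K_n$ meets at least a $\delta$-fraction of the vertices in ``most'' sub-cubes of a fixed smaller dimension $d$. Then when splitting $Q_n$ into its two halves, the large components $K_0\subseteq Q_{n-1}\concat 0$ and $K_1\subseteq Q_{n-1}\concat 1$ each hit a $\delta$-fraction of vertices in many aligned $d$-dimensional sub-cubes; pairing up sub-cubes via the matching, in each such pair there are $\Omega(2^d)$ vertices of $K_0$ whose matching partners lie in the corresponding sub-cube, and a positive-probability-bounded-below chunk of these partners lie in $K_1$. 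Conditioned on the half-cube states, each of these order-$2^d$ potential connecting edges is open independently with probability $\ge p_e$ (by property~(iii), edges across the matching are independent of everything inside the halves, and of each other since they form a matching), so the probability that none is open is at most $(1-p_e)^{\Omega(2^d)}$, which we can drive below $e^{-cn^2}$ by taking $d\approx \log_2(n^2)$, i.e.\ $d$ slowly growing. The Chernoff bound (Lemma~\ref{lem:chernoff}) handles the fluctuations in how many partners land in $K_1$, and a union bound over the $O(2^{n-d})$ sub-cube pairs and the two halves' failure probabilities completes the step, provided the constants $C,\delta,c$ are chosen consistently — this bookkeeping, together with verifying the strengthened hypothesis is genuinely maintained (the new merged component $K_0\cup(\text{matching edges})\cup K_1$ must still be suitably spread out), is the main obstacle and the part requiring the most care. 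The threshold $p_e>18/19$ should emerge from the requirement that the spreading-out property survives the merge across the single matching used to go from dimension $n-1$ to $n$.
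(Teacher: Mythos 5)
Your proposal has a genuine structural gap: the induction cannot close. Write $f(m)$ for the supremum over models in $\cD(Q_m,p_v,p_e)$ of the probability that $Q_m$ has no component of size $C\cdot 2^m$. Your inductive step (both halves succeed, then merge) only yields $f(n)\le 2f(n-1)+(\text{merge failure})$, and this is incompatible with the target $f(n)\le e^{-cn^2}$ already for a single step, since $2e^{-c(n-1)^2}=2e^{c(2n-1)}e^{-cn^2}\gg e^{-cn^2}$. Worse, unravelling the recursion from any fixed base dimension $d_0$ gives $f(n)\le 2^{\,n-d_0}f(d_0)+\cdots$, which is vacuous because $f(d_0)$ is a fixed positive constant when $p_v<1$ (a fixed small cube fails with constant probability). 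No strengthening of the event $A_n$ (spread-outness, multiple scales of subcubes) repairs this accounting: the weak link is not the merge but the requirement that both halves, and hence recursively essentially all bounded-size subcubes, succeed, whereas with probability $1-o(1)$ a constant fraction of them fail. Any proof must extract the $e^{-\Omega(n^2)}$ from concentration over many independent \emph{local} events at a single scale rather than from a two-halves recursion. This is what the paper does: it defines ``good'' events for vertices depending only on states within Hamming distance $2$ (inside the half-cube of the vertex), so that vertices pairwise at distance at least $5$ are good independently; it shows any two nearby vertex-good vertices are linked by $\Omega(n^2)$ vertex-disjoint paths of length at most $15$, giving per-pair failure $e^{-\Omega(n^2)}$ which beats the $4^n$ union bound; and it gets a constant fraction of good vertices via Chernoff within colour classes of the fifth power of $Q_n$. (Incidentally, $18/19$ arises there from requiring $(1-\eps)\bigl(2(2p_e-1)+(15p_e-14)\bigr)>2$, not from preserving a spreading property.)

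There is a second concrete gap in the merge step itself. You assert that, conditioned on the half-cube states, each candidate matching edge is open independently with probability at least $p_e$ ``by property (iii)''. But a matching edge $u\concat 0$--$u\concat 1$ shares its endpoints with edges inside both halves, so it is not vertex-disjoint from the configuration you have revealed (you must reveal the halves' edges to know $K_0$ and $K_1$), and Definition~\ref{def:renormalised} gives no lower bound on its conditional probability given that information; a model in the class may correlate each matching edge adversely with the internal edges at its endpoints. The paper's argument is arranged precisely to avoid this: it conditions only on the vertex states $H$ (legitimate, since all other vertices form a subgraph vertex-disjoint from a given edge together with its endpoints), bounds each short path's conditional openness by $15p_e-14$ via a union bound, and uses independence only across vertex-disjoint paths. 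So even with the failure-probability accounting repaired, your connecting step would need to be reformulated to condition only on vertex states or on endpoint openness.
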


In the next section we will prove Theorem~\ref{thm:cube} assuming Proposition~\ref{prop:cube}, then in Section~\ref{sec:cube-prop} we will return to prove the proposition via a series of lemmas. In order to apply Theorem~\ref{thm:cube} we will make use of Lemma~\ref{lem:q6-p}, which shows that the conditions of Theorem~\ref{thm:cube} hold when $p=0.5847$, and which is proved in Section~\ref{sec:q3-p}.

\subsection{Proof of Theorem~\ref{thm:cube}}\label{sec:cube-thm}

We now give the proof of Theorem~\ref{thm:cube} assuming Proposition~\ref{prop:cube} holds.

\begin{proof}[Proof of Theorem~\ref{thm:cube}]
Let $n\geq k$ be an integer. For a given 1-independent bond percolation model $\mu_n$ on $Q_n$ in which each edge is open with probability $p$, we will inductively define coupled percolation models $\mu_{n-k},\dots,\mu_1$ on $Q_{n-k},\dots,Q_{1}$ respectively, starting with $\mu_{n-k}$. We say a vertex $v \in Q_{n-k}$ is open under $\mu_{n-k}$ if the hypercube $H_v = \{v\concat a:a\in\{0,1\}^k\} \subseteq Q_n$ is connected under $\mu_n$. An edge of $Q_{n-k}$ between two open vertices $u$ and $v$ is chosen to be open under $\mu_{n-k}$ if at least one of the edges between the two hypercubes $H_u$ and $H_v$ is open under $\mu_n$ (so that the $(k+1)$-dimensional cube $H_u\cup H_v$ is connected). For $i\in \{k,\dots,n-2\}$, define $\mu_{n-i-1}$ to be the model on $Q_{n-i-1}$ in which a vertex $v$ is open if the edge between the (open) vertices $v\concat0$ and $v\concat1$ (in $Q_{n-i}$) is open under $\mu_{n-i}$, and an edge $uv$ between two open vertices is open if the edge between $u\concat0$ and $v\concat0$ or the edge between $u\concat1$ and $v\concat1$ is open.

For each $i\geq k$, define $q_i=(1-p)^{2^i}$, so that $q_{i+1}=q_i^2$ for these $i$. Let $s_k=P$, and for $i\geq k$, let $s_{i+1}=s_i^2-q_i$. Note that $s_i^2>q_i\varphi^2>0$ for all $i\geq k$, where $\varphi$ is the golden ratio. Indeed, $s_k^2=P^2>q_k\varphi^2$ by assumption, and if $s_{i-1}^2>q_{i-1}\varphi^2$ for $i\geq k+1$, then $s_i^2=(s_{i-1}^2-q_{i-1})^2>q_{i-1}^2(\varphi^2-1)^2=q_i\varphi^2$. Thus, for all $i\geq k$, we can define $r_i=q_i/s_i^2\in[0,1]$.

\begin{claim}\label{claim:renorm}
 For all\/ $k\leq i\leq n-1$, we have $\mu_{n-i}\in\cD(Q_{n-i},s_i,1-r_i)$.
\end{claim}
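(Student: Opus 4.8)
The plan is to prove this by induction on $i$, starting from the base case $i=k$ and then handling the single step $i\to i+1$ twice over—once for the ``hypercube-doubling'' renormalisation ($i=k$, where we merge two $k$-cubes into a $(k{+}1)$-cube) and once for the generic ``coordinate-collapsing'' renormalisation ($i>k$, where we pair up $v\concat 0$ and $v\concat 1$). Actually these are essentially the same argument, so I would phrase a single combined inductive step and just note the base case separately.

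\emph{Base case $i=k$.} Here $\mu_{n-k}$ is defined on $Q_{n-k}$ by declaring $v$ open iff $H_v$ is connected under $\mu_n$, which by hypothesis (applied to the 1-independent model $\mu_n$ restricted to the copy $H_v$ of $Q_k$) happens with probability at least $P=s_k$, giving property~(i). For property~(ii), I would condition on $u,v$ being open, i.e.\ on $H_u$ and $H_v$ each being connected; then $H_u\cup H_v$ fails to be connected under $\mu_n$ only if all $2^{k}$ edges between $H_u$ and $H_v$ are closed, and by 1-independence (these $2^k$ edges form a set vertex-disjoint from... no—they share vertices with $H_u,H_v$; instead condition and use that the state of this edge-set, given the two cubes are connected, still has each edge closed with probability at most $1-p$, and the relevant events are...). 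The clean way: the event ``$H_u$ connected'' depends only on edges inside $H_u$, ``$H_v$ connected'' only on edges inside $H_v$, and the $2^k$ crossing edges are vertex-disjoint from neither, so I instead argue that conditioned on any fixed configuration making $H_u,H_v$ connected, the crossing edges are jointly independent of that configuration is false; rather I use: $\Prb(\text{no crossing edge open}\mid H_u,H_v\text{ conn.}) \le \Prb(\text{no crossing edge open})/\Prb(H_u,H_v\text{ conn.})$ is the wrong direction too. The correct and intended estimate is simply $\Prb(\text{some crossing edge open}\mid H_u,H_v\text{ conn.}) \ge 1 - (1-p)^{2^k}/1 $... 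I will make this rigorous using 1-independence to write the crossing edges as (nearly) independent of the interiors: each crossing edge is vertex-disjoint from all edges of $H_u$ except the one at its $H_u$-endpoint, so the dependence is limited, and a short union-bound / FKG-free argument gives $\Prb(\text{all }2^k\text{ crossing edges closed}) \le (1-p)^{2^k} = q_k$ unconditionally, whence $1-r_k = 1 - q_k/s_k^2$ is a valid lower bound for the conditional open-probability after dividing by $\Prb(H_u,H_v\text{ conn.})\ge s_k^2$. Property~(iii) for $\mu_{n-k}$ is inherited from 1-independence of $\mu_n$: vertex-disjoint subgraphs of $Q_{n-k}$ lift to vertex-disjoint (indeed, edge-set-disjoint with no shared vertices) subgraphs of $Q_n$.

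\emph{Inductive step $i\to i+1$ for $i\ge k$.} Assume $\mu_{n-i}\in\cD(Q_{n-i},s_i,1-r_i)$. A vertex $v\in Q_{n-i-1}$ is open under $\mu_{n-i-1}$ iff $v\concat 0,v\concat 1$ are both open in $\mu_{n-i}$ \emph{and} the edge between them is open; by properties (i) and (ii) of $\mu_{n-i}$ this has probability at least $s_i\cdot s_i\cdot(1-r_i)=s_i^2(1-q_i/s_i^2)=s_i^2-q_i=s_{i+1}$, so (i) holds with $p_v=s_{i+1}$. For (ii), condition on $u,v$ open in $\mu_{n-i-1}$; then the edge $uv$ is open in $\mu_{n-i-1}$ unless \emph{both} the edge $(u\concat0)(v\concat0)$ and the edge $(u\concat1)(v\concat1)$ are closed in $\mu_{n-i}$, each of which (given its endpoints open, which is implied by $u,v$ open) is closed with probability at most $r_i$; since these two edges are vertex-disjoint, property (iii) of $\mu_{n-i}$ makes them independent, so both are closed with probability at most $r_i^2=q_i^2/s_i^4=q_{i+1}/s_{i+1}^2\cdot(s_{i+1}^2/s_i^4)$—wait, more directly: the unconditional probability that edge $uv$ is \emph{not} open while $u,v$ are open is at most $r_i^2\cdot(\text{something})$; dividing the relevant probabilities gives conditional closed-probability at most $r_i^2\,s_i^4/s_{i+1}^2\cdot$...; the intended bookkeeping is exactly $q_{i+1}=q_i^2$, $s_{i+1}=s_i^2-q_i$, $r_{i+1}=q_{i+1}/s_{i+1}^2$, and one checks $r_i^2 \le r_{i+1}$ after accounting for the conditioning, giving $p_e\ge 1-r_{i+1}$. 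Property (iii) is again immediate from that of $\mu_{n-i}$.

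\emph{Main obstacle.} The genuinely delicate point is property~(ii): the crossing edges (base case) and the two parallel lifted edges (inductive step) are \emph{not} vertex-disjoint from the subgraphs whose connectivity defines ``vertex open'', so 1-independence does not directly decouple them from the conditioning event. Making the bound $p_e \ge 1-r_i$ rigorous requires carefully turning conditional probabilities into ratios of unconditional ones and invoking 1-independence only on genuinely vertex-disjoint pieces (the interiors $H_u$, $H_v$, and each individual crossing edge's ``far'' structure). I expect this ratio bookkeeping—verifying that the recursively defined $r_i$ and $s_i$ exactly absorb the conditioning loss $1/\Prb(u,v\text{ open})\le 1/s_i^2$—to be the step that needs the most care, though the earlier inequality $s_i^2>q_i\varphi^2$ established just before the claim is precisely what guarantees $r_i\in[0,1]$ throughout so the statement even makes sense.
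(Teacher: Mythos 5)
Your overall strategy is the paper's: induction on $i$, the recursions $q_{i+1}=q_i^2$, $s_{i+1}=s_i^2-q_i$, $r_i=q_i/s_i^2$, and a ratio-of-probabilities argument for the conditional edge bound. The base case does come out correctly, but note that the inequality you dismissed as ``the wrong direction'', namely $\Prb(\text{all crossing edges closed}\mid u,v\text{ open})\le\Prb(\text{all crossing edges closed})/\Prb(u,v\text{ open})\le q_k/s_k^2=r_k$, is both valid and exactly what is needed (it is how the paper argues, written as $\Prb(\{u,v\text{ open}\}\cap\{uv\text{ open}\})\ge\Prb(u,v\text{ open})-\Prb(A)$). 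Also, the unconditional bound $\Prb(A)\le(1-p)^{2^k}$ requires no union bound or FKG-type input: the $2^k$ crossing edges form a perfect matching between $H_u$ and $H_v$, so they are pairwise vertex-disjoint and hence mutually independent by 1-independence of $\mu_n$; their relation to the interiors of $H_u,H_v$ is irrelevant for the unconditional estimate.

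The genuine gap is in the inductive step for property (ii). Your claim that, conditioned on ``$u,v$ open'' in $\mu_{n-i-1}$, each of $u_0v_0$, $u_1v_1$ is closed with probability at most $r_i$ is unjustified: that conditioning event includes the edges $u_0u_1$ and $v_0v_1$ being \emph{open}, and these share vertices with $u_0v_0$ and $u_1v_1$, so the hypothesis ``closed with probability at most $r_i$ given its endpoints open'' does not transfer. Your fallback (``divide by $\Prb(u,v\text{ open})\le 1/s_i^2$'', ``one checks $r_i^2\le r_{i+1}$ after accounting for the conditioning'') is never carried out and, as sketched, does not close: the unconditional probability that $u_0v_0,u_1v_1$ are closed is not controlled by $q$-type quantities (it includes configurations with closed endpoints), and the true conditioning cost is a factor $(1-r_i)^{-2}$, not $s_i^{-2}$. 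The missing computation is to condition only on the vertex states $B=\{u_0,u_1,v_0,v_1\text{ open}\}$ and write $\Prb(uv\text{ open}\mid u,v\text{ open})$ as the ratio $\Prb(\{u_0u_1,v_0v_1\text{ open}\}\cap\{u_0v_0\text{ or }u_1v_1\text{ open}\}\mid B)/\Prb(u_0u_1,v_0v_1\text{ open}\mid B)$; bound the numerator below by $\Prb(u_0u_1,v_0v_1\text{ open}\mid B)-\Prb(u_0v_0,u_1v_1\text{ closed}\mid B)$, then apply property (iii) of $\mu_{n-i}$ to the vertex-disjoint pairs $\{u_0,v_0\},\{u_1,v_1\}$ (giving $\Prb(u_0v_0,u_1v_1\text{ closed}\mid B)\le r_i^2$) and to $\{u_0,u_1\},\{v_0,v_1\}$ (giving $\Prb(u_0u_1,v_0v_1\text{ open}\mid B)\ge(1-r_i)^2$). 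Since $r_i^2/(1-r_i)^2=q_i^2/(s_i^2-q_i)^2=q_{i+1}/s_{i+1}^2=r_{i+1}$, this yields exactly $p_e\ge 1-r_{i+1}$; in particular the slack you hoped for in ``$r_i^2\le r_{i+1}$'' is consumed precisely by the conditioning loss, so no lossier bookkeeping can substitute for this step.
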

\begin{proof}
We prove the claim by induction on~$i$, considering first the case $i=k$. Since $\mu_n$ is a 1-independent bond percolation model, in $\mu_{n-k}$ vertices are open independently, and they are each open with probability at least $s_k=P$ by assumption. Let $uv$ be an edge of $Q_{n-k}$ and consider the hypercubes in $Q_n$ corresponding to $u$ and~$v$. Write $A$ for the event that all the edges in $Q_n$ between these hypercubes are closed. Since all these edges are open independently, the probability of $A$ occurring is $q_k$. Thus we have
\begin{align*}
 \Prb(uv \text{ open } \mid u,v \text{ open})
 &=\Prb(\{u, v \text{ open}\}\cap\{uv \text{ open}\} )/\Prb(u,v \text{ open}) \\
 &\geq (\Prb(u,v\text{ open})- \Prb(A))/\Prb(u,v \text{ open}) \\
 &\geq 1-q_k/P^2\\
 &=1-r_k.
\end{align*}

Finally, if $G_1$ and $G_2$ are vertex-disjoint subgraphs of $Q_{n-k}$, then the states of the vertices and edges in each are determined by the states of the edges in two vertex-disjoint subgraphs of~$Q_n$. Since $\mu_n$ is a 1-independent model, the states of the vertices and edges in $G_1$ and $G_2$ are independent of each other and hence $\mu_{n-k}\in\cD(Q_{n-k},s_k,1-r_k)$.

Now let $k+1\leq i\leq n-1$ and assume that $\mu_{n-i+1}\in\cD(Q_{n-i+1},s_{i-1},1-r_{i-1})$. This means each vertex in $Q_{n-i+1}$ is open independently with probability at least~$s_{i-1}$, and given that two endpoints of an edge of $Q_{n-i+1}$ are open, the edge itself is open with probability at least $1-r_{i-1}$. It follows that the probability that a vertex of $Q_{n-i}$ is open is at least $(1-r_{i-1})s_{i-1}^2=s_i$. If $G_1$ and $G_2$ are vertex-disjoint subgraphs of $Q_{n-i}$, then the states of the vertices and edges in each are determined by the states of the vertices and edges in two vertex-disjoint subgraphs of $Q_{n-i+1}$. Since $\mu_{n-i+1}\in\cD(Q_{n-i+1},s_{i-1},1-r_{i-1})$, we deduce that the states of the edges and vertices in $G_1$ and $G_2$ are independent of each other.

It just remains to check that an edge $uv$ of $Q_{n-i}$ is present with probability at least $1 - r_i$ given that $u$ and $v$ are open. Let $u_b=u\concat b$ and $v_b=v\concat b$ for each $b\in\{0,1\}$. The edge $uv$ is open in $Q_{n-i}$ exactly when the edges $u_0u_1$, $v_0v_1$, and at least one of $u_0v_0$ and $u_1v_1$ are open in $Q_{n-i+1}$. Let $B$ be the event that $u_0$, $u_1$, $v_0$ and $v_1$ are all open in $Q_{n-i+1}$. Then
\begin{align*}
 \Prb(uv \text{ open} \mid &\ u,v \text{ open})
  =\Prb(\{uv \text{ open}\}\cap\{ u, v \text{ open}\})/\Prb(u,v \text{ open})\\
 &= \Prb(\{u_0u_1, v_0v_1 \text{ open}\}\cap \{u_0v_0\text{ or } u_1v_1 \text{ open}\})/\Prb(u_0u_1, v_0v_1 \text{ open})\\
 & = \Prb(\{u_0u_1, v_0v_1 \text{ open}\}\cap \{u_0v_0\text{ or } u_1v_1 \text{ open}\}\mid B)/\Prb(u_0u_1, v_0v_1 \text{ open} \mid B)\\
 & \geq \frac{\Prb(u_0u_1, v_0v_1 \text{ open}\mid B )-\Prb(u_0v_0, u_1v_1 \text{ closed}\mid B)}{\Prb(u_0u_1, v_0v_1 \text{ open} \mid B )}\\
 & \geq 1 - \frac{r_{i-1}^2}{(1-r_{i-1})^2}\\
 & = 1 - \frac{q_{i-1}^2/s_{i-1}^4}{(s_{i-1}^2-q_{i-1})^2/s_{i-1}^4}\\
 & =1-r_i.
\end{align*}
Hence, $\mu_{n-i}\in\cD(Q_{n-i},s_{i},1-r_{i})$ as required.
\end{proof}

\begin{claim}
 We have $r_i\to 0$ as $i\to \infty$.
\end{claim}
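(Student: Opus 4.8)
The plan is to convert the definitions of $q_i$, $s_i$ and $r_i=q_i/s_i^2$ into a single self-contained recursion for the $r_i$ and then show this recursion forces $r_i\to0$. Directly from the definitions (this is the same computation that appears in the proof of Claim~\ref{claim:renorm}), for every $i\ge k$ we have
\[
 r_{i+1}=\frac{q_{i+1}}{s_{i+1}^2}=\frac{q_i^2}{(s_i^2-q_i)^2}=\Bigl(\frac{r_i}{1-r_i}\Bigr)^2 .
\]
The only other input is the bound $r_i<\varphi^{-2}$ for all $i\ge k$, which is immediate from the inequality $s_i^2>q_i\varphi^2$ recorded just before Claim~\ref{claim:renorm}.

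Next I would feed this bound into the recursion. Using the golden ratio identity $\varphi^2=\varphi+1$, equivalently $1-\varphi^{-2}=\varphi^{-1}$, the estimate $r_i<\varphi^{-2}$ gives $1-r_i>\varphi^{-1}$ and hence $(1-r_i)^2>\varphi^{-2}$, so
\[
 r_{i+1}=\frac{r_i^2}{(1-r_i)^2}\le\varphi^2 r_i^2\qquad(i\ge k).
\]
Writing $u_i:=\varphi^2 r_i$, this says $u_{i+1}\le u_i^2$ for all $i\ge k$, while $u_k=\varphi^2 q_k/P^2<1$ because the hypothesis $P>(1-p)^{2^{k-1}}\varphi$ is exactly $P^2>q_k\varphi^2$. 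A one-line induction then gives $u_{k+j}\le u_k^{2^j}$ for every $j\ge0$, and since $u_k\in[0,1)$ the right-hand side tends to $0$; therefore $u_i\to0$ and $r_i=\varphi^{-2}u_i\to0$ as required. (When $p=1$ everything is trivial since then $q_i=0$ and hence $r_i=0$ for all $i\ge k$.)

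I do not anticipate a genuine obstacle: both ingredients — the recursion and the bound $r_i<\varphi^{-2}$ — are already in hand, and the argument is just the short calculation above. The one point to be slightly careful about is that monotone convergence of $(r_i)$ on its own would still leave one to exclude the nonzero fixed point $\varphi^{-2}$ of $x\mapsto(x/(1-x))^2$; it is cleaner to read off the outright (doubly exponential) decay from the inequality $u_{i+1}\le u_i^2$. This is precisely where the golden ratio does its work — $\varphi^{-2}$ is the largest value from which iterating $x\mapsto(x/(1-x))^2$ collapses to $0$ — which is also why $P>(1-p)^{2^{k-1}}\varphi$ is the natural hypothesis.
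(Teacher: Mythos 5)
Your proof is correct and takes essentially the same route as the paper: both rest on the recursion $r_{i+1}=\bigl(r_i/(1-r_i)\bigr)^2$ and on the fact that $r_k$ starts strictly below the fixed point $\varphi^{-2}$, which is exactly the hypothesis $P>(1-p)^{2^{k-1}}\varphi$. The only difference is bookkeeping: the paper tracks the additive excess of $1/r_i$ over $\varphi^2$ and shows it grows by a factor $2\varphi>1$ at each step, whereas you normalise $u_i=\varphi^2 r_i$ (using the invariant $s_i^2>q_i\varphi^2$ recorded just before the claim) and obtain the slightly stronger doubly exponential bound $u_{k+j}\le u_k^{2^j}$.
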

\begin{proof}
For $i\ge k$, we have
\[
    \frac{1}{r_{i+1}}=\frac{s_{i+1}^2}{q_{i+1}}
    =\left(\frac{s_i^2-q_i}{q_i}\right)^2
    =\left(\frac{1}{r_i}-1\right)^2.
\]
If $1/r_i=\varphi^2+\eps$ for some
$\eps>0$, then
\[
    \frac{1}{r_{i+1}}=(\varphi^2-1+\eps)^2
    =(\varphi+\eps)^2\ge \varphi^2+2\varphi\eps.
\]
In particular, this can be iterated to obtain
\[
 \frac{1}{r_{i+j}} \geq \varphi^2 + (2\varphi)^j \eps.
\]
By assumption $1/r_k=P^2/(1-p)^{2^k} = \varphi^2 + \eps$ for some $\eps > 0$ and, as $2 \varphi >  1$, we have $r_i\to 0$ as claimed.
\end{proof}

Let $I$ be a constant which is large enough so that $r_I<\frac{1}{19}$. Then for $n>I$, we have $\mu_{n-I}\in~\cD(Q_{n-I},s_I,1-r_I)$ where $s_I>0$ and $1-r_I>\frac{18}{19}$, so by Proposition~\ref{prop:cube} there exists a constant $C$ (independent of $n$ and~$\mu_n$) such that, under $\mu_{n-I}$, there is a component in $Q_{n-I}$ containing at least $C\cdot2^{n-I}$ vertices with probability at least $1-e^{-\Omega(n^2)}$. By the construction of $\mu_{n-k},\dots,\mu_{n-I}$, the existence of a component of size $s$ in $Q_{n-I}$ under $\mu_{n-I}$ implies the existence of a component of size at least $s\cdot2^I$ in $Q_n$ under $\mu_n$. It follows that with probability at least $1-e^{-\Omega(n^2)}$, the random subgraph of $Q_n$ under $\mu_n$ has a component containing at least $C\cdot2^n$ vertices, which completes the proof of the theorem.
\end{proof}

\subsection{Proof of Proposition~\ref{prop:cube}}\label{sec:cube-prop}

In this section we will prove Proposition~\ref{prop:cube} via a series of lemmas. Throughout this section, when considering a model in $\cD(Q_n,p_v,p_e)$ for some $n\in \N$ and $p_v,p_e\in[0,1]$ we will write $G$ for the associated random subgraph of~$Q_n$, and $H$ for the random subgraph of $Q_n$ induced by the open vertices (thus $G$ is the subgraph of $H$ obtained by deleting closed edges).
For any vertex $v$ of $Q_n$, we will denote by $H_v$
the subgraph of $H$ induced by vertices with the same first coordinate as~$v$. For each $n\in\N$, order the vertices of $Q_n$ deterministically, for example using the lexicographic ordering. We will assume that subsets of $V(Q_n)$ inherit this ordering. In particular, for $v\in V(Q_n)$ we will often work with the first $\ceil{p_v^2 n^2/4}$ vertices in $N_{H_v}^2(v)$ when using the fixed global ordering, and we will denote this set by $I(v)$ provided it exists.  We make the following definitions.

\begin{definition}
Let $n\in \N$, $p_v\in (0,1]$, $p_e\in(0,1]$, and $\eps\in(0,1)$. Assume we are given a percolation model in $\cD(Q_n,p_v,p_e)$.
\begin{enumerate}[label=(\roman*)]
 \item A vertex $v$ of $Q_n$ is \emph{vertex-good} if it is open and $|N^2_{H_v}(v)|\geq p_v^2n^2/4$. In this case we denote by $I(v)$ the first $\ceil{p_v^2n^2/4}$ vertices of $N^2_{H_v}(v)$.
 \item A vertex $v$ of $Q_n$ is \emph{$\eps$-edge-good} if it is vertex-good and at least $(1-\eps)(2p_e-1)p_v^2n^2/4$ vertices of $I(v)$ are joined to $v$ by a path of length 2 in~$G$.
 \item An unordered pair $\{u,v\}$ of distinct vertices of $Q_n$ is \emph{$\eps$-vertex-good} if $u$ and $v$ are vertex-good and there are at least $(1-\eps)p_v^2n^2/4$ vertex-disjoint paths in~$H$, each of length at most 15, between $I(u)$ and $I(v)$.
 \item An unordered pair $\{u,v\}$ of distinct vertices of $Q_n$ is \emph{$\eps$-edge-good} if $u$ and $v$ are vertex-good and there are at least $(1-\eps)(15p_e-14)p_v^2n^2/4$ vertex-disjoint paths in~$G$ between $I(u)$ and $I(v)$.
\end{enumerate}
\end{definition}

Observe that parts \emph{(ii)} and \emph{(iv)} of this definition are only substantive when $p_e>1/2$ and $p_e>14/15$ respectively, and note that whether or not a vertex $v$ is vertex-good or a pair of vertices $\{u,v\}$ is $\eps$-vertex-good is a property of the sites only.  Also, whether or not $v$ is vertex-good or $\eps$-edge-good depends only on the sites and bonds in the copy of $Q_{n-1}$ inside $Q_n$ defined by vertices with the same first coordinate as~$v$.

We will prove Proposition~\ref{prop:cube} by first showing that for all $\eps\in(0,1)$, with probability at least $1-e^{-\Omega(n^2)}$, every pair of distinct vertex-good vertices within distance 9 of one another form an $\eps$-edge-good pair. We then show that there exists $\eps$ such that if this event occurs, then any two $\eps$-edge-good vertices at distance at most 9 from one another are in the same component in~$G$. Next, by showing that for all $\eps\in (0,1)$, with probability at least $1-e^{-\Omega(n^2)}$, every vertex in $Q_n$ has an $\eps$-edge-good vertex within $Q_n$-distance~4, we show that there exists $\eps$ such that with probability at least $1-e^{-\Omega(n^2)}$ all $\eps$-edge-good vertices are in the same component. To complete the proof of the proposition we show that for all $\eps\in(0,1)$, with probability at least $1-e^{-\Omega(n^2)}$ a constant fraction of the vertices in $Q_n$ are $\eps$-edge-good. Parts of the arguments that follow, particularly in the proofs of Lemma~\ref{lem:vertex-good-pair} and Lemma~\ref{lem:good-then-connected}, are based on arguments of McDiarmid, the fourth author, and Withers in~\cite{mcdiarmid2021component}.

\begin{lemma}\label{lem:vertex-good}
 For all\/ $p_v\in (0,1]$, $p_e\in(0,1]$, and $n\geq 12$, and all models in the class $\cD(Q_n,p_v,p_e)$, the probability that a given vertex is vertex-good is at least $p_v^3/3$.
\end{lemma}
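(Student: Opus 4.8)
The plan is to exploit the fact that vertex\nobreakdash-goodness concerns only the states of the sites, and to control $|N^2_{H_v}(v)|$ via its conditional expectation given that $v$ is open, finishing with Lemma~\ref{lem:markov}. Fix a model in $\cD(Q_n,p_v,p_e)$ and a vertex $v$. After relabelling coordinates we may assume that, inside the copy of $Q_{n-1}$ formed by the vertices sharing the first coordinate of $v$, the vertex $v$ is the all-zeros vertex, with neighbours $e_1,\dots,e_{n-1}$ and second neighbourhood $\{e_i+e_j:1\le i<j\le n-1\}$, so that $|N^2_{Q_{n-1}}(v)|=\binom{n-1}{2}$. Since $H_v$ is an induced subgraph of this $Q_{n-1}$, and since the only common neighbours of $v$ and $e_i+e_j$ in $Q_{n-1}$ are $e_i$ and $e_j$, the key observation is that, on the event that $v$ is open, a vertex $e_i+e_j$ lies in $N^2_{H_v}(v)$ precisely when $e_i+e_j$ is open and at least one of $e_i,e_j$ is open.

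First I would show $\Prb(e_i+e_j\in N^2_{H_v}(v)\mid v\text{ open})\ge p_v^2$ for each pair $i<j$. The one\nobreakdash-vertex subgraph $\{v\}$ is vertex\nobreakdash-disjoint from the subgraph induced on $\{e_i,e_j,e_i+e_j\}$, so by the independence axiom (iii) of Definition~\ref{def:renormalised} the state of $v$ is independent of the joint states of $e_i,e_j,e_i+e_j$; hence the conditional probability above equals the unconditional $\Prb(e_i+e_j\text{ open and }(e_i\text{ or }e_j\text{ open}))$, which is at least $\Prb(e_i+e_j\text{ open})\,\Prb(e_i\text{ open})\ge p_v^2$ using axiom (i) and the vertex\nobreakdash-disjointness of $\{e_i+e_j\}$ and $\{e_i\}$. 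Summing over all $\binom{n-1}{2}$ pairs gives $\E[\,|N^2_{H_v}(v)|\mid v\text{ open}\,]\ge p_v^2\binom{n-1}{2}$.

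Next I would apply Lemma~\ref{lem:markov} to $X=|N^2_{H_v}(v)|$ under the conditional measure given that $v$ is open: as $X$ takes values in $[0,\binom{n-1}{2}]$ with conditional expectation at least $p_v^2\binom{n-1}{2}$, taking $c=p_v^2\in(0,1]$ and $\eps=\tfrac13$ yields $\Prb(X\ge \tfrac23 p_v^2\binom{n-1}{2}\mid v\text{ open})\ge \tfrac13 p_v^2$. An elementary calculation shows $\tfrac23\binom{n-1}{2}=\tfrac{(n-1)(n-2)}{3}\ge\tfrac{n^2}{4}$ exactly when $n^2-12n+8\ge0$, i.e.\ for all $n\ge12$; on this range the event just bounded is contained in the event $\{|N^2_{H_v}(v)|\ge p_v^2n^2/4\}$ that $v$ is vertex\nobreakdash-good (given $v$ open). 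Combining with $\Prb(v\text{ open})\ge p_v$ from axiom (i) gives $\Prb(v\text{ vertex-good})\ge p_v\cdot\tfrac13 p_v^2=p_v^3/3$.

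The only real care needed is the independence bookkeeping: one must pick the vertex\nobreakdash-disjoint subgraphs so that conditioning on ``$v$ open'' can be removed and so that the product bound $\Prb(e_i+e_j,e_i\text{ open})=\Prb(e_i+e_j\text{ open})\Prb(e_i\text{ open})$ is licensed by axiom (iii), which just requires checking the relevant subgraphs share no vertices. Everything else---the inequality $n^2-12n+8\ge0$ for $n\ge12$ and the sum for the conditional expectation---is routine. In particular $p_e$ and axiom (ii) play no role, consistent with the earlier remark that vertex\nobreakdash-goodness depends only on the states of the vertices.
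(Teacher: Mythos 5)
Your proof is correct and follows essentially the same route as the paper: condition on $v$ being open, lower-bound $\E[\,|N^2_{H_v}(v)|\mid v\text{ open}\,]$ by $p_v^2\binom{n-1}{2}$ using independence of vertex states, apply Lemma~\ref{lem:markov} with $\eps=\tfrac13$, and check $\tfrac23\binom{n-1}{2}\ge n^2/4$ for $n\ge12$. The only difference is that you spell out the independence bookkeeping via axiom (iii) where the paper simply notes that vertices are open independently, which is a harmless elaboration.
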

\begin{proof}
Let $n\geq 12$ and fix a model in $\cD(Q_n,p_v,p_e)$. Fix a vertex $v\in V(Q_n)$, condition on it being open, and denote the size of its second neighbourhood in $H_v$ by $X_v=|N^2_{H_v}(v)|$. Let $u\in N^2_{Q_n}(v)$ have the same first coordinate as $v$, and let $w$ be a common neighbour of $u$ and $v$ in~$Q_n$. Since vertices of $Q_n$ are open independently, the probability that $u$ and $w$ are open given that $v$ is open is at least $p_v^2$. Thus, the probability that $u$ is in $N^2_{H_v}(v)$ is at least $p_v^2$ and $\E[X_v\mid v\text{ open}]\geq p_v^2\binom{n-1}{2}$. Hence, applying Lemma~\ref{lem:markov} and noting that $p_v^2n^2/4 \leq 2p_v^2\binom{n-1}{2}/3$ for $n \geq 12$, we have
\begin{align*}
 \Prb\big(v\text{ vertex-good}\mid v\text{ open}\big)
 &\geq \Prb\big(X_v\geq 2p_v^2\tbinom{n-1}{2}/3
  \mid v\text{ open}\big)\\
 &\geq p_v^2/3.
\end{align*}
It follows that the probability that $v$ is vertex-good is at least $p_v^3/3$ as required.
\end{proof}

\begin{lemma}\label{lem:edge-good}
 Let $p_v\in(0,1]$, $p_e\in(\frac{1}{2},1]$, and\/ $\eps\in(0,1)$ be constants. Then there exists a constant $c = c(p_v, p_e, \eps) >0$ such that for all\/ $n\ge12$ and all models in $\cD(Q_n,p_v,p_e)$, the probability that a given vertex is $\eps$-edge-good is at least $c$.
\end{lemma}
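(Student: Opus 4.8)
The plan is to reduce everything to Lemma~\ref{lem:vertex-good} together with the crude estimate that each edge is closed with probability at most $1-p_e<\tfrac{1}{15}$. First I would condition on $v$ being vertex-good; by Lemma~\ref{lem:vertex-good} this has probability at least $p_v^3/3$, and on this event the set $I(v)$ is well defined. I then want to show that, conditionally on the states of \emph{all} vertices of $Q_n$, the bond configuration makes $v$ be $\eps$-edge-good with probability bounded below by a constant depending only on $p_e$ and $\eps$; multiplying the two bounds will give a value of the form $c=\tfrac{p_v^3}{3}(2p_e-1)\eps$.

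For the conditional step, fix for each $u\in I(v)$ a common neighbour $w(u)$ of $u$ and $v$ inside $H_v$, chosen deterministically (say smallest in the global ordering); such a $w(u)$ exists and is open because $u\in N^2_{H_v}(v)$, and then $v\,w(u)\,u$ is a path of length $2$ in $H$. The vertex $v$ will be $\eps$-edge-good as soon as at least a $(1-\eps)(2p_e-1)$-fraction of the $u\in I(v)$ have \emph{both} of the edges $v\,w(u)$ and $w(u)\,u$ open, since each such $u$ is then joined to $v$ by a length-$2$ path in $G$ and $|I(v)|\ge p_v^2n^2/4$. Writing $Z$ for the number of $u\in I(v)$ for which at least one of these two edges is closed, a union bound gives $\E[Z\mid\text{vertex states}]\le 2(1-p_e)|I(v)|$, so $|I(v)|-Z$ is a random variable in $[0,|I(v)|]$ with conditional mean at least $(2p_e-1)|I(v)|>0$ (positive since $p_e>\tfrac{14}{15}$). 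Applying Lemma~\ref{lem:markov} conditionally then shows that $|I(v)|-Z\ge (1-\eps)(2p_e-1)|I(v)|$ with (conditional) probability at least $(2p_e-1)\eps$; taking expectations over the vertex states and combining with Lemma~\ref{lem:vertex-good} completes the argument.

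The one point that needs care — and the only real obstacle — is justifying the bound $\Prb(e\text{ open}\mid\text{all vertex states})\ge p_e$ used in the union bound for $\E[Z\mid\text{vertex states}]$: a priori condition~(ii) of Definition~\ref{def:renormalised} only controls this probability given that the two endpoints of $e$ are open, not given the states of every vertex. This is resolved by condition~(iii): the subgraph induced on the two endpoints of $e$ is vertex-disjoint from the subgraph induced on all other vertices, so the state of $e$ together with the states of its endpoints is independent of the states of all other vertices, and hence conditioning further on those other states does not change the probability. Everything else is routine bookkeeping with the union bound, Lemma~\ref{lem:markov}, and Lemma~\ref{lem:vertex-good}.
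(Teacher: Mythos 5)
Your proposal is correct and follows essentially the same route as the paper: condition on $v$ being vertex-good (Lemma~\ref{lem:vertex-good}), bound below by $2p_e-1$ the probability that a fixed length-$2$ path through an open common neighbour is open, and then apply Lemma~\ref{lem:markov} to conclude with $c=\eps(2p_e-1)p_v^3/3$. Your extra step of conditioning on all vertex states and invoking condition~(iii) of Definition~\ref{def:renormalised} is a correct and slightly more careful justification of the union-bound estimate that the paper applies implicitly.
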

\begin{proof}
Let $n\geq 12$, fix a model in $\cD(Q_n,p_v,p_e)$ and $v\in V(Q_n)$, and condition on the event that $v$ is vertex-good.
Let $X_v$ be the number of elements of $I(v)$ which are joined to $v$ by a path of length 2 in~$G$.
Given a vertex $u \in I(v)$, let $w$ be an open neighbour of both $u$ and $v$. By a union bound,
the probability that at least one of the edges $uw$ and $wv$ is closed is at most $2(1 - p_e)$,
so the probability that both of them are open is at least $2p_e - 1$. Hence,
$\E[X_v \mid v\text{ is vertex-good}]\geq (2p_e-1)\ceil{p_v^2n^2/4}$.
Applying Lemmas~\ref{lem:markov} and~\ref{lem:vertex-good} we have
\begin{align*}
 \Prb(v\text{ $\eps$-edge-good})
 & =\Prb(X_v\geq (1-\eps)(2p_e-1)p_v^2n^2/4\mid v\text{ vertex-good})\\&\quad\cdot\Prb(v\text{ vertex-good})\\
 & \ge \eps(2p_e-1)\cdot p_v^3/3,
\end{align*}
which completes the proof of the lemma.
\end{proof}

\begin{lemma}\label{lem:vertex-good-pair}
 Let $p_v\in (0,1]$, $p_e\in(0,1]$ and\/ $\eps\in(0,1)$ be constants. Then, in any model in $\cD(Q_n,p_v,p_e)$, all pairs $\{u,v\}$ of distinct, vertex-good vertices of\/ $Q_n$ at distance at most $9$ from one another are $\eps$-vertex-good with probability at least $1-e^{-\Omega(n^2)}$.
\end{lemma}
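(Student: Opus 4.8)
\emph{Reduction to one pair.} The plan is first to reduce to a single pair. Since there are at most $2^n\sum_{i=1}^{9}\binom ni=e^{O(n)}$ pairs $\{u,v\}$ with $d(u,v)\le 9$, and $e^{O(n)}$ is negligible against $e^{\Omega(n^2)}$, it suffices to fix one such pair, with $d:=d(u,v)\le 9$, and to show that with probability $1-e^{-\Omega(n^2)}$ either one of $u,v$ fails to be vertex-good or $\{u,v\}$ is $\eps$-vertex-good. Because distinct vertices span vertex-disjoint one-vertex subgraphs, part~(iii) of Definition~\ref{def:renormalised} makes the vertex states mutually independent, each vertex being open with probability at least $p_v$, so the set of open vertices stochastically dominates independent site percolation of density $p_v$ and I may apply concentration inequalities to it freely. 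I would then reveal the states of all vertices within Hamming distance $2$ of $u$ or of $v$ --- a set $R$ with $|R|=O(n^2)$; this determines whether $u$ and $v$ are vertex-good and, if so, the sets $I(u)$ and $I(v)$, each of size $N:=\ceil{p_v^2n^2/4}=\Theta(n^2)$. If one of $u,v$ is not vertex-good we are done, so from now on I work conditionally on an outcome in which both are, noting that every vertex outside $R$ is still open independently with probability at least $p_v$.

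\emph{Short candidate paths.} Write $w=u\oplus v$ (coordinatewise mod $2$); this has weight $d$, supported on a coordinate set $S$. I would call a vertex of $I(u)$ (or $I(v)$) \emph{central} if the two coordinates in which it differs from $u$ (resp.\ $v$) avoid $S$; all but $O(n)$ vertices of each set are central. Next I would fix a near-canonical bijection $\beta$ between the central vertices of $I(u)$ and all but $O(n)$ of those of $I(v)$, pairing $a$ with $a\oplus w$ whenever $a\oplus w$ is central and lies in $I(v)$; note that any central vertex of $I(v)$ is within distance $d+4\le 13$ of any central $a\in I(u)$. For such an $a$ and a coordinate $c\notin S$ avoiding the defining coordinates of $a$ and of $\beta(a)$, there is a path from $a$ to $\beta(a)$ of length at most $d+6\le 15$ obtained by flipping $c$, then flipping the $\le d+4$ coordinates in which $a$ and $\beta(a)$ differ in a carefully chosen order, then flipping $c$ back. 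The facts I would extract are: (a) for fixed $a$ these $\ge n-O(1)$ paths (one per admissible $c$) have pairwise disjoint interiors; (b) a suitable flip-order keeps every interior vertex at distance $\ge 3$ from both $u$ and $v$, hence outside $R$, so each path is entirely open with probability at least $p_v^{\,d+5}\ge p_v^{\,14}$, a positive constant; and (c) any single vertex lies in the interiors of only $O(1)$ of all of these candidate paths, over all $a$ and $c$.

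\emph{Two estimates.} With this setup I would finish via two estimates. First, reachability: pick a constant $K=K(\eps,p_v)$ with $(1-p_v^{14})^K<\eps/4$, retain $K$ candidate paths per central $a$, and call $a$ \emph{reached} if one of them is entirely open; by (a) and (b) the expected number of unreached central $a$ is at most $\tfrac\eps4 N$. This count is a function of the independent states of the $O(n^2)$ vertices lying on a retained path, and by (c) altering one such state changes it by $O(1)$, so the bounded-differences inequality shows that at most $\tfrac\eps2 N$ central vertices are unreached with probability $1-e^{-\Omega(n^2)}$ --- it is the fact that only $O(n^2)$ vertices are relevant that pushes this into the $e^{-\Omega(n^2)}$ regime. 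Second, disjointness: among the reached central vertices I would select one entirely-open path each so that the selected paths are pairwise vertex-disjoint, losing at most a further $\tfrac\eps4 N$; here the selected paths automatically end at distinct vertices of $I(v)$ since $\beta$ is injective, property (c) keeps the ``conflict graph'' of retained paths sparse, and the $\ge n-O(1)$ unused detour coordinates leave room to reroute clashes, so a defect form of Hall's theorem (or an augmenting-path argument in the style of~\cite{mcdiarmid}), together with one more Chernoff bound on the relevant neighbourhood sizes, should yield $(1-\eps)N$ vertex-disjoint open paths of length $\le 15$ between $I(u)$ and $I(v)$. This makes $\{u,v\}$ $\eps$-vertex-good, and the union bound over pairs completes the argument.

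\emph{Main obstacle.} The hard part will be the disjointness step, and more precisely a tension running through the whole argument: the candidate families must be rich enough that almost every central source is reached, can be rerouted around collisions, and still lands in the prescribed set $I(v)$ rather than merely the second neighbourhood of $v$ --- yet the set of vertices these candidates collectively touch must stay of size $O(n^2)$, since otherwise the bounded-differences estimate degrades to $e^{-\Omega(n)}$, which is too weak for the union bound over pairs. Choosing the order of the coordinate-flips so that no interior vertex re-enters $R$, getting a clean combinatorial handle on how short paths in $Q_n$ can intersect so as to run the rerouting argument, and carrying all of this out with only two units of slack in the length budget once $d=9$, is the technical core, and is where I expect to lean on the techniques of~\cite{mcdiarmid}.
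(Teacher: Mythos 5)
Your overall architecture (reveal a radius-2 ball around $u,v$, build many short candidate paths per source vertex of $I(u)$ using a spare ``detour'' coordinate, then concentrate) is in the right spirit, but as written there are two genuine gaps, both at the point you yourself flag as the core. First, your claim (c) — that every vertex lies in the interiors of only $O(1)$ candidate paths over all $a$ and $c$ — is false for a generic flip order and is not established for any specific one. If the flip order removes the two coordinates of $a$ before the target $\beta(a)$ is fully added, an interior vertex of the form $\{c\}\cup T$ with $T\subseteq v\cup\{b_1,b_2\}$ retains no information about the source, and $\Theta(n^2)$ different sources can pass through it; then the bounded-differences constant blows up and the $e^{-\Omega(n^2)}$ bound (which you correctly identify as essential for the union bound over $e^{O(n)}$ pairs) is lost. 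This can be repaired (e.g.\ keep both coordinates of $a$ until all of $\beta(a)$ has been added, so every interior vertex contains either all of $a$ or all of $\beta(a)$ plus the detour coordinate, giving bounded multiplicity since path vertices have bounded weight), but that choice has to be made and verified; it is exactly the delicacy that the paper's construction is engineered around. Second, and more seriously, the disjointness step does not follow from what the reachability step gives you. ``Reached'' only guarantees one open retained path per source, and with per-vertex multiplicity a fixed constant the conflict graph on these single open paths can have constant degree, so any selection of pairwise vertex-disjoint representatives may lose a constant fraction of the sources — far more than the $\eps N/4$ you need ($N=\ceil{p_v^2n^2/4}$). A defect Hall statement is not the right shape here (this is an independent-transversal/list-selection problem, not a bipartite matching), and ``room to reroute'' is not an argument. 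To make your route work you must strengthen reachability to ``at least $L$ open retained paths'' with $L$ large compared with the multiplicity constant and $1/\eps$ (another Chernoff-plus-bounded-differences step), and then run a greedy or Haxell-type transversal argument; only then is the loss $O(N/L)\le \eps N/4$.

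It is worth noting that the paper sidesteps all of this machinery. There, each pair $(a,b)$ is assigned an order type $k\in[13]$, and its $N$ paths use detour coordinates from a dedicated block $\{(k-1)N+10,\dots,kN+9\}$; the vertex sequence is chosen so that any internal vertex determines the pair (Claim~\ref{claim:disjoint-paths}). Consequently paths for \emph{different} sources are completely vertex-disjoint, the per-source success events are genuinely independent, and a single Chernoff bound over the $\ge(1-\eps/2)p_v^2n^2/4$ sources gives the $1-e^{-\Omega(n^2)}$ bound — no multiplicity bound, no McDiarmid inequality, and no selection/rerouting step are needed. Your plan trades that combinatorial coding for concentration-plus-selection; it can be completed along the lines above, but as it stands claims (c) and the $\eps N/4$ loss in the disjointness step are unsupported.
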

\begin{proof}
Note that we may assume $n$ is large. Fix a model in $\cD(Q_n,p_v,p_e)$, and let $u$ and $v$ be distinct vertices of $Q_n$ at distance at most 9 from one another. We will show that the probability that $u$ and $v$ are both vertex-good, but the pair $\{u,v\}$ is not $\eps$-vertex-good is at most $e^{-\Omega(n^2)}$. Since there are at most $4^n$ pairs of vertices in $Q_n$, a union bound will complete the proof. 
    
Roughly, we will choose large sets $A \subseteq I(u)$ and $B \subseteq I(v)$, pair up the vertices in $A$ and $B$, and then to each pair $(a,b)$ associate many paths from $a$ to $b$ in $Q_n$. The paths for a given pair will be vertex-disjoint except at their endpoints so the internal vertices will all be open independently. This means it is highly likely that for at least one of these paths all the vertices will be open. The paths for different pairs will be vertex-disjoint and so there will be a vertex-open path for each pair independently. Since $A$ and $B$ are large, a Chernoff bound will complete the proof.
    
We identify $Q_n$ with $\mathcal{P}(n)$ in the natural way and assume without loss of generality that $u=\emptyset$ and $v=[d]$, where $d\leq 9$ is the distance between $u$ and~$v$. Start by revealing the states of all vertices at distance (in $Q_n$) at most 2 from $u$ or~$v$. If $u$ or $v$ is not vertex-good, then there is nothing to prove and we are done.
Otherwise we have large sets $I(u)$ and $I(v)$ of
vertices that are at distance 2 in $H$ from $u$ and $v$ respectively. Let $N=N(p_v,\eps)$ be a large constant, and construct sets $A\subseteq I(u)$ and $B\subseteq I(v)$ as follows.
\begin{enumerate}
 \item Remove from $I(u)$ or $I(v)$ any sets which differ from $u$ or $v$ in any of the elements $1,\dots,13N+9$. Let the new sets be $A_0$ and $B_0$ respectively.
 \item Arbitrarily delete elements from the larger of $A_0$ and $B_0$ until they are of equal size to obtain $A$ and $B$.
\end{enumerate}
We first show that $A$ and $B$ are large. As elements of $I(u)$ or $I(v)$ differ from $u$ and $v$ in exactly two
elements, the first step above removes at most $(13N+9)n$ elements from each of $I(u)$ and $I(v)$. As
$|I(u)|=|I(v)|=\ceil{p_v^2n^2/4}$, we have  $|A|=|B|=\min\{|A_0|,|B_0|\}\ge (1-\eps/2)p_v^2n^2/4$ for large enough~$n$.

Arbitrarily pair up elements of $A$ and $B$ to obtain $(a_1,v\cup b_1),\dots,(a_\ell,v \cup b_\ell)$ where $\ell=|A|=|B|$ and $a_i\in A$ and $v\cup b_i\in B$ for $i=1,\dots,\ell$. Here each $a_i$ and $b_i$ is a pair of elements in $\{13N+10,\dots,n\}$.
To each pair $(a,b)=(a_i,b_i)$ we associate an integer in $[13]$ which we will use to construct the paths associated to the pair. Given a pair $(a,b)$, label the elements of $a$ as $\alpha_1$ and $\alpha_2$, and the elements of $b$ as $\beta_1$ and $\beta_2$, where we assume that $\alpha_1<\alpha_2$ and $\beta_1<\beta_2$.  If $\alpha_1, \alpha_2, \beta_1$, and $\beta_2$ are all distinct, then they have six possible orderings when they are sorted into ascending order, and we associate each order with a distinct number in $[6]$ arbitrarily. If $\alpha_1 = \beta_1$ but $\alpha_2 \neq \beta_2$, then there are two possible orderings and we associate them to $7$ and $8$. Continuing in this manner, we can associate a unique integer in $[13]$ to each of the 13 possible orderings. 
    
Given a pair $(a,b)$ and $j \in [N]$, define a path $P_{a,j}$ from $a$ to $v\cup b$ as follows. Let $k \in [13]$ be the integer associated to $(a,b)$ as above, and let $x = (k-1)N+j+9$. Begin with the vertex $a$ followed by
$a\cup \{x\}$, $a\cup \{\beta_1, x\}$, and $a \cup b \cup \{x\}$, ignoring duplicate vertices. From here we add the elements of $v=[d]$ one by one in increasing order until we reach  $a\cup b\cup v\cup \{x\}$. Finally, we add the vertices $b\cup  v\cup \{\alpha_2, x\}$, $b\cup v\cup \{x\}$, and $b\cup v$. Note that the path contains at most 16 vertices, with the exact number depending on $|a \cap b|$ and~$d$.
    
\begin{claim}\label{claim:disjoint-paths}
 Let $a, a' \in A$ and $j, j' \in [N]$ be such that $a \neq a'$. Then the paths $P_{a,j}$ and $P_{a', j'}$ are vertex-disjoint, and if $j \neq j'$, then the paths $P_{a,j}$ and $P_{a,j'}$ are vertex-disjoint except at their endpoints.
\end{claim}
\begin{proof}
Let $v\cup b\in B$ be the vertex paired with $a$, and let $k\in[13]$ be the integer associated to the pair $(a,b)$. We start with the second part of the claim. Every vertex in the path $P_{a,j}$ except the two end vertices contains a unique element $x = (k-1)N+j+9$ in $[10,13N+9]$, and so $P_{a,j}$ and $P_{a,j'}$ can only share an internal vertex if $(k-1)N+j+9 = (k-1)N+j'+9$, which implies $j = j'$. Moreover, it cannot be the case that an internal vertex of $P_{a,j}$ is an endpoint of $P_{a,j'}$ as the endpoints do not contain any element of $[10,13N+9]$.
        
Turning to the first part of the claim, let $z$ be a vertex on the path $P_{a,j}$. We will show that the set $a$ is uniquely determined. Suppose first that $z$ contains an element $x\in[10,13N + 9]$. Since $x > 9$, we have $x \notin v$ and since $x \leq 13N + 9$, we have $x \notin a \cup b$. Hence, $x = (k-1)N+j+9$ and we can read off the value of $k$. If $z\cap v=\emptyset$, then $z$ is one of $a\cup \{x\}$, $a\cup \{\beta_1, x\}$, or $a \cup b \cup \{x\}$. Using the value of $k$ and the size of $z$ we can deduce which case we are in, and further, which of the elements form the pair~$a$. Similarly, if $v\subseteq z$, then $z$ is one of $a\cup b\cup v\cup \{x\}$, $b\cup  v\cup \{\alpha_2, x\}$, or $b\cup v\cup \{x\}$,
and using the value of $k$ and the size of $z$ we can deduce which case we are in, and further, which of the elements form the pair~$b$ (which then determines~$a$). Finally, if $\emptyset\ne z\cap v\ne v$ then $z\setminus v=a\cup b\cup\{x\}$ and again we can determine where we are in the path (from $|z\cap v|$) and the value of $a$ from~$k$. Finally, we consider the case where $z$ does not contain an element $x\in[10,13N + 9]$. This means $z = a$ or $z = b \cup v$, and $a$ is clearly uniquely determined in both cases.
\end{proof}
    
For each $a \in A$, let $C_a$ be the event that all the vertices are open in at least one of the paths $P_{a,j}$, $j \in [N]$. By construction, every vertex on the paths except for the endpoints are at distance at least 3 from $u$ and~$v$, and they are still open independently with probability at least~$p_v$. The paths contain at most 14 internal vertices and hence each path is open with probability at least $p_v^{14}$. As the paths are disjoint except at their endpoints, the vertices in each path are open independently and, by choosing $N$ large enough relative to $\eps$ and~$p_v$, the event $C_a$ has probability at least $1 - \eps/2$ for all~$a$.
    
By Claim~\ref{claim:disjoint-paths}, for distinct $a,a'\in A$ the events $C_a$ and $C_{a'}$ depend on disjoint sets of vertices and are independent. By a Chernoff bound, $C_a$ holds for at least $(1-\eps)p_v^2n^2/4$ values of $a$ with probability at least $1-e^{-\Omega(n^2)}$. There are at most $4^n$ pairs of vertices in $Q_n$, so by a union bound, every pair $\{u,v\}$ of distinct, vertex-good vertices in $Q_n$ at distance at most 9 from one another is $\eps$-vertex-good with probability at least $1-e^{-\Omega(n^2)}$, as required.
\end{proof}

\begin{lemma}\label{lem:edge-good-pair}
 Let $p_v\in(0,1]$, $p_e\in(\frac{14}{15},1]$, and\/ $\eps \in (0,1)$ be constants. Then, under every model in $\cD(Q_n,p_v,p_e)$, every pair of distinct, vertex-good vertices of\/ $Q_n$ at distance at most $9$ from one another form an $\eps$-edge-good pair with probability at least $1-e^{-\Omega(n^2)}$.
\end{lemma}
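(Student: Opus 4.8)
The plan is to feed the many short vertex-disjoint paths supplied by Lemma~\ref{lem:vertex-good-pair} into a Chernoff bound, after observing that each such path is fully open in~$G$ with probability bounded below by a positive constant. Fix $\eps'\in(0,1)$, to be chosen small in terms of~$\eps$ later, and let $F$ be the event that every pair of distinct, vertex-good vertices of~$Q_n$ at distance at most~$9$ is $\eps'$-vertex-good; by Lemma~\ref{lem:vertex-good-pair} applied with $\eps'$ in place of~$\eps$ we have $\Prb(F)\ge 1-e^{-\Omega(n^2)}$, and $F$ is determined by the site states alone. It therefore suffices to show that for each fixed pair $\{u,v\}$ of distinct vertices of~$Q_n$ at distance at most~$9$, the probability that $F$ holds, $u$ and $v$ are vertex-good, but $\{u,v\}$ is not $\eps$-edge-good is at most $e^{-\Omega(n^2)}$; a union bound over the at most $4^n$ such pairs, together with $\Prb(\overline{F})\le e^{-\Omega(n^2)}$, then proves the lemma.

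To bound this probability I would first reveal all site states of~$Q_n$. If $u$ or $v$ is not vertex-good, or if $F$ fails, there is nothing to prove; otherwise there are at least $(1-\eps')p_v^2n^2/4$ vertex-disjoint paths in~$H$ of length at most~$15$ between $I(u)$ and~$I(v)$, and I would select a family $\mathcal{P}$ of this many of them by some rule depending only on the already-revealed sites. Every vertex on every path of $\mathcal{P}$ is open. Now reveal the edge states. For $P\in\mathcal{P}$ let $D_P$ be the event that all of the (at most~$15$) edges of~$P$ are open in~$G$. Since an edge together with its two endpoints is vertex-disjoint from every other vertex of~$Q_n$, parts (ii) and (iii) of Definition~\ref{def:renormalised} imply that, conditioned on the vertices of~$P$ being open, each edge of~$P$ is open with probability at least~$p_e$, so a union bound gives $\Prb(D_P)\ge 15p_e-14=:\rho$, which is positive because $p_e>\tfrac{14}{15}$. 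As the paths in $\mathcal{P}$ are pairwise vertex-disjoint, part (iii) of Definition~\ref{def:renormalised} makes the events $(D_P)_{P\in\mathcal{P}}$ mutually independent (conditionally on the revealed site configuration).

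Let $Y$ be the number of $P\in\mathcal{P}$ for which $D_P$ holds; then $Y$ stochastically dominates a $\textnormal{Bin}(m,\rho)$ random variable, where $m=|\mathcal{P}|\ge(1-\eps')p_v^2n^2/4$, so $\E[Y]\ge\rho(1-\eps')p_v^2n^2/4$, and by a Chernoff bound (Lemma~\ref{lem:chernoff}) for any fixed $\delta\in(0,1)$ we have $Y\ge(1-\delta)\rho(1-\eps')p_v^2n^2/4$ with probability at least $1-e^{-\Omega(n^2)}$. Choosing $\eps'$ and $\delta$ so that $(1-\delta)(1-\eps')\ge 1-\eps$ (for instance $\eps'=\delta=\eps/3$) yields $Y\ge(1-\eps)(15p_e-14)p_v^2n^2/4$; since each path counted by~$Y$ is a path of~$G$ between $I(u)$ and~$I(v)$ and these paths are vertex-disjoint, on this event $\{u,v\}$ is $\eps$-edge-good. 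I do not expect a genuine obstacle here; the only points requiring care are the order of revelation — sites first, which pins down both $F$ and the choice of~$\mathcal{P}$, and only then the edges, whose states on the vertex-disjoint paths of~$\mathcal{P}$ are independent — and the small piece of arithmetic ensuring $(1-\delta)(1-\eps')\ge 1-\eps$.
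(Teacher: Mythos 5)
Your proposal is correct and follows essentially the same route as the paper: reveal the sites first, invoke Lemma~\ref{lem:vertex-good-pair} with a slightly smaller parameter (the paper uses $\eps/2$ where you use $\eps'=\eps/3$), note that each of the vertex-disjoint paths is fully open with probability at least $15p_e-14$ and that these events are independent by part (iii) of Definition~\ref{def:renormalised}, then finish with a Chernoff bound and a union bound over the at most $4^n$ pairs. Your extra care about the order of revelation and the conditional edge probabilities is exactly the reasoning the paper leaves implicit.
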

\begin{proof}
Let $n\in\N$ and fix a model in $\cD(Q_n,p_v,p_e)$. Reveal the graph $H$ (i.e., the states of the vertices, but not the states of the edges) and suppose that every pair of distinct vertex-good vertices at distance at most $9$ from one another form an $(\eps/2)$-vertex-good pair. By Lemma~\ref{lem:vertex-good-pair}, the probability that this does not hold is at most $e^{-\Omega(n^2)}$. Let $u$ and $v$ be distinct vertex-good vertices in $H$ at distance at most 9 from one another in~$Q_n$. We will show that the probability that the pair $\{u,v\}$ is not $\eps$-edge-good (given the graph $H$) is at most $e^{-\Omega(n^2)}$, then since there are at most $4^n$ choices for $u$ and~$v$, the probability that there are two distinct vertex-good vertices which do not form an $\eps$-edge-good pair is also at most $e^{-\Omega(n^2)}$.
    
By our assumption on $H$, the pair $\{u,v\}$ is an $(\eps/2)$-vertex-good pair so there is a set of $\ceil{(1-\eps/2)p_v^2n^2/4}$ vertex-disjoint paths in~$H$, each of length at most~15, between $I(u)$ and $I(v)$. Since the paths are vertex-disjoint and we have only conditioned on the realisation of~$H$, it follows from the fact that the model is in $\cD(Q_n,p_v,p_e)$ that each path is open independently. Further, the probability that a given path is open is at least $15p_e-14$, and applying a Chernoff bound shows that $\{u,v\}$ is an $\eps$-edge-good pair with probability at least $1-e^{-\Omega(n^2)}$, as required.
\end{proof}

In the proof of the next lemma we will make use of the following theorem of Wilson which gives the maximum number of edge-disjoint copies of $K_4$ that can be packed into $K_n$ (see also~\cite{brouwer1979optimal}).
\begin{theorem}[\cite{wilson1970construction}]\label{thm:wilson}
 If\/ $n$ is sufficiently large, then the maximum cardinality of a set of\/ $4$-subsets of $[n]$ which pairwise intersect in at most one element is $\floor{\frac{n}{4}\floor{\frac{n-1}{3}}}-1$ if\/ $n\equiv7,10\bmod{12}$ and\/ $\floor{\frac{n}{4}\floor{\frac{n-1}{3}}}$ otherwise.
\end{theorem}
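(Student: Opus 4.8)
The statement is equivalent to a graph-decomposition fact: a family of $4$-subsets of $[n]$ that pairwise meet in at most one point is exactly a set of pairwise edge-disjoint copies of $K_4$ in the complete graph $K_n$ (a \emph{$K_4$-packing}), since two $4$-cliques are edge-disjoint precisely when their vertex sets share at most one vertex. So I want the maximum number $D(n)$ of edge-disjoint $K_4$'s in $K_n$. The plan is to prove a matching upper and lower bound: the upper bound by an elementary double count, sharpened in the classes $n\equiv 7,10\bmod{12}$ by a divisibility obstruction; and the lower bound by an explicit recursive construction based on the classical existence theory of block designs.

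For the upper bound, fix a vertex $x$. Any two packed $K_4$'s that contain $x$ must otherwise be vertex-disjoint (a shared second vertex would give a shared edge at $x$), so the number of blocks through $x$ is at most $\floor{(n-1)/3}$. Counting incident pairs (vertex, block) in two ways gives $4\,D(n)\le n\floor{(n-1)/3}$, hence $D(n)\le\floor{\tfrac n4\floor{\tfrac{n-1}{3}}}$. To obtain the extra $-1$ when $n\equiv 7,10\bmod{12}$, I would suppose the bound is attained: then all but a bounded number of vertices lie in exactly $\floor{(n-1)/3}$ blocks, which pins down the degree sequence of the \emph{leave} graph (the edges of $K_n$ in no block), and a short parity computation relating its number of edges $\binom n2-6D(n)$ to its degree sum shows this configuration cannot exist. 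This part is finicky but entirely elementary.

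For the lower bound I would build near-optimal packings using Hanani's and Wilson's existence theorems: a Steiner system $S(2,4,v)$ --- a perfect edge-decomposition of $K_v$ into $K_4$'s --- exists whenever $v\equiv 1,4\bmod{12}$, and group-divisible designs with block size $4$ exist for all admissible parameters. For a general $n$, write $n=v+r$ with $v\equiv 1$ or $4\bmod{12}$ and $r$ small, take a Steiner system on $v$ of the points, and use Wilson's fundamental construction together with ``filling in the holes'' to extend the packing over the remaining $r$ points while covering as many of the outstanding pairs as possible. Tracking the still-uncovered pairs in each residue class modulo $12$ shows the packing meets $\floor{\tfrac n4\floor{\tfrac{n-1}{3}}}$ outside $\{7,10\}\bmod{12}$ and $\floor{\tfrac n4\floor{\tfrac{n-1}{3}}}-1$ inside, matching the upper bound, and a finite list of small $n$ would be dealt with directly.

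The main obstacle is the lower bound: producing optimal packings uniformly across all twelve residue classes modulo $12$ needs the full recursive apparatus of design theory (pairwise balanced designs, group-divisible designs, Wilson's fundamental construction) together with a nontrivial but finite collection of base examples --- this is the substance of the Wilson/Brouwer argument. By contrast the upper bound, including the refinement in the two exceptional classes, is a short counting exercise.
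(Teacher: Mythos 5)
The paper does not prove this statement at all: it is quoted from Wilson~\cite{wilson1970construction} (see also Brouwer~\cite{BROUWER1979278}), and the authors immediately remark that for their application only a bound of $\Omega(n^2)$ is needed, which is easy to construct directly. So there is no in-paper argument to compare yours against; what you have sketched is essentially the standard route from the cited literature. Your upper-bound half is correct and genuinely elementary: at most $\floor{(n-1)/3}$ blocks pass through any fixed vertex, whence $D(n)\le\floor{\frac{n}{4}\floor{\frac{n-1}{3}}}$, and for $n\equiv 7,10\bmod{12}$ equality would force the leave graph to have exactly $3$ edges while every leave-degree is congruent to $n-1\equiv 0\bmod 3$; the degree sum $6$ would then be carried by at most two vertices, impossible for a simple graph with three edges. (This divisibility form of the argument is cleaner than tracking which vertices attain $\floor{(n-1)/3}$ blocks, but your version works.) The lower-bound half, however, is a plan rather than a proof: constructing packings that meet the bound in every residue class rests on Hanani's existence theorem for $S(2,4,v)$, group-divisible designs, Wilson's fundamental construction and ``filling in holes'', plus a finite list of small cases (consistent with the ``$n$ sufficiently large'' in the statement). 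That recursive apparatus is precisely the substance of the Wilson/Brouwer result being cited, so as written your proposal reproduces its outline rather than giving an independent argument for the hard direction. For the paper's purposes this is more than is needed: a self-contained proof of the $\Omega(n^2)$ packing the authors actually use is a few lines, whereas the exact packing number is rightly treated as a black-box citation.
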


In fact, it will be sufficient for our purposes that this maximum cardinality is $\Omega(n^2)$, and it is not difficult to construct suitable sets of this size.

\begin{lemma}\label{lem:good-then-connected}
 Let $p_v\in(0,1]$ and\/ $p_e\in(\frac{18}{19},1]$ be constants. Then there exists a constant $\eps\in(0,1)$ such that, under every model in $\cD(Q_n,p_e,p_v)$, the probability that all\/ $\eps$-edge-good vertices in $Q_n$ are in the same component in $G$ is at least $1-e^{-\Omega(n^2)}$.
\end{lemma}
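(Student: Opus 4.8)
The plan is to fix a small constant $\eps>0$ (to be pinned down at the end), assume $n$ is large, establish two high-probability structural facts, and then combine them deterministically. The first is a \emph{sprinkling} statement: for every $\eps\in(0,1)$, with probability $1-e^{-\Omega(n^2)}$ every vertex of $Q_n$ has an $\eps$-edge-good vertex within distance~$4$. The key observation is that whether a vertex $z$ is $\eps$-edge-good is determined by the states of the vertices and edges within distance~$2$ of~$z$ (the set $N^2_{H_z}(z)$, the set $I(z)$, and the relevant length-$2$ paths in $G$ all live in that ball). Fix a vertex~$x$. By Theorem~\ref{thm:wilson} there is a family $S_1,\dots,S_m$ of $4$-subsets of $[n]$ with $m=\Omega(n^2)$ that pairwise intersect in at most one element; put $y_i=x\oplus\mathbf 1_{S_i}$, so $d(x,y_i)=4$ and $d(y_i,y_j)\ge 6$ for $i\ne j$. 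Then the radius-$2$ balls about the $y_i$ are pairwise disjoint, so by Definition~\ref{def:renormalised}(iii) the events ``$y_i$ is $\eps$-edge-good'' are mutually independent, each of probability at least $c=c(p_v,p_e,\eps)>0$ by Lemma~\ref{lem:edge-good}; hence the chance that none of the $y_i$ is $\eps$-edge-good is at most $(1-c)^m=e^{-\Omega(n^2)}$, and a union bound over the $2^n$ choices of $x$ gives the claim. Combining this with Lemma~\ref{lem:edge-good-pair} via a further union bound, with probability $1-e^{-\Omega(n^2)}$ both (a) every vertex of $Q_n$ has an $\eps$-edge-good vertex within distance~$4$, and (b) every pair of distinct vertex-good vertices at distance at most~$9$ is an $\eps$-edge-good pair.

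Next I would prove the deterministic statement that, assuming (a) and (b), any two distinct $\eps$-edge-good vertices $u,v$ with $d(u,v)\le 9$ lie in the same component of $G$. Since $\eps$-edge-good vertices are vertex-good, (b) supplies at least $(1-\eps)(15p_e-14)p_v^2n^2/4$ vertex-disjoint paths in $G$ between $I(u)$ and $I(v)$, while applying the definition of $\eps$-edge-good to $u$ (resp.\ $v$) gives a set of at least $(1-\eps)(2p_e-1)p_v^2n^2/4$ vertices of $I(u)$ (resp.\ $I(v)$) each joined to $u$ (resp.\ $v$) by a length-$2$ path in~$G$. As $|I(u)|=|I(v)|=\ceil{p_v^2n^2/4}$, at most $\ceil{p_v^2n^2/4}-(1-\eps)(2p_e-1)p_v^2n^2/4$ of the disjoint paths can fail to meet the first set at their $I(u)$-end, and similarly at their $I(v)$-end, so the number of paths joined to $u$ at one end and to $v$ at the other is at least
\[
 \Big[(1-\eps)(19p_e-16)-2\Big]\frac{p_v^2n^2}{4}-O(1).
\]
Since $p_e>\tfrac{18}{19}$ forces $19p_e-16>2$, choosing $\eps>0$ small makes this positive for large~$n$; any such path, together with the two length-$2$ connectors, places $u$ and $v$ in the same component of~$G$.

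To finish, take any two $\eps$-edge-good vertices $u,v$ and a geodesic $u=x_0,x_1,\dots,x_m=v$ in $Q_n$. Using (a), choose an $\eps$-edge-good vertex $g_i$ within distance~$4$ of $x_i$ for each $i$, with $g_0=u$ and $g_m=v$. Then $d(g_i,g_{i+1})\le 4+1+4=9$, so the previous paragraph puts $g_i$ and $g_{i+1}$ in a common component of $G$; chaining along the geodesic shows $u$ and $v$ are in the same component. As $u,v$ were arbitrary, all $\eps$-edge-good vertices lie in a single component of $G$ whenever (a) and (b) hold, which occurs with probability $1-e^{-\Omega(n^2)}$, as required.

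I expect the sprinkling step to be the main obstacle: the substantive point is the locality observation that $\eps$-edge-goodness is a radius-$2$ event, which is precisely what lets Wilson's $K_4$-packing theorem furnish $\Omega(n^2)$ genuinely independent trials near each vertex, and hence a per-vertex failure probability small enough to survive the union bound over all $2^n$ vertices. The counting in the middle step is routine arithmetic, but it is exactly where the hypothesis $p_e>\tfrac{18}{19}$ enters, and it is tight there: at $p_e=\tfrac{18}{19}$ the inequality $(1-\eps)(19p_e-16)>2$ fails for every $\eps>0$.
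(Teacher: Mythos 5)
Your proposal is correct and follows essentially the same route as the paper: the same pigeonhole count (using the $\eps$-edge-good pair paths together with the length-$2$ connectors at $u$ and $v$, with $(1-\eps)(19p_e-16)>2$) to join $\eps$-edge-good vertices at distance at most $9$, the same Wilson-packing/independence argument for finding an $\eps$-edge-good vertex within distance $4$ of every vertex, and the same chaining along a path in $Q_n$. The only differences are cosmetic (explicit pairing via a set $T$ in the paper versus counting path endpoints directly, and chaining through every $x_i$ rather than $x_5,\dots,x_{k-5}$).
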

\begin{proof}
Let $\eps\in(0,1)$ be a constant small enough that $(1-\eps)(19p_e-16)>2$. Let $n\in\N$ and fix a model in $\cD(Q_n,p_v,p_e)$. Note that we may assume $n$ is large in terms of $p_v$, $p_e$, and~$\eps$. We will start by showing that if $u$ and $v$ are distinct vertices of $Q_n$ at distance at most 9 from one another such that $u$ and $v$ are $\eps$-edge-good vertices and $\{u,v\}$ is an $\eps$-edge-good pair, then $u$ and $v$ are in the same component in~$G$.

Indeed, let $u$ and $v$ be such vertices, and note that they are also both vertex-good. As $\{u,v\}$ is $\eps$-edge-good, there is a set $S$ of at least $(1-\eps)(15p_e-14)p_v^2n^2/4$ vertex-disjoint paths from $I(u)$ to $I(v)$ in $G$. Let $T_1$ be the set of pairs $(a,b)$ such that $a$ and $b$ are the endpoints of a path in $S$, where $a\in I(u)$ and $b\in I(v)$. Arbitrarily pair up the remaining elements of $I(u)$ and $I(v)$ and let the set of all these pairs be $T$, so that $|T|=\ceil{p_v^2n^2/4}$.

Let $T_2$ be the set of pairs in $T$ whose first entry has a path to $u$ in $G$ of length 2, and let $T_3$ be the set of pairs whose second entry has a path to $v$ in $G$ of length 2. Then, since $u$ and $v$ are $\eps$-edge-good vertices, we have $|T_2|,|T_3|\geq (1-\eps)(2p_e-1)p_v^2n^2/4$. Thus,
\begin{align*}
 |T_1|+|T_2|+|T_3|& \geq (1-\eps)[2(2p_e-1)+(15p_e-14)]p_v^2n^2/4\\
 & = (1-\eps)(19p_e-16)p_v^2n^2/4.
\end{align*}
By our choice of $\eps$, we have $(1-\eps)(19p_e-16)>2$, so if $n$ is large enough in terms of $p_v$, $p_e$, and $\eps$, then there exists a pair $(a,b)\in T_1\cap T_2\cap T_3$. It follows that $u$ and $v$ are in the same component in $G$ since there is a walk from one to the other via this $a$ and~$b$.

We now prove the following claim.
\begin{claim}\label{claim:good-within-4}
 With probability at least $1-e^{-\Omega(n^2)}$, every vertex in $Q_n$ has an $\eps$-edge-good vertex within distance 4.
\end{claim}
\begin{proof}
First note that any set of vertices in $Q_n$ which are pairwise at distance at least 5 from one another are $\eps$-edge-good independently. Fix a vertex $v$ in $Q_n$ and assume without loss of generality that $v=\textbf{0}$. By Theorem~\ref{thm:wilson} there exists a set $S$ of vertices at distance 4 from $v$ in $Q_n$ which are pairwise at distance at least 6 in $Q_n$ with $|S|=\Omega(n^2)$. The vertices in $S$ are $\eps$-edge-good independently, and by Lemma~\ref{lem:edge-good} (noting that we may assume that $n$ is large), there exists a constant $c>0$ such that each of them is $\eps$-edge-good with probability at least~$c$. Hence, the probability that at least one of the vertices in $S$ is $\eps$-edge-good is at least $1-e^{-\Omega(n^2)}$, from which the claim follows by a union bound.
\end{proof}
    
Now suppose that every pair of distinct, vertex-good vertices of $Q_n$ at distance at most 9 from one another form an $\eps$-edge-good pair and every vertex in $Q_n$ has an $\eps$-edge-good vertex within distance 4. We will show that under these assumptions, if $u$ and $v$ are distinct $\eps$-edge-good vertices of $Q_n$, then they are in the same component of $G$. Since our assumptions hold with probability $1-e^{-\Omega(n^2)}$, this will complete the proof.
    
Fix a path between $u$ and $v$ in $Q_n$, say $x_0x_1\dots x_k$ where $x_0=u$ and $x_k=v$. If $k \leq 9$, then $u$ and $v$ are two $\eps$-edge-good, and therefore vertex-good, vertices within distance~9. Hence, they form an $\eps$-edge-good pair and we are done by the first part of the proof. If instead $k > 9$, then each of $x_5,\dots,x_{k-5}$ are within distance 4 of $\eps$-edge-good vertices, say $y_5,\dots,y_{k-5}$ respectively. Then $x_0$ and $y_5$ are both vertex-good vertices, and since they are within distance 9 of each other, they form an $\eps$-edge-good pair. By the first part of the proof, they are in the same component in $G$. By similar logic, so are $x_k$ and $y_{k-5}$, and $y_i$ and $y_{i+1}$ for each $i\in\{5,\dots,k-6\}$. Hence, $u$ and $v$ are in the same component and we are done.
\end{proof}

We are now ready to prove Proposition~\ref{prop:cube}.

\begin{proof}[Proof of Proposition~\ref{prop:cube}]
By Lemma~\ref{lem:good-then-connected} it is sufficient to show that for all constants $\eps\in(0,1)$ there exists a positive constant $C$ such that for all $n\in\N$ and models in $\cD(Q_n,p_e,p_v)$, there are at least $C\cdot2^n$ $\eps$-edge-good vertices in $Q_n$ with probability at least $1-e^{-\Omega(n^2)}$. Fix an $\eps \in (0,1)$, let $n \geq 6$, and consider a model in $\cD(Q_n,p_e,p_v)$.

We observed in the proof of Claim~\ref{claim:good-within-4} that any set of vertices which are pairwise at distance at least 5 in $Q_n$ are $\eps$-edge-good independently of one another, and by Lemma~\ref{lem:edge-good} (noting that we may assume that $n$ is large) there exists a constant $c>0$ such that the probability that a given vertex of $Q_n$ is $\eps$-edge-good is at least~$c$. Consider the fifth power $Q_n^{(5)}$ of $Q_n$, that is the graph with vertex set $V(Q_n)$ and edges between vertices which are at distance at most 5 from one another in~$Q_n$. This graph is $\Delta$-regular for some $\Delta=O(n^5)$, so by Brooks' theorem we can properly vertex-colour $Q_n^{(5)}$ with $\Delta$ colours.

Each colour class consists of vertices which are pairwise at distance at least 5 from one another in~$Q_n$, so they are $\eps$-edge-good independently of one another. Hence, by a Chernoff bound, there exists a constant $C' = C'(p_v, p_e, \eps) >0$ such that if $D$ is a colour class with $|D|\geq n^2$, then at least $C'|D|$ of the vertices in $D$ are $\eps$-edge-good with probability at least $1-e^{-\Omega(n^2)}$. Since $\Delta=O(n^5)$ it follows by a union bound that, with probability at least $1-e^{-\Omega(n^2)}$, in every colour class of size at least $n^2$ at least a $C'$ proportion of the vertices are $\eps$-edge-good. The total number of vertices in colour classes of size less than $n^2$ is at most $\Delta \cdot n^2=O(n^7)$, so there exists a constant $C>0$ such that with probability at least $1-e^{-\Omega(n^2)}$, at least a $C$ proportion of the vertices of $Q_n$ are $\eps$-edge-good, as required.
\end{proof}

\section{Percolation in \texorpdfstring{$\Z^n$}{Z\textasciicircum{}n}}\label{sec:lattice}

In this section we build on the methods and results of Section~\ref{sec:cube} to prove Theorem~\ref{thm:lattice}. We also prove Lemma~\ref{lem:q6-p}, which we need in order the apply the theorem, in Section~\ref{sec:q3-p}. As was the case with Theorem~\ref{thm:cube}, our proof of Theorem~\ref{thm:lattice} rests on a renormalisation argument which reduces the problem to one concerning percolation models of the form given
in Definition~\ref{def:renormalised}. We now state the analogue of Proposition~\ref{prop:cube} for this setting, before proceeding to prove Theorem~\ref{thm:lattice} from the proposition by mimicking the proof of Theorem~\ref{thm:cube} from Proposition~\ref{prop:cube}. Following this we will prove the proposition.

\begin{proposition}\label{prop:lattice}
 Let $p_v\in (0,1]$ and\/ $p_e\in (\frac{18}{19},1]$ be constants. Then, for large enough~$n$, all models in $\cD(\Z^2\times\{0,1\}^{n-2},p_v,p_e)$ percolate.
\end{proposition}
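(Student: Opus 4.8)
The plan is to renormalise onto $\Z^2$ and then invoke the fact that the critical probability for $1$‑independent bond percolation on $\Z^2$ is bounded away from~$1$. For $z\in\Z^2$ write $C_z=\{z\}\times\{0,1\}^{n-2}$; this ``column'' is an induced copy of $Q_{n-2}$, and for adjacent $z,z'\in\Z^2$ the subgraph induced on $C_z\cup C_{z'}$ is precisely a copy of $Q_{n-1}$, namely $C_z$ and $C_{z'}$ joined by the identity matching. Since the restriction of a model in $\cD(\Z^2\times\{0,1\}^{n-2},p_v,p_e)$ to any induced subgraph is again a model in $\cD(\cdot,p_v,p_e)$ on that subgraph, every result of Section~\ref{sec:cube} applies verbatim to $G[C_z\cup C_{z'}]$.

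First I would fix the constant $\eps=\eps(p_v,p_e)\in(0,1)$ coming from Lemma~\ref{lem:good-then-connected}, and identify $C_z\cup C_{z'}$ with $Q_{n-1}$ so that the \emph{first} coordinate records which of the two columns a vertex lies in. Then for $(z,c)\in C_z$ the set $H_{(z,c)}$ from Section~\ref{sec:cube} is exactly $C_z$, and one checks that whether $(z,c)$ is $\eps$‑edge‑good in $G[C_z\cup C_{z'}]$ depends only on $G[C_z]$, not on the choice of neighbour~$z'$: the relevant second neighbourhood, the set $I((z,c))$, and all the length‑$2$ witnessing paths lie inside~$C_z$. Call $(z,c)$ \emph{good} if it is $\eps$‑edge‑good in this column‑intrinsic sense, and let $E^{*}_z$ be the (random) set of good vertices of~$C_z$. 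By Lemma~\ref{lem:edge-good}, applied to any such $Q_{n-1}$, each vertex of $C_z$ is good with probability at least a constant $c_0=c_0(p_v,p_e)>0$; as good vertices at pairwise distance at least~$5$ are good independently and there are $\Omega(n^2)$ such vertices (as in the proof of Proposition~\ref{prop:cube}), we get $E^{*}_z\neq\varnothing$ with probability $1-e^{-\Omega(n^2)}$.

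Next I would define a bond percolation model on $\Z^2$ by declaring the bond $zz'$ \emph{open} precisely when $E^{*}_z$ and $E^{*}_{z'}$ are both non‑empty and all good vertices of $C_z\cup C_{z'}$ lie in a single component of $G[C_z\cup C_{z'}]$. Applying Lemma~\ref{lem:good-then-connected} to $G[C_z\cup C_{z'}]\cong Q_{n-1}$ (this is exactly where the hypothesis $p_e>\tfrac{18}{19}$ is used), together with the previous paragraph, shows that each bond is open with probability $1-e^{-\Omega(n^2)}$. The state of $zz'$ is a function of the configuration on the subgraph induced by $V(C_z)\cup V(C_{z'})$, and two sets of bonds of $\Z^2$ that share no vertex correspond to disjoint vertex sets of $\Z^2\times\{0,1\}^{n-2}$, so the renormalised model is $1$‑independent. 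Since $\pmax(\Z^2)<1$ (by a standard Peierls argument, and quantitatively by Theorem~\ref{thm:z2-upper-bound}), for all large~$n$ this probability exceeds $\pmax(\Z^2)$, so the renormalised model percolates: almost surely there is an infinite connected set $\mathcal R\subseteq V(\Z^2)$ all of whose bonds are open. For each $z\in\mathcal R$ we have $E^{*}_z\neq\varnothing$, and for adjacent $z,z'\in\mathcal R$ the set $E^{*}_z\cup E^{*}_{z'}$ lies in one component of~$G$; chaining along the connected set $\mathcal R$, and using that the sets $E^{*}_z$ ($z\in\mathcal R$) are non‑empty and pairwise disjoint, these sets all lie in a single component of~$G$, which is therefore infinite. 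Hence every model in $\cD(\Z^2\times\{0,1\}^{n-2},p_v,p_e)$ percolates once $n$ is large enough.

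The step I expect to need the most care is the verification that ``good'' is a well‑defined, column‑local property: the chaining across $\mathcal R$ only makes sense because the notion of a good vertex of $C_z$ is literally identical whichever neighbour of $z$ is used to build the ambient $Q_{n-1}$, which is the purpose of taking the column index as the distinguished first coordinate and of the (routine but necessary) observation that all of the $\eps$‑edge‑good witnesses for a vertex of $C_z$ stay within $C_z$. A secondary point is the (straightforward) check that restrictions of $\cD(\cdot,p_v,p_e)$‑models to induced subgraphs remain in the class, which is what licenses quoting Lemmas~\ref{lem:edge-good} and~\ref{lem:good-then-connected} for the copies of $Q_{n-1}$.
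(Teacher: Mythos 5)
Your proposal is correct and follows essentially the same route as the paper: renormalise onto $\Z^2$ by declaring a bond $zz'$ open when both columns contain $\eps$-edge-good vertices and all such vertices of the pair of columns lie in one component, bound the bond probability via Lemmas~\ref{lem:edge-good} and~\ref{lem:good-then-connected}, invoke $\pmax(\Z^2)<1$, and chain components along the infinite renormalised cluster using the column-locality of $\eps$-edge-goodness. The point you flag as delicate (that goodness is intrinsic to the column, independent of which neighbouring column forms the ambient cube) is exactly the observation the paper relies on as well.
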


\begin{proof}[Proof of Theorem~\ref{thm:lattice}]
    To prove the theorem it is sufficient to show that if $n$ is large, then every 1-independent bond percolation model on $\Z^2\times\{0,1\}^{n-2}$ in which each edge is open with probability $p$ percolates. Let $n$ be large and let $\mu_{n}$ be such a model on $\Z^2\times\{0,1\}^{n-2}$. Define a percolation model $\mu_{n-k}$ on $\Z^2\times\{0,1\}^{n-k-2}$ by defining a vertex $v$ to be open if the hypercube $H_v = \{v\concat a:a\in\{0,1\}^k\}$ is connected under $\mu_n$, and defining an edge of $\Z^2\times\{0,1\}^{n-k-2}$ between two open vertices $u$ and $v$ to be open if at least one of the $2^k$ edges in $\Z^2\times\{0,1\}^{n-2}$ between $H_u$ and $H_v$ is open under $\mu_n$. For $i\in \{k,\dots,n-3\}$, recursively define a model $\mu_{n-i-1}$ on $\Z^2\times\{0,1\}^{n-i-3}$ by defining a vertex $v$ to be open if the edge between $v\concat0$ and $v\concat1$ is open in $\Z^2\times\{0,1\}^{n-i-2}$ under $\mu_{n-i}$, and setting an edge $uv$ to be open if the edge between $u\concat0$ and $v\concat0$ or the edge between $u\concat1$ and $v\concat1$ is open in $\Z^2\times\{0,1\}^{n-i-2}$ under $\mu_{n-i}$.

    Let $q = 1-p$, and define $q_i$, $s_i$, and $r_i$ for $i\geq k$ as in the proof of Theorem~\ref{thm:cube}. Then we again have that $r_i\rightarrow 0$ as $i\rightarrow \infty$ and, for all $k\leq i\leq n-2$, we have $s_i>0$ and $\mu_{n-i}\in\cD(\Z^2\times\{0,1\}^{n-i-2}, s_i, 1-r_i)$. Thus, we can take $I$ to be a constant large enough that $r_I<\frac{1}{19}$, so that for $n\geq I+2$ we have $\mu_{n-I}\in\cD(\Z^2\times\{0,1\}^{n-I-2}, s_I, 1-r_I)$ where $s_I>0$ and $1-r_I>\frac{18}{19}$, and Proposition~\ref{prop:lattice} applies. Hence, $\mu_{n-I}$ percolates. By the construction of $\mu_{n-I}$ from~$\mu_n$, the existence of an infinite component in $\Z^2\times\{0,1\}^{n-I-2}$ under $\mu_{n-I}$, implies the existence of an infinite component in $\Z^2\times\{0,1\}^{n-2}$ under $\mu_n$, and hence $\mu_n$ percolates, which completes the proof of the theorem.
\end{proof}

We now turn our attention to Proposition~\ref{prop:lattice}.

\begin{proof}[Proof of Proposition~\ref{prop:lattice}]
To simplify the notation, we will prove the equivalent statement that for large enough~$n$, all models in $\cD(\Z^2\times\{0,1\}^{n-1}, p_v, p_e)$ percolate. To begin, apply Lemma~\ref{lem:good-then-connected} to find $\eps\in(0,1)$ such that for each $n\in\N$, under every model in $\cD(Q_n,p_e,p_v)$, with probability at least $1-e^{-\Omega(n^2)}$ all $\eps$-edge-good vertices in $Q_n$ are in the same component. Now let $n$ be large and consider $\mu\in \cD(\Z^2\times\{0,1\}^{n-1}, p_v, p_e)$. Let $G$ be the random subgraph of $\Z^2\times\{0,1\}^{n-1}$ associated with~$\mu$. To each edge $e\in E(\Z^2)$, we associate the natural copy of $Q_n$ in $\Z^2\times\{0,1\}^{n-1}$, which we denote by~$C_e$. The subgraph of $G$ induced on $V(C_e)$ follows a model in $\cD(Q_n,p_v,p_e)$, so we can define a bond percolation model $\nu=\nu(\mu)$ on $\Z^2$ coupled to $\mu$ by declaring $e\in E(\Z^2)$ to be open if the following two conditions hold:
\begin{enumerate}[label=(\alph*)]
 \item in the subgraph of $G$ induced on $V(C_e)$, all $\eps$-edge-good vertices of $C_e$ are in the same component; and
 \item if we partition $V(C_e)$ into two equal parts according to the first two coordinates of the vertices, then there are $\eps$-edge-good vertices of $C_e$ in both halves.
\end{enumerate}

Since $\mu\in\cD(\Z^2\times\{0,1\}^{n-1}, p_v, p_e)$, it is clear that $\nu$ is a 1-independent bond percolation model on~$\Z^2$. By our choice of~$\eps$, for each edge $e$ condition (a) holds with probability at least $1-e^{-\Omega(n^2)}$. For condition (b), recall that sets of vertices of $Q_n$ which are pairwise at distance at least 5 are $\eps$-edge-good independently. It is straightforward to see that there exists such a subset of $V(C_e)$ in which $\Omega(n^2)$ vertices are taken from each half of the cube, so applying Lemma~\ref{lem:edge-good} and using a Chernoff bound on each half we find that for each edge $e$ condition (b) holds with probability at least $1-e^{-\Omega(n^2)}$.
Thus $\nu$ is a 1-independent bond percolation model on $\Z^2$ in which each edge is open with probability at least $1-e^{-\Omega(n^2)}$. Since $\pmax(\Z^2)<1$, for example by Theorem~\ref{thm:z2-upper-bound} or as shown in~\cite{balister2005continuum} and in~\cite{liggett1997domination}, we see that $\nu$ percolates if $n$ is large enough in terms of $p_v$ and $p_e$.

It remains to show that percolation of $\nu$ implies percolation of~$\mu$. We claim that for any collection of connected, open edges under~$\nu$, the $\eps$-edge-good vertices in the associated hypercubes are in the same component under~$\mu$. An infinite component under $\nu$ contains infinitely many disjoint edges and the hypercubes of each of these must contain two $\eps$-edge-good vertices, so this immediately implies that $\mu$ percolates when $\nu$ percolates.
To prove the claim, suppose that $v\in V(\Z^2\times\{0,1\}^{n-1})$ is $\eps$-edge-good when considered as a vertex in $C_e$ for some $e\in E(\Z^2)$. Let $f$ be another edge of $\Z^2$ such that $v\in V(C_f)$ (so that $e$ and $f$ have a common vertex in $\Z^2$). Since the definition of a vertex in a copy of $Q_n$ being $\eps$-edge-good depends only on the $Q_{n-1}$ subgraph with the same first coordinate as that vertex, it follows that $v$ is also $\eps$-edge-good when considered as a vertex of $C_f$. Now if $e$ and $f$ are distinct open edges of $\Z^2$ with a common vertex, then by condition (b) applied to $e$ or $f$ there exists an $\eps$-edge-good vertex in $V(C_e) \cap V(C_f)$. By condition (a) applied to $e$ and $f$, all the $\eps$-edge-good vertices in $V(C_e)\cup V(C_f)$ are in the same component in~$G$, and the claim follows immediately.
\end{proof}

\subsection{Proof of Lemma~\ref{lem:q6-p}}\label{sec:q3-p}

We conclude this section with the proof of Lemma~\ref{lem:q6-p}, which allows us to extract concrete bounds from Theorems~\ref{thm:lattice} and~\ref{thm:cube}.

\begin{proof}[Proof of Lemma~\ref{lem:q6-p}]
Given a model in $\cD_{\geq 0.5847}(Q_6)$, we start by constructing a model in $\cD(Q_3,p_v,p_e)$ for some $p_v,p_e\in(0,1]$ by applying the first step (with $k=3$) of the renormalisation process used in the proof of Theorem~\ref{thm:cube}. That is, we declare a vertex $v$ of $Q_3$ to be open if the cube $\{v\concat a:a\in\{0,1\}^3\}$ is connected under the model on $Q_6$, and an edge of $Q_3$ between two open vertices to be open if at least one of the eight edges between the two cubes corresponding to its endpoints is open. If $P_0>0$ is a constant such that in any model in $\cD_{\geq 0.5847}(Q_3)$ the probability that the graph is connected is at least $P_0$, then by Claim~\ref{claim:renorm} the renormalised model on $Q_3$ is in $\cD(Q_3,P_0,1-0.4153^8/P_0^2)$, where $0.4153=1-0.5847$.
    
It is clear that this renormalisation has the property that if the random subgraph associated to the renormalised model forms a connected spanning subgraph of $Q_3$, then the random subgraph of $Q_6$ associated to the original model is connected. Moreover, if we condition on all sites being open in the renormalised model, then the bonds follow a model in $\cD_{\geq1-0.4153^8/P_0^2}(Q_3)$. Thus if $P_1>0$ is a constant such that for any model in this class, the probability that the random subgraph of $Q_3$ is connected is at least $P_1$, then the probability that $Q_6$ is connected under the original model is at least $P_0^8P_1$.
    
In order to obtain suitable values for $P_0$ and $P_1$, we construct a linear program that is satisfied by any 1-independent model on $Q_3$ (with edge probability exactly $p$) by removing the non-linear conditions.
More specifically, for each subset $S$ of the edges of $Q_3$, let $x_S$ denote the probability that the set of open edges is exactly $S$, and $y_S$ denote the probability that $S$ is a subset of the set of open edges.
Let $\cC$ be the collection of all subsets $S$ of $E(Q_3)$ that form a connected spanning subgraph of $Q_3$. We consider the following linear programming problem.
\[\begin{array}{lr@{\ }c@{\ }l}
 \text{Minimise: }   &\sum_{S\in\cC}x_S\kern-0.9cm \\[6pt]
 \text{Subject to: }
 &x_S            &\ge& 0,\\
 &y_S            &=&\sum_{T\supseteq S}x_T,\\
 &y_{S\cup\{e\}} &=&p\cdot y_S,\\
 &y_{\emptyset}  &=&1,
\end{array}\]
where $S$ runs over all subsets of $E(Q_3)$ and
$e$ runs over all edges that are vertex-disjoint from
all edges in $S$.

It is clear that the above conditions hold in any 1-independent model with edge probability~$p$. Indeed, the only conditions missing are the non-linear constraints $y_{S\cup T}=y_Sy_T$ when $S$ and $T$ are sets of edges
sharing no vertex and $|S|,|T|\ge2$. Thus the solution
to the linear programming problem gives a lower
bound on the minimum probability that the open
edges in any 1-independent bond percolation model on $Q_3$ in which edges are open with probability $p$
form a connected spanning subgraph of~$Q_3$. As the existence of a spanning connected open subgraph is an increasing event, this bound also holds when the edges are open with probability at least~$p$ as we can independently delete edges so as to ensure edges are open with probability exactly~$p$.
    
Running the above LP problem using the \texttt{Gurobi}
optimisation package and $p=0.5847$ gave a lower
bound\footnote{We also ran the dual programs and checked that the dual solutions provided by \texttt{Gurobi} were feasible.} on $P_0$ of
$0.0463$. Running the LP problem again with $p=0.5872<1-0.4153^8/0.0463^2$ gave a lower bound on $P_1$ of $0.0497$. Hence we may take $P=10^{-12}<0.0497\cdot(0.0463)^8$ as a lower bound on the probability of $Q_6$ being connected under any model in $\cD_{\geq 0.5847}(Q_6)$, and $P>9.93\times 10^{-13}>0.4153^{32}\cdot\varphi$ as required.
\end{proof}

\section{Lower bounds on \texorpdfstring{$\pmax(\Z^2)$}{pmax(Z\textasciicircum{}2)}}\label{sec:z2-lower-bounds}

In this section we detail the proof of Theorem~\ref{thm:z2-lower-bound} and justify Result~\ref{res:high-conf}, starting with the former.

\begin{proof}[Proof of Theorem~\ref{thm:z2-lower-bound}.]
Let $p>\psite=\psite(\Z^2)$ and, to each vertex $(i,j)\in\Z^2$ with $i+j\equiv 0\bmod2$, assign independent random variables
$X_{i,j}\in\{\text{A},\text{U},\text{D},\text{L},\text{R}\}$ which take the value `$\text{A}$' with probability $1-p$ and
each of the other four values with probability~$p/4$.
We add edges according to the following rules, and leave all other edges closed.
\begin{itemize}[noitemsep]
\item If $X_{i,j}=\text{U}$, we add the edge from $(i,j)$ to $(i,j+1)$.
\item If $X_{i,j}=\text{D}$, we add the edge from $(i,j)$ to $(i,j-1)$.
\item If $X_{i,j}=\text{L}$, we add the edge from $(i,j)$ to $(i-1,j)$.
\item If $X_{i,j}=\text{R}$, we add the edge from $(i,j)$ to $(i+1,j)$.
\item If $X_{i,j}=\text{A}$, we add all four edges incident to $(i,j)$.
\end{itemize}
This clearly gives a 1-independent bond percolation model on $\Z^2$ and, moreover, any infinite open
path must, on alternate internal vertices, pass through vertices in the even sublattice $\{(i,j):i+j\equiv0\bmod 2\}$
at vertices where $X_{i,j}=\text{A}$ (as $(i,j)$ has degree 1 when $X_{i,j}\ne\text{A}$).
Thus, we must have an infinite component in the `diagonally connected'
even sublattice, where each vertex is adjacent to its eight nearest neighbours. It is well known that site percolation on this
lattice satisfies a `duality' condition with site percolation on the usual $\Z^2$ lattice and that the percolation threshold
is $1-\psite$ (see, for example,~\cite{bollobas2006percolation}). As $p>\psite$ and $\Prb(X_{i,j}=\text{A})=1-p$, we almost
surely do not have such an infinite component, and hence this model does not percolate.

In this model, each edge is open with probability $\frac{p}{4}+(1-p)=1-\frac{3}{4}p$, so
\[
 \pmax(\Z^2)\ge 1-\tfrac{3}{4}\psite.
\]
Now, as noted in the introduction, Day, Falgas-Ravry, and Hancock~\cite{day2020long} showed that
\[
 \pmax(\Z^2)\ge \psite^2+\tfrac{1}{2}(1-\psite).
\]
Hence, \emph{independently} of any assumption on the value of $\psite$, we have
\[
 \pmax(\Z^2)\ge \inf_{x\in[0,1]}\max\big\{1-\tfrac34x,x^2+\tfrac12(1-x)\big\}
 =\tfrac{1}{32}(35-3\sqrt{33})\approx 0.555197.\qedhere
\]
\end{proof}

Note that if we use the conjectured value $\psite\approx 0.592746$ we get
the slightly better bound $\pmax(\Z^2)\ge 0.555440$. However, an even better
non-rigorous bound is given by the following model.

Let each site $v$ be given a state $X_v\in \{\text{U},\text{D},\text{L},\text{R}\}$ independently at random with
$\Prb(X_v=\text{U})=\Prb(X_v=\text{R})=\frac{1}{2}\theta$, and $\Prb(X_v=\text{D})=\Prb(X_v=\text{L})=\frac{1}{2}(1-\theta)$,
where $\theta\in(0,1)$. We join neighbouring sites $u = (i,j)$ and $v$ if {\em any} of the following hold.
\begin{itemize}[noitemsep]
 \item $X_u=X_v$;
 \item $X_u=\text{U}$ and $v = (i,j+1)$ is above $u$;
 \item $X_u=\text{D}$ and $v = (i,j-1)$ is below $u$;
 \item $X_u=\text{L}$ and $v = (i-1,j)$ is to the left of $u$;
 \item $X_u=\text{R}$ and $v = (i+1,j)$ is to the right of $u$.
\end{itemize}
In other words, sites are joined if their states are equal, but a site in state $d$ also forces
an edge to its neighbour in direction $d$ regardless of the neighbour's state.

If $v$ is to the right of $u$ we have
\begin{align*}
 \Prb(uv\text{ is open})
 &=\Prb(X_u=X_v=\text{U})+\Prb(X_u=X_v=\text{D})+\Prb(X_u=\text{R or }X_v=\text{L})\\
 &=\tfrac{1}{4}\theta^2+\tfrac{1}{4}(1-\theta)^2+\tfrac{1}{2}\theta+\tfrac{1}{2}(1-\theta)-\tfrac{1}{4}\theta(1-\theta)=\tfrac{3}{4}(\theta^2-\theta+1).
\end{align*}
A similar calculation holds for vertical bonds and so every edge is present with probability $p=\frac{3}{4}(\theta^2-\theta+1)$. This is clearly a 1-independent model, and numerical simulations suggest the threshold for percolation (above which the model percolates) is at $p\approx0.592119$.

We provide a high confidence result that this model does not percolate for $p$ just above $0.5921$, implying that $\pmax(\Z^2)>0.5921$. See, for example,~\cite{balister2005continuum} or~\cite{riordan2007rigorous} for more examples of this method.
We use a renormalisation argument, constructing a new 1-independent model on $\Z^2$ from the model described above by identifying renormalised sites $(x,y)$ with $N\times N$ blocks of sites $(Nx,Ny)+\{0,\dots,N-1\}^2$ in the original model. A renormalised bond $uv$, which corresponds to a $2N\times N$ (or $N\times 2N$) rectangle, will be open if some event $E_{uv}$ holds, and this event will depend only on the sites and bonds within the corresponding rectangle to ensure 1-independence of the renormalised model. These events will be chosen so that if there is an open cycle $C$ enclosing a point $v$ in the renormalised model, then in the original model the open component containing the point $v$ is contained within the bounded region enclosed by the blocks corresponding to $C$.

If $\Prb(E_{uv})>0.8457$ for all edges $uv$, then
by Corollary~\ref{cor:z2strong} below there are almost surely cycles enclosing any bounded region in the plane. Hence, no point is in an infinite component in the original model, and so the original model does not percolate. In principle, it is possible to calculate $\Prb(E_{uv})$
exactly, but this is usually impractical unless $N$
is very small. Instead, we use Monte Carlo simulations
to test the hypothesis that $\Prb(E_{uv})>0.8457$.
We run $T$ trials, constructing a pseudorandom instance
of the states in a $2N\times N$ rectangle then determining
whether or not $E_{uv}$ holds. If $\Prb(E_{uv})\le 0.8457$, then with probability at most $p=\Prb(\mathrm{Bin}(T,0.8457)\ge k)$ will we have
$k$ or more successful trials. Thus, if we obtain $k$
successful trials and $p$ is small, then we can say with high confidence that $\Prb(E_{uv})>0.8457$.
Other than a very unfortunate coincidence occurring
in the simulation, there are two possible reasons this may fail. One is that the pseudorandom number
generator we use may not be sufficiently random, and the other
is possible errors in the computer software. To mitigate these errors, we performed two different
experiments with different software, different choices of $E_{uv}$,
and different pseudorandom number generators.

Before giving the details of the first experiment, we recall that the dual graph of the $\Z^2$ lattice has sites corresponding to the
square faces of the lattice, and bonds
joining these faces are open exactly when the unique bond of
the original lattice crossing them is closed. In the first experiment, which closely follows the
method in~\cite{riordan2007rigorous}, $E_{uv}$ is the event that
\begin{itemize}
 \item there is a unique largest connected component $C_u$ in the $(N-1)\times (N-1)$ dual graph in the block corresponding to $u$;
 \item there is a unique largest connected component $C_v$ in the $(N-1)\times (N-1)$ dual graph in the block corresponding to $v$; and
 \item $C_u$ and $C_v$ both lie in the same connected component of the $(2N-1)\times (N-1)$ dual graph in the $2N\times N$ rectangle corresponding to the bond $uv$.
\end{itemize}
Note that since we want $E_{uv}$ to depend only on the model inside
the $2N\times N$ rectangle corresponding to $uv$, we are
restricted to only considering dual bonds in a $(2N-1)\times (N-1)$ rectangle. It is straightforward to see that these events $E_{uv}$ have the required property that the existence of an open cycle enclosing a region in the renormalised model implies the existence of an open cycle enclosing the region in the dual graph of the original model, and hence any open component of the original model meeting this region must be finite.

Apart from the choice of random
number generator, the algorithm for finding the
component structure in the dual graph is the same as
in~\cite{riordan2007rigorous}. In particular, states are revealed
column by column, and the component structure is updated accordingly.
Thus only states in two or three
columns are retained, reducing memory requirements
to $O(N)$ (rather than storing all the states which
would use $O(N^2)$ memory). 
We used a 16-bit version of the \texttt{arc4}
algorithm
to supply the random numbers (see, for example, \cite{rivest2016spritz} for an overview of the algorithm).\footnote{While the \texttt{arc4} algorithm is known to have some biases (see, for example, \cite{fluhrer2001statistical,mantin2005predicting}), by discarding the initial outputs and using 16-bits rather than 8-bits,
all known biases in \texttt{arc4} are sufficiently small
so as to not affect the simulations.}
Taking
$N=1{,}200{,}000$ and $T=300$ we obtained
296 successes, giving a $p$-value of less than $10^{-16}$.

The second experiment follows a method suggested (but not used) in~\cite{riordan2007rigorous}. For a horizontal edge $uv$ with $u$ to the left of $v$, $E_{uv}$ is the event that
\begin{itemize}
 \item there is no open path from the bottom to the top of the $2N\times N$ rectangle corresponding to the edge $uv$; and
 \item there is no open path from the left to the right of the $N\times N$ square corresponding to $u$.
\end{itemize}
Reflecting in the line $x=y$ gives
a corresponding definition for vertical edges.
By symmetry these have the same probability, $\Prb(E_{uv})$. Again, it is not difficult to see that these events have the required properties.

To determine if an open path crosses a square or rectangle
we use a boundary following algorithm where we
explore the boundary of the sites connected to
the bottom of the rectangle (or left side of the square). One advantage of this algorithm is that it can be much faster
as it only needs to determine the states of
a small fraction of the sites. The disadvantage is
that we now need a pseudorandom \emph{function} which can
generate consistent random data for all sites $(x,y)$
accessed in any order (rather than a sequential list
of random numbers as is more usual in a random number
generator). For this we used the 20-round \texttt{chacha}
algorithm introduced in \cite{bernstein2008chacha} to generate our random states. We again set
$N=1{,}200{,}000$ and performed 300 trials.
There were 291 successes, giving a $p$-value of less than
$10^{-11}$.

The code for both of these experiments is attached to the \href{https://arxiv.org/abs/2206.12335}{arXiv} submission.

\section{Upper bounds on \texorpdfstring{$\pmax(\Z^2)$}{pmax(Z\textasciicircum{}2)}} \label{sec:z2-upper-bound}

We now prove Theorem~\ref{thm:z2-upper-bound} which gives an improved upper bound on $\pmax(\Z^2)$. We will apply a renormalisation argument and use linear programs to lower bound the probability that each edge is present. 

\begin{proof}[Proof of Theorem~\ref{thm:z2-upper-bound}.]
We follow the basic renormalisation idea used in~\cite{balister2005continuum}. Tile $\Z^2$ with $2\times 2$ squares $S_{(i,j)}=\{2i,2i+1\}\times\{2j,2j+1\}$ and form a new 1-independent model on~$\Z^2$ by identifying
$S_{(i,j)}$ with the site $(i,j)\in\Z^2$ and joining
neighbouring sites $u,v\in\Z^2$ if a certain `good event' $E_{uv}$ occurs in the corresponding $4\times 2$ or $2\times 4$ rectangle $S_u\cup S_v$ in the original 1-independent model. The good event $E_{uv}$ will be defined to be the existence of an open component in the rectangle $S_u\cup S_v$ that has a `large' intersection with both $S_u$ and $S_v$. As long as any two `large' subsets of any $S_u$ intersect, these events will have the property that an open path from some $v$ to some $w$ in the renormalised grid will imply the existence of an open path from some site in $S_v$ to some site in $S_w$ in the original model.

Clearly containing at least three out of four sites in $S_u$ would be a sufficient condition to be `large', but we can do better. Fix for each $u\in\Z^2$ a subset $\cL_u\subseteq S_u$ of size~3, i.e., one of the subsets shown in Figure~\ref{fig:s_ij}. We define a large subset of $S_u$ to be one that intersects $\cL_u$ in at least two sites. Note that this includes any set containing three or more sites in~$S_u$, but certain 2-element subsets of $S_u$ are now also deemed large and, of course, any two large subsets of $S_u$ intersect.

We now come to the choice of $\cL_u$. These could be chosen to be all (translates of) the same fixed set, but we can improve the bound slightly by having $\cL_u$ vary, and indeed vary randomly. Hence, we shall choose $\cL_u$ randomly, independently for each~$u$, and also independently of the state of the original model. We note that under these assumptions, the renormalised model is still 1-independent as vertex-disjoint sets of renormalised edges depend on vertex-disjoint subgraphs of the original model, and on the choice of $\cL_u$ for disjoint sets of sites~$u$. 

Even the probability distributions for the choice of $\cL_u$ will vary, depending on~$u$, so our renormalised bonds will not be open with the same probabilities. Hence to inductively renormalise we will need to consider 1-independent models with differing edge probabilities for each edge. However, it will be enough for our purposes to consider models with at most two distinct edge probabilities. More precisely, define $\cD_{p,p'}$ as the set of 1-independent bond percolation models on $\Z^2$ in which each edge $uv$ is open with probability $p$ if $u$ and $v$ both lie in the same $2\times2$ square $S_w$, and $p'$ otherwise (see Figure~\ref{fig:z2-upper-bound}). Similarly we define $\cD_{\ge p,\ge p'}$ as the set of 1-independent models where the edge probabilities are at least $p$ and $p'$ respectively.

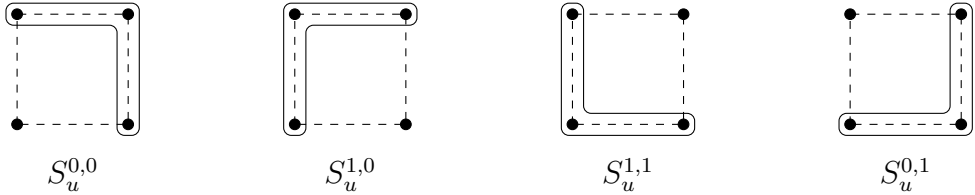
\begin{figure}[t]
\centering
\tikzmath{\gap = 1.5;}
\tikzmath{\width = 4.2 + 3*\gap;}
\begin{tikzpicture}[scale=0.8*(\linewidth/1cm)/\width]
\begin{scope}[shift={(0,0)}]
 \draw node[style=vertex] at (0,0) {};
 \draw node[style=vertex] at (0,1) {};
 \draw node[style=vertex] at (1,0) {};
 \draw node[style=vertex] at (1,1) {};
 \draw[dashed] (0,0) -- (0,1) -- (1,1) -- (1,0) -- (0,0);
 \draw[rounded corners = 0.1cm] (1.1, 0.1) -- (1.1, -0.1) -- (0.9, -0.1) -- (0.9,0.9) -- (-0.1, 0.9) -- ( -0.1, 1.1) -- (1.1, 1.1) -- (1.1, 0.1);
 \node[below=0.3cm] at (0.5, 0) {$S_u^{0,0}$};
\end{scope}
\begin{scope}[shift={(1+\gap,0)}]
 \draw node[style=vertex] at (0,0) {};
 \draw node[style=vertex] at (0,1) {};
 \draw node[style=vertex] at (1,0) {};
 \draw node[style=vertex] at (1,1) {};
 \draw[dashed] (0,0) -- (0,1) -- (1,1) -- (1,0) -- (0,0);
 \draw[rounded corners = 0.1cm] (-0.1, 0.1) -- (-0.1, 1.1) -- (1.1, 1.1) -- (1.1, 0.9) -- (0.1,0.9) -- (0.1, -0.1) -- (-0.1, -0.1) -- (-0.1, 0.1);
 \node[below=0.3cm] at (.5, 0) {$S_u^{1,0}$};
\end{scope}
\begin{scope}[shift={(2+2*\gap,0)}]
 \draw node[style=vertex] at (0,0) {};
 \draw node[style=vertex] at (0,1) {};
 \draw node[style=vertex] at (1,0) {};
 \draw node[style=vertex] at (1,1) {};
 \draw[dashed] (0,0) -- (0,1) -- (1,1) -- (1,0) -- (0,0);
 \draw[rounded corners = 0.1cm] (-0.1, 0.1) -- (-0.1, 1.1) -- (0.1, 1.1) -- (0.1,0.1) -- (1.1, 0.1) -- (1.1, -0.1) -- (-0.1, -0.1) -- (-0.1, 0.1);
 \node[below=0.3cm] at (.5, 0) {$S_u^{1,1}$};
\end{scope}
\begin{scope}[shift={(3+3*\gap,0)}]
 \draw node[style=vertex] at (0,0) {};
 \draw node[style=vertex] at (0,1) {};
 \draw node[style=vertex] at (1,0) {};
 \draw node[style=vertex] at (1,1) {};
 \draw[dashed] (0,0) -- (0,1) -- (1,1) -- (1,0) -- (0,0);
 \draw[rounded corners = 0.1cm] (0.1, -0.1) -- (-0.1, -0.1) -- (-0.1, 0.1) -- (0.9,0.1) -- (0.9, 1.1) -- (1.1, 1.1) -- (1.1, -0.1) -- (0.1, -0.1);
 \node[below=0.3cm] at (.5, 0) {$S_u^{0,1}$};
 \end{scope}
\end{tikzpicture}
\caption{The sets $S_u^{r,s}$} 
\label{fig:s_ij}
\end{figure}

We now define the precise choice of probability distribution for the $\cL_u$. For $r,s\in \{0,1\}$ let
\[
 S_{(i,j)}^{r,s}=S_{(i,j)}\setminus \{(2i+r,2j+s)\},
\]
as shown in Figure~\ref{fig:s_ij}.
Now fix  $\theta \in[0,1]$ and assume $u\in\Z^2$
with $u\equiv(r,s)\bmod 2$. Then define $\cL_u$ so that
\[
 \cL_u=
 \begin{cases}S_u^{r,s}&\text{with probability }1-\theta,\\
 S_u^{1-r,1-s}&\text{with probability }\theta.\\
 \end{cases}
\]
Figure~\ref{fig:z2-upper-bound} shows the situation when the first case always holds (i.e., when $\theta=0$). An equivalent definition is that we initially set $\cL_u=S_u^{r,s}$, and then independently with probability $\theta$ rotate each $\cL_u$ by $180^\circ$.

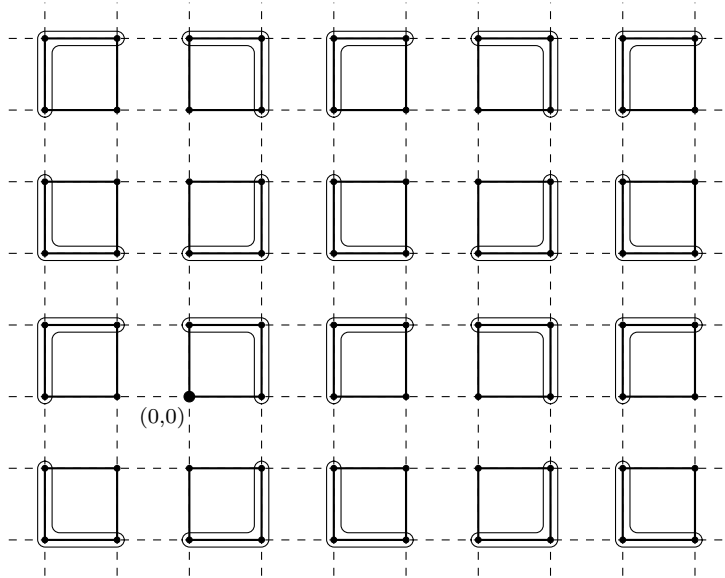
\begin{figure}[t]
\centering
\begin{tikzpicture}[scale=0.95]
 \clip (-0.5, -0.5) rectangle (9.5,7.5);
 \foreach \x in {0,...,7}
  \draw [dashed] (-0.5,\x)--(9+0.5,\x);
 \foreach \y in {0,...,9}
  \draw [dashed] (\y,-0.5)--(\y,0.5+7);
 \foreach \x in {0,...,9} \foreach \y in {0,...,7}
  \draw[fill] (\x,\y) circle (0.04);
 \foreach \x in {0,2,...,8} \foreach \y in {0,2,...,6}
  \draw[thick] (\x,\y) rectangle +(1,1);
 \foreach \x in {0,4,...,8} \foreach \y in {0,4,...,4} {
  \draw[rounded corners = 0.1cm] (\x+3.1,\y+2.1) -- (\x+3.1,\y+1.9) -- (\x+2.9,\y+1.9) -- (\x+2.9,\y+2.9) -- (\x+1.9,\y+2.9) -- (\x+1.9,\y+3.1) -- (\x+3.1,\y+3.1) -- (\x+3.1,\y+2.1);
  \draw[rounded corners = 0.1cm] (\x-0.1,2.1+\y) -- (\x-0.1,3.1+\y) -- (\x+1.1,3.1+\y) -- (\x+1.1,2.9+\y) -- (\x+0.1,2.9+\y) -- (\x+0.1,1.9+\y) -- (\x-0.1,1.9+\y) -- (\x-0.1,2.1+\y);
  \draw[rounded corners = 0.1cm] (\x-0.1,0.1+\y) -- (\x-0.1,1.1+\y) -- (\x+0.1,1.1+\y) -- (\x+0.1,0.1+\y) -- (\x+1.1,0.1+\y) -- (\x+1.1,-0.1+\y) -- (\x-0.1,-0.1+\y) -- (\x-0.1,0.1+\y);
  \draw[rounded corners = 0.1cm] (\x+2.1,-0.1+\y) -- (\x+1.9,-0.1+\y) -- (\x+1.9,0.1+\y) -- (\x+2.9,0.1+\y) -- (\x+2.9,1.1+\y) -- (\x+3.1,1.1+\y) -- (\x+3.1,-0.1+\y) -- (\x+2.1,-0.1+\y); }
 \node[style=vertex] at (2,2) {};
 \node[below left] at (2,2) {$\scriptstyle(0,0)\!\!$};
\end{tikzpicture}
\caption{A portion of the graph $\Z^2$ with the regions corresponding to the $\cL_u$ when $\theta=0$ highlighted. Dashed edges represent edges open with probability~$p'$, other edges are open with probability~$p$.}
\label{fig:z2-upper-bound}
\end{figure}

Now consider the $4\times 2$ rectangle $R = \{0,1,2,3\}\times\{0,1\}$ and let $E$ be the set of ten edges it induces. Let $\cC_0$ be the set of subsets of $E$ which give an open component containing at least two elements of each of $S_{(0,0)}^{0,1}$ and $S_{(1,0)}^{1,1}$, let $\cC_1$ be the analogous set for $S_{(0,0)}^{1,0}$ and $S_{(1,0)}^{1,1}$, $\cC_2$ the analogous set for $S_{(0,0)}^{0,1}$ and $S_{(1,0)}^{0,0}$, and $\cC_3$ the analogous set for $S_{(0,0)}^{1,0}$ and $S_{(1,0)}^{0,0}$. See Figure~\ref{fig:z2-rand-upper-bound} for an illustration of the target sets.

\begin{figure}
 \centering
 \tikzmath{\hgap = 1.5;}
 \tikzmath{\vgap = 2;}
 \tikzmath{\width = 6.2 + \vgap;}
 \begin{tikzpicture}[scale= 0.7*(\linewidth/1cm)/\width]
  \begin{scope}[shift={(0,0)}]
   \foreach \x in {0,...,3} {
    \draw node[style=vertex] at (\x,0) {};
    \draw node[style=vertex] at (\x,1) {};
    \draw (\x,0) -- (\x,1);
   }
   \draw (0,0) -- (1,0) (2,0) -- (3,0);
   \draw (0,1) -- (1,1) (2,1) -- (3,1);
   \draw[dashed] (1,0) -- (2,0) (1,1) -- (2,1);
   \draw[rounded corners = 0.1cm] (0.1 , -0.1) -- (-0.1, -0.1) -- (-0.1, 0.1) -- (0.9,0.1) -- ( 0.9, 1.1) -- ( 1.1 , 1.1) -- (1.1 , -0.1) -- (0.1, -0.1);
   \draw[rounded corners = 0.1cm] (1.9, 0.1) -- (1.9, 1.1) -- ( 2.1, 1.1) -- (2.1,0.1) -- ( 3.1, 0.1) -- ( 3.1, -0.1) -- (1.9, -0.1) -- (1.9, 0.1);
   \node[below=0.25cm] at (1.5, 0) {$\cC_0$};
  \end{scope}
  \begin{scope}[shift={(3+\hgap,0)}]
   \foreach \x in {0,...,3} {
    \draw node[style=vertex] at (\x,0) {};
    \draw node[style=vertex] at (\x,1) {};
    \draw (\x,0) -- (\x,1);
   }
   \draw (0,0) -- (1,0) (2,0) -- (3,0);
   \draw (0,1) -- (1,1) (2,1) -- (3,1);
   \draw[dashed] (1,0) -- (2,0) (1,1) -- (2,1);
   \draw[rounded corners = 0.1cm] (-0.1, 0.1) -- (-0.1, 1.1) -- (1.1, 1.1) -- (1.1, 0.9) -- (0.1,0.9) -- (0.1, -0.1) -- (-0.1, -0.1) -- (-0.1, 0.1);
   \draw[rounded corners = 0.1cm] (1.9, 0.1) -- (1.9, 1.1) -- ( 2.1, 1.1) -- (2.1,0.1) -- ( 3.1, 0.1) -- ( 3.1, -0.1) -- (1.9, -0.1) -- (1.9, 0.1);
   \node[below=0.25cm] at (1.5, 0) {$\cC_1$};
  \end{scope}
  \begin{scope}[shift={(0,-\vgap)}]
   \foreach \x in {0,...,3} {
    \draw node[style=vertex] at (\x,0) {};
    \draw node[style=vertex] at (\x,1) {};
    \draw (\x,0) -- (\x,1);
   }
   \draw (0,0) -- (1,0) (2,0) -- (3,0);
   \draw (0,1) -- (1,1) (2,1) -- (3,1);
   \draw[dashed] (1,0) -- (2,0) (1,1) -- (2,1);
   \draw[rounded corners = 0.1cm] (0.1 , -0.1) -- (-0.1, -0.1) -- (-0.1, 0.1) -- (0.9,0.1) -- ( 0.9, 1.1) -- ( 1.1 , 1.1) -- (1.1 , -0.1) -- (0.1, -0.1);
   \draw[rounded corners = 0.1cm] (2.1, 1.1) -- (3.1, 1.1) -- (3.1, -0.1) -- (2.9, -0.1) -- (2.9,0.9) -- (1.9, 0.9) -- (1.9, 1.1) -- (2.1, 1.1);
   \node[below=0.25cm] at (1.5, 0) {$\cC_2$};
  \end{scope}
  \begin{scope}[shift={(3+\hgap,-\vgap)}]
   \foreach \x in {0,...,3} {
    \draw node[style=vertex] at (\x,0) {};
    \draw node[style=vertex] at (\x,1) {};
    \draw (\x,0) -- (\x,1);
   }
   \draw (0,0) -- (1,0) (2,0) -- (3,0);
   \draw (0,1) -- (1,1) (2,1) -- (3,1);
   \draw[dashed] (1,0) -- (2,0) (1,1) -- (2,1);
   \draw[rounded corners = 0.1cm] (-0.1, 0.1) -- (-0.1, 1.1) -- (1.1, 1.1) -- (1.1, 0.9) -- (0.1,0.9) -- (0.1, -0.1) -- (-0.1, -0.1) -- (-0.1, 0.1);
   \draw[rounded corners = 0.1cm] (2.1, 1.1) -- (3.1, 1.1) -- (3.1, -0.1) -- (2.9, -0.1) -- (2.9,0.9) -- (1.9, 0.9) -- (1.9, 1.1) -- (2.1, 1.1);
   \node[below=0.25cm] at (1.5, 0) {$\cC_3$};
  \end{scope}
 \end{tikzpicture}
 \caption{The configurations corresponding to the definitions of $\cC_0$, $\cC_1$, $\cC_2$, and  $\cC_3$.}
\label{fig:z2-rand-upper-bound}
\end{figure}
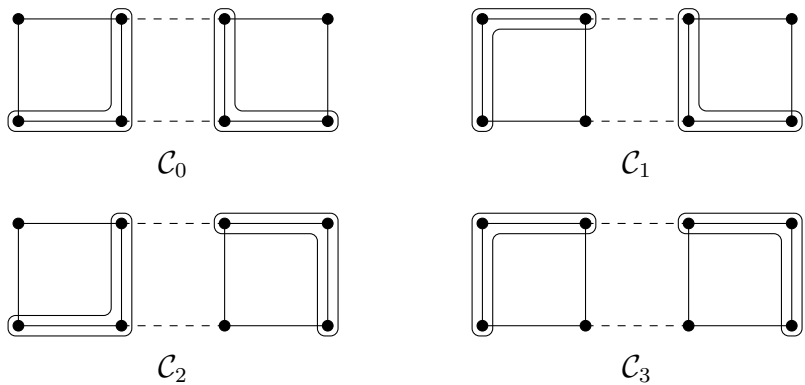

Given a bond $uv$ in the renormalised model, the relative states of $\cL_u$ and $\cL_v$ correspond
(up to symmetry) to one of the $\cC_i$. More specifically, if $uv$ lies in one of the $2\times2$ squares tiling $\Z^2$ in the renormalised model, then the pair
$(\cL_u,\cL_v)$ corresponds to the configurations $\cC_0$, $\cC_1$, $\cC_2$, $\cC_3$ with probabilities $(1-\theta)^2$, $\theta(1-\theta)$, $\theta(1-\theta)$, and $\theta^2$ respectively. Indeed, the default ($\theta=0$) case is just $\cC_0$ (up to symmetry) and then independently with probability $\theta$ we rotate each side by $180^\circ$. On the other hand, if
$uv$ joins two renormalised $2\times2$ squares then
in the default $\theta=0$ case we get a configuration that is $\cC_3$ up to symmetry. Thus in general we obtain configurations $\cC_0$, $\cC_1$, $\cC_2$, $\cC_3$ with probabilities $\theta^2$, $\theta(1-\theta)$, $\theta(1-\theta)$, and $(1-\theta)^2$ respectively.

Now, given probabilities $p_0$ and $p'_0$, consider a 1-independent model on $R$ in which edges inside $S_{(0,0)}$ and $S_{(1,0)}$ are open with probability at least $p_0$, and the two edges between $S_{(0,0)}$ and $S_{(1,0)}$ are open with probability at least $p'_0$.
Let the random set of open edges be $E' \subseteq E$. For fixed $\theta\in[0,1]$, let $p_1$ and $p'_1$ be such that, for all such models on~$R$,
\[
 p_1\leq (1-\theta)^2\Prb(E'\in\cC_0)+\theta(1-\theta)\Prb(E'\in\cC_1)+\theta(1-\theta)\Prb(E'\in\cC_2)+\theta^2\Prb(E'\in\cC_3)
\]
and
\[
 p'_1\leq \theta^2\Prb(E'\in\cC_0)+\theta(1-\theta)\Prb(E'\in\cC_1)+\theta(1-\theta)\Prb(E'\in\cC_2)+(1-\theta)^2\Prb(E'\in\cC_3).
\]
Then by symmetry, if our original 1-independent model on $\Z^2$ lies in $\cD_{\ge p_0,\ge p'_0}$ then the renormalised model lies in $\cD_{\ge p_1,\ge p'_1}$.

We will iterate this, giving a $\theta \in [0,1]$ and a sequence of pairs $(p_i,p'_i)$, $i=0,\dots,k$ such that, for every model in $\cD_{\ge p_i,\ge p'_i}$,
the renormalised model lies in $\cD_{\ge p_{i+1},\ge p'_{i+1}}$. The theorem will follow if we can exhibit such a sequence with $p_0=p'_0=0.8457$ and $\min\{p_k,p'_k\}\ge 0.8639$, as there is then almost surely an infinite open component in the $k$ times renormalised model. Note that at each stage we may assume that in the $i$ times renormalised model each edge is open with probability \emph{exactly} $p_i$ or $p'_i$ as appropriate. Indeed, independently deleting edges with the appropriate probabilities only makes the events $\cC_i$ less likely.  

After renormalising $i$ times, we wish to minimise the right-hand sides of the two inequalities above over all models in $\cD_{p_i,p'_i}$. As in the proof of Lemma~\ref{lem:q6-p}, we relax the condition that we have a 1-independent model on $R$ to obtain the following two linear programming problems.
\[\begin{array}{r@{}r@{\ }c@{\ }l}
 \text{Minimise:}&\multicolumn{3}{l}{
 (1-\theta)^2\sum_{S\in\cC_0}x_S+\theta(1-\theta)\sum_{S\in\cC_1}x_S+ \theta(1-\theta)\sum_{S\in\cC_2}x_S+\theta^2\sum_{S\in\cC_3}x_S}\\[3pt]
 \text{or:}&\multicolumn{3}{l}{
 \theta^2\sum_{S\in\cC_0}x_S+\theta(1-\theta)\sum_{S\in\cC_1}x_S+ \theta(1-\theta)\sum_{S\in\cC_2}x_S+(1-\theta)^2\sum_{S\in\cC_3}x_S}\\[6pt]
\text{Subject to:}
 &x_S   & \ge & 0,\\
 &y_S   & = &\sum_{T\supseteq S}x_T,\\
 &y_{S\cup\{e\}} & = &p_i\cdot y_S,\\
 &y_{S\cup\{f\}} & = &p'_i\cdot y_S,\\
 &y_{\emptyset}  & = &1,
\end{array}\]
where $S$ runs over all subsets of $E$, $e$ runs over all edges induced by $S_{(0,0)}$ or $S_{(1,0)}$ that are vertex-disjoint from all edges in $S$, and $f$ runs over the edges $\{(1,0),(2,0)\}$ and $\{(1,1),(2,1)\}$ that are vertex-disjoint from all edges in~$S$. The first of these optimisation problems yields a suitable value for $p_{i+1}$ and the second yields a suitable value for $p'_{i+1}$.

Running the above LP problems with $\theta=0.18$ using the \texttt{Gurobi} optimisation package we can find suitable values for $p_i$ and~$p'_i$, which we checked by confirming that the dual LP problem was feasible. The results are listed in Table~\ref{table:pi}. For $k=13$ we see that $p_k,p'_k\ge 0.8639$, and hence the model percolates by the results of~\cite{balister2005continuum}.
\end{proof}
    
\begin{table}
\centering
\begin{tabular}{cll} \toprule
\multicolumn{1}{c}{$i$} & \multicolumn{1}{c}{$p_i$} & \multicolumn{1}{c}{$p_i'$}\\
\midrule
0&0.845700&0.845700\\
1&0.859167&0.829055\\
2&0.856981&0.831846\\
3&0.857370&0.831456\\
4&0.857391&0.831616\\
5&0.857546&0.831779\\
6&0.857826&0.832114\\
7&0.858365&0.832753\\
8&0.859396&0.833976\\
9&0.861358&0.836303\\
10&0.865058&0.840691\\
11&0.871911&0.848815\\
12&0.884171&0.863343\\
13&0.904695&0.887637\\
14&0.934851&0.923277\\
\bottomrule
\end{tabular}
\caption{Suitable values for $p_i$ and $p'_i$ (with $\theta=0.18$).}
\label{table:pi}
\end{table}

We remark that there is no need to use the same value of $\theta$ for every renormalisation in the proof of Theorem~\ref{thm:z2-upper-bound}. Indeed, it seems rather unlikely that the optimal sequence of $\theta$ values is constant. We have, however, been unable to find a sequence that gives $\pmax(\Z^2) \leq 0.8456$.
Note that taking $\theta=0$ in the above, so that the $\cL_u$ are as illustrated in Figure~\ref{fig:z2-upper-bound}, yields only $\pmax(\Z^2)\leq 0.8463$. Moreover, the rather simpler homogeneous renormalisation where we take $\cL_u=S_u^{0,0}$ for all $u$, so that in the renormalised model we assume all edges are open with the same probability, gives $\pmax(\Z^2)\leq 0.8493$.

The renormalisation used in the proof of Theorem~\ref{thm:z2-upper-bound} can be combined with some ideas from~\cite{balister2005continuum} to give the following corollary.

\begin{corollary}\label{cor:z2strong}
 In any model in $\cD_{\ge 0.8457}(\Z^2)$ and for any $C>0$, the probability that there is an open path crossing the rectangle $[0,Cn]\times[0,n]$ from left to right tends to $1$ as $n\to\infty$. In particular, there are almost surely open cycles enclosing any bounded region of the plane.
\end{corollary}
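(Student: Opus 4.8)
The plan is to bootstrap off the proof of Theorem~\ref{thm:z2-upper-bound}. That proof shows that iterating the $2\times2$ renormalisation $k$ times sends any model in $\cD_{\ge 0.8457}(\Z^2)\subseteq\cD_{\ge 0.8457,\ge 0.8457}$ to a model $\mu^{(k)}\in\cD_{\ge p_k,\ge p_k'}$, and from Table~\ref{table:pi} we may fix $k$ (for instance $k=14$) so that $q:=\min\{p_k,p_k'\}>\tfrac89$, whence $\mu^{(k)}\in\cD_{\ge q}(\Z^2)$. The feature of the coupling between the original model $\mu$ and $\mu^{(k)}$ recorded in the proof of Theorem~\ref{thm:z2-upper-bound} is that an open left--right crossing of an $a\times b$ grid of renormalised sites in $\mu^{(k)}$ forces an open left--right crossing of $\mu$ confined to the corresponding region, which lies inside an original rectangle of width $O(2^k a)$ and height $O(2^k b)$. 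Since $2^k$ is an absolute constant, the probability that $\mu$ has a left--right crossing of $[0,Cn]\times[0,n]$ is bounded below by the probability that $\mu^{(k)}$ has a left--right crossing of a grid of renormalised sites of width $\Theta(Cn)$ and height $\Theta(n)$. It therefore suffices to prove the crossing statement for an arbitrary model in $\cD_{\ge q}(\Z^2)$ with $q>\tfrac89$.

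This last step is where the ideas of~\cite{balister05} enter, in the form of a Peierls-type argument on the dual lattice; since renormalising a few extra steps has given us a comfortable value of $q$, we can run it crudely. A left--right open crossing of $[0,W]\times[0,H]$ fails precisely when there is a top--bottom path of closed dual edges. Such a dual path is a self-avoiding walk of length $\ell\ge H$, and the dual edges in its odd positions are pairwise non-adjacent, so $1$-independence makes their states jointly independent and the probability that a fixed such path is entirely closed is at most $(1-q)^{\ell/2}$. There are at most $O(W)\cdot3^{\ell}$ self-avoiding dual walks of length $\ell$ starting on the top side, so the probability that a closed top--bottom dual path exists is at most $O(W)\sum_{\ell\ge H}(3\sqrt{1-q})^{\ell}$, which tends to $0$ as $H\to\infty$ because $q>\tfrac89$ gives $3\sqrt{1-q}<1$. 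Taking $W=\Theta(H)$ yields the crossing probability $\to1$, as required.

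The statement about enclosing cycles then follows in the standard way. An open cycle surrounding a fixed bounded region $B$ is produced by four open crossings of four long rectangles arranged, at some scale $m$, to overlap consecutively in a frame around $B$. For each fixed $m$ every one of these crossings has probability at least $1-\eps(m)$ with $\eps(m)\to0$ by the crossing estimate above, so all four occur simultaneously with probability at least $1-4\eps(m)$; letting $m\to\infty$ shows that an open cycle surrounds $B$ with probability $1$, and a countable exhaustion of the plane then gives the conclusion almost surely for every bounded region.

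I expect the only real care to be needed in the pull-back step: one must check that a crossing of the renormalised grid yields a genuine left--right crossing of $[0,Cn]\times[0,n]$ rather than of a slightly enlarged or displaced rectangle, which is handled by crossing a marginally wider renormalised grid so that the induced original path is forced past $x=0$ and past $x=Cn$ while staying within height $n$. It is also worth noting that we deliberately push $q$ strictly above $\tfrac89$, by renormalising beyond the $k=13$ used for Theorem~\ref{thm:z2-upper-bound}, so that the argument closes at the boundary edge-probability $0.8457$ with no reliance on sharp connective-constant estimates.
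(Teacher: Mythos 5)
Your overall strategy (renormalise a fixed number of times, then run a Peierls/dual-path argument, then pull crossings back) differs from the paper's, which instead renormalises at a scale depending on $n$: using the simpler bound from~\cite{balister05} that a model in $\cD_{\ge 1-q}(\Z^2)$ renormalises into $\cD_{\ge 1-10q^2}(\Z^2)$ (valid here because those good events are contained in the ones used for Theorem~\ref{thm:z2-upper-bound}), the edge probabilities $p_i,p_i'$ tend to $1$, and for a rectangle of height $n$ one renormalises to level $i$ with $2^i\approx n$, so the crossing needs only about $2C+2$ renormalised edges to be open and a bounded union bound finishes the job. No counting over long dual paths is ever needed.

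The reason the paper avoids that counting is precisely where your argument has a genuine gap. You claim that the dual edges in odd positions of a self-avoiding dual path are pairwise non-adjacent and hence, by $1$-independence, jointly independent, giving the bound $(1-q)^{\ell/2}$ and the threshold $3\sqrt{1-q}<1$, i.e.\ $q>\tfrac89$. But $1$-independence is a condition on the \emph{primal} (here, renormalised) lattice, and vertex-disjointness of dual edges does not imply vertex-disjointness of the primal edges they cross: if the dual path makes a U-turn around a primal vertex $w$ (three sides of the unit dual face centred at $w$), then its first and third edges are dual-vertex-disjoint yet cross two primal edges sharing $w$, so their states need not be independent. To repair this you must extract a subfamily of path edges whose crossed primal edges are pairwise disjoint; since a path edge can conflict with up to four others (the other path edges on its two incident dual faces), a greedy selection only yields about $\ell/5$ of them, so the per-path bound degrades to $(1-q)^{\ell/5}$ and the Peierls sum requires $3(1-q)^{1/5}<1$, i.e.\ $q>1-3^{-5}\approx 0.9959$. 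This is not attained at any level listed in Table~\ref{table:pi} (the best is about $0.9233$ at $i=14$), so your fixed choice $k=14$ with the $8/9$ threshold does not close; to push $q$ high enough at a fixed level you would need exactly the $q\mapsto 10q^2$ recursion from~\cite{balister05} that the paper's proof invokes, at which point you are essentially reconstructing the paper's argument with an extra, and now unnecessary, path-counting step. The pull-back of crossings and the four-rectangle frame argument for enclosing cycles are fine as you describe them.
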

\begin{proof}
We recall from Theorem~2 from~\cite{balister2005continuum} that a much simpler $2\times2$ renormalisation argument gives
that if the original model is in $\cD_{\ge 1-q}(\Z^2)$ then the renormalised model lies in $\cD_{\ge 1-10q^2}(\Z^2)$.
Indeed, suppose each edge is open with probability at least $1-q$ and take the good event $E_{uv}$ to be the event that there is a component meeting at least three out of the four vertices in each of $S_u$ and $S_v$. If this fails, we must have at least one of the following ten sets of edges closed\footnote{For each of the first three columns, having at least one of the two edges present in both upper and lower sets implies the corresponding $2\times2$ square has at least three vertices in the same component. The last two columns ensure that these components connect up.}
\[\begin{tikzpicture}[scale=0.5]
 \begin{scope}[shift={(0,0)}]
  \foreach \x in {0,...,3} \foreach \y in {0,...,1} \draw[fill] (\x,\y) circle (0.1);
  \draw (0,0) -- (0,1) (1,0)--(1,1);
 \end{scope}
 \begin{scope}[shift={(6,0)}]
  \foreach \x in {0,...,3} \foreach \y in {0,...,1} \draw[fill] (\x,\y) circle (0.1);
  \draw (1,0) -- (1,1) (2,0)--(2,1);
 \end{scope}
 \begin{scope}[shift={(12,0)}]
  \foreach \x in {0,...,3} \foreach \y in {0,...,1} \draw[fill] (\x,\y) circle (0.1);
  \draw (2,0) -- (2,1) (3,0)--(3,1);
 \end{scope}
 \begin{scope}[shift={(0,-3)}]
  \foreach \x in {0,...,3} \foreach \y in {0,...,1} \draw[fill] (\x,\y) circle (0.1);
  \draw (0,0) -- (1,0) (0,1)--(1,1);
 \end{scope}
 \begin{scope}[shift={(6,-3)}]
  \foreach \x in {0,...,3} \foreach \y in {0,...,1} \draw[fill] (\x,\y) circle (0.1);
  \draw (1,0) -- (2,0) (1,1)--(2,1);
 \end{scope}
 \begin{scope}[shift={(12,-3)}]
  \foreach \x in {0,...,3} \foreach \y in {0,...,1} \draw[fill] (\x,\y) circle (0.1);
  \draw (2,0) -- (3,0) (2,1)--(3,1);
 \end{scope}
 \begin{scope}[shift={(18,0)}]
  \foreach \x in {0,...,3} \foreach \y in {0,...,1}
  \draw[fill] (\x,\y) circle (0.1);
  \draw (0,0) -- (1,0) (1,1)--(2,1);
 \end{scope}
 \begin{scope}[shift={(24,0)}]
  \foreach \x in {0,...,3} \foreach \y in {0,...,1} \draw[fill] (\x,\y) circle (0.1);
  \draw (2,0) -- (3,0) (1,1)--(2,1);
 \end{scope}
 \begin{scope}[shift={(18,-3)}]
  \foreach \x in {0,...,3} \foreach \y in {0,...,1} \draw[fill] (\x,\y) circle (0.1);
  \draw (1,0) -- (2,0) (0,1)--(1,1);
 \end{scope}
 \begin{scope}[shift={(24,-3)}]
  \foreach \x in {0,...,3} \foreach \y in {0,...,1} \draw[fill] (\x,\y) circle (0.1);
  \draw (1,0) -- (2,0) (2,1)--(3,1);
 \end{scope}
\end{tikzpicture}\]
Hence, $\Prb(E_{uv})\ge 1-10q^2$. Note that this lower bound on edge probabilities also applies to the renormalisation used in the proof of Theorem~\ref{thm:z2-upper-bound}, since the good events there contain the good events here. Thus, once $p_k,p'_k>0.9$ in the proof of Theorem~\ref{thm:z2-upper-bound} (see Table~\ref{table:pi}), we
can inductively choose $p_{i+1}=p'_{i+1}=1-10q_i^2$
where $q_i=1-\min\{p_i,p'_i\}$ for all $i\ge k$. It is clear that then $p_i\to 1$ rapidly as $i\to\infty$.

Now given any $\eps>0$ we can find some $i_0$ such that $p_i,p'_i>1-\eps/\ceil{2C+1}$ for all $i\ge i_0$.
Suppose $n\ge 2^{i_0}$ and pick $i$ so that $2^i\le n<2^{i+1}$. Then in the $i$ times renormalised model we have an open path from $(-1,0)$ to $(\ceil{2C},0)$ 
with probability at least $1-\eps$. It follows that in the original model there is an open path in $\Z\times\{0,\dots,n\}$
starting before $x=0$ and ending at or after $x=\ceil{2C}2^i\ge Cn$ with probability at least $1-\eps$. Thus we have a path crossing $[0,Cn]\times[0,n]$ from left to right with probability at least $1-\eps$ as desired.

For the last part, fix a bounded set and let $m_0$ be large enough that $(-m_0,m_0)^2$ contains it. Then for all $m\geq m_0$, if there are open paths crossing the rectangles $[-2m,2m]\times[m,2m]$ and $[-2m,2m]\times[-2m,-m]$ from left to right and the rectangles $[-2m,-m]\times[-2m,2m]$ and $[m,2m]\times[-2m,2m]$ from top to bottom, then there is an open cycle enclosing the bounded set. By the above, in each case such a path is present with probability $1-o(1)$ as $m\to\infty$, so such an open cycle exists with probability~$1$. It follows by a union bound that, almost surely, for every bounded set there is an open cycle enclosing it.
\end{proof}

As noted in~\cite{balister2005continuum}, it follows from Corollary~\ref{cor:z2strong} that for every model in $\cD_{\geq0.8457}(\Z^2)$ the infinite component is almost surely unique: otherwise pick two vertices in different infinite components, then there is an open cycle enclosing both of them and this connects the two infinite components. It also follows that there is no infinite component in the dual graph associated to such models.

In Theorem~2 of~\cite{balister2005continuum} it was claimed that under every model in $\cD_{\geq 0.8639}$ the origin is in an infinite component with positive probability, but this result does not seem to follow from the proof given. However, we can now adapt the renormalisation used to prove Theorem~\ref{thm:z2-upper-bound} to show this claim holds for all models in $\cD_{\geq 0.8459}(\Z^2)$.

\begin{theorem}\label{thm:origin}
In any model in $\cD_{\ge 0.8459}(\Z^2)$ the origin is in an infinite open component with positive probability.
\end{theorem}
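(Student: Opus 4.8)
The plan is to apply the renormalisation from the proof of Theorem~\ref{thm:z2-upper-bound}, drive the edge probabilities close to $1$, run a contour argument at that level, and then transfer the conclusion back down to the actual origin. Since $\cD_{\ge 0.8459}(\Z^2)\subseteq\cD_{\ge 0.8457}(\Z^2)$, the $2\times2$ renormalisation of Theorem~\ref{thm:z2-upper-bound} applies; iterating it and then, once the edge probabilities exceed $0.9$, switching to the simpler $2\times2$ renormalisation used in Corollary~\ref{cor:z2strong}, one obtains for every $k\in\N$ a coupled $1$-independent bond percolation model $\mu^{(k)}$ on $\Z^2$ in which every edge is open with probability at least $1-\delta_k$, where $\delta_k\to0$ as $k\to\infty$, and in which a single vertex corresponds to a $2^k\times2^k$ block of original sites. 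Arranging the tilings so that the origin always lies in the corner of the block belonging to the vertex $\mathbf0$, it suffices to show that, with probability bounded below by a positive constant, the vertex $\mathbf0$ of $\mu^{(0)}=\mu$ lies in an infinite open component.

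For the top level I would run a Peierls argument on $\mu^{(k)}$: if $\mathbf0$ is not in an infinite component then it is enclosed by a closed dual circuit; a dual circuit of length $\ell$ contains $\Omega(\ell)$ pairwise non-adjacent primal edges, so by $1$-independence it is closed with probability at most $\delta_k^{\Omega(\ell)}$, and there are at most $\ell\cdot3^\ell$ dual circuits of length $\ell$ around $\mathbf0$; hence $\Prb(\mathbf0\text{ not in an infinite component of }\mu^{(k)})\le\sum_{\ell\ge4}\ell\,3^\ell\delta_k^{\Omega(\ell)}\to0$ as $k\to\infty$. (Alternatively one can invoke the Liggett--Schonmann--Stacey domination of a $1$-dependent field with marginals close to $1$ by a supercritical Bernoulli bond percolation on $\Z^2$ and quote the classical result there.)

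The heart of the matter is transferring this through the $k$ renormalisation levels. The renormalisation has the property that if $\mathbf0$ lies in an infinite open component of $\mu^{(i+1)}$, then $\mu^{(i)}$ has an infinite open component meeting at least two of the three distinguished vertices $\cL$ of the $2\times2$ block $B$ of $\mu^{(i)}$-vertices corresponding to $\mathbf0$ (at least three of the four, at the levels using the simpler renormalisation). Let $G_i$ be the event that inside $B$ the vertex $\mathbf0$ is joined, by $\mu^{(i)}$-open edges, to at least two of those three vertices; since two $2$-subsets of a $3$-set intersect, the occurrence of $G_i$ together with ``$\mathbf0$ lies in an infinite component of $\mu^{(i+1)}$'' forces ``$\mathbf0$ lies in an infinite component of $\mu^{(i)}$''. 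Each $G_i$ depends only on the at most four $\mu^{(i)}$-edges of $B$ and on the random choice of $\cL$, and those edges are open with probability at least $p_i\ge0.8457$, so $\Prb(G_i)\ge g_i$ for a constant $g_i>0$ with $g_i\to1$ as $i\to\infty$; chaining the implications gives $\Prb(\mathbf0\in\cC_\infty(\mu))\ge\Prb\big(\{\mathbf0\in\text{infinite component of }\mu^{(k)}\}\cap G_0\cap\dots\cap G_{k-1}\big)$, which one would like to bound below by $(1-o(1))\prod_{i\ge0}g_i>0$.

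The main obstacle is exactly this last estimate: a union bound fails, since the first several levels of the LP-renormalisation have $1-p_i\approx0.15$, so $\sum_i\Prb(G_i^c)>1$. Instead one must expand the probability conditionally, level by level, and show that the conditional probability of $G_i$ given the events at higher levels is still at least $g_i$; because only finitely many $g_i$ are bounded away from $1$ while the remainder tend to $1$ fast enough that $\prod_i g_i>0$, the product bound then survives. Establishing these conditional bounds is the delicate point, as $G_i$ and the higher-level events ultimately depend on overlapping sets of edges; it should be handled by witnessing the infinite component of $\mu^{(i+1)}$ through a neighbour of $\mathbf0$ rather than through $\mathbf0$ itself, so that the good events invoked do not involve the handful of edges that $G_i$ depends on. The small gap between $0.8459$ and $0.8457$ is precisely what affords the slightly more restrictive good events needed near the origin to make this work.
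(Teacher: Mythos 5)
Your top-level Peierls argument and the combinatorial transfer step are fine, but the estimate you yourself flag as ``the heart of the matter'' is a genuine gap, and it is exactly the content of the theorem. To conclude that $\mathbf0$ joins the infinite cluster at level $i$, you need the level-$(i+1)$ infinite cluster to contain the renormalised vertex $\mathbf0$ itself, i.e.\ to use a renormalised edge incident to $\mathbf0$; every such edge is by definition a function of the level-$i$ configuration in a $4\times2$ rectangle containing the very block $B$ on which $G_i$ lives, so the overlap between $G_i$ and the higher-level events cannot be avoided (witnessing the cluster ``through a neighbour of $\mathbf0$'' only shows it meets the neighbouring block, not $\cL_B$, and so does not feed the two-out-of-three intersection argument). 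Since 1-independence gives no lower bound on $\Prb(G_i\mid\cdot)$ when the conditioning event involves edges meeting $B$, an adversarial model can correlate the four block edges with the surrounding rectangle configurations, and the claimed bound $\Prb(G_i\mid\text{higher-level events})\ge g_i$, hence the product bound $(1-o(1))\prod_i g_i$, is unsubstantiated. Because the first several $1-g_i$ are of constant size, this is not a removable technicality: the whole difficulty of the theorem (as opposed to Corollary~\ref{cor:z2strong}) is precisely this step, and your proposal does not resolve it.

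The paper's proof takes a route that avoids conditioning altogether. It maintains the single invariant that one specified edge is open: $F_0$ is the event that the edge from $(0,0)$ to $(1,0)$ is open, at each step the model is reflected in the line $x+y=1$ so that this known-open edge becomes the edge from $(1,0)$ to $(1,1)$, one of the ten edges of the rectangle $S_{(0,0)}\cup S_{(1,0)}$, and $G_{k+1}$ is defined as the event that the good event $E_{(0,0)(1,0)}$ of the proof of Theorem~\ref{thm:z2-upper-bound} would hold if that reserved edge were added for free. Then $F_{k+1}=F_k\cap G_{k+1}$ holds \emph{exactly}, so a plain union bound gives $\Prb\big(\bigcap_k F_k\big)\ge 1-\sum_k\Prb(G_k^c)$; and since $G_{k+1}$ is increasing and does not involve the reserved edge, its probability is bounded below uniformly over $\cD_{\ge p_k,\ge p'_k}$ by a small modification of the LPs, yielding $\sum_k\Prb(G_k^c)\le0.99836<1$ at $p=0.8459$ (with the tail controlled by $q_{k+1}\le10q_k^2$). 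Giving away the already-open edge is what lifts each level's success probability enough for the complements to sum below $1$; that, rather than ``more restrictive good events near the origin'', is where the slack between $0.8457$ and $0.8459$ is spent. To rescue your approach you would need a mechanism of this kind that converts the dependence problem into an exact event identity amenable to a union bound, rather than conditional estimates that 1-independence cannot supply.
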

\begin{proof}
We shall inductively define nested events $F_k$ that imply that the origin is in a component of size at least $k+2$. Let $F_0$ be the event that the edge from $(0,0)$ to $(1,0)$ is open in the original model.
Now given $F_k$ in the $k$ times renormalised model
(which will imply the event that the edge from $(0,0)$ to $(1,0)$ in this model is open), we
first reflect the model in the line $x+y=1$ (i.e., map $(x,y)$ to $(1-y,1-x)$) and then renormalise as in the proof of Theorem~\ref{thm:z2-upper-bound}.
Note that this reflection preserves $\cD_{\ge p,\ge p'}$ as the $p$-edges and $p'$-edges are mapped onto edges of the same type. Also the event $F_k$ now implies that the
edge from $(1,0)$ to $(1,1)$ is open. Let $F_{k+1}$ be the event that $F_k$ holds and that the horizontal renormalised edge from $S_{(0,0)}$ to $S_{(1,0)}$ is open, i.e., that the event $E_{(0,0)(1,0)}$ (in the sense of the proof of Theorem~\ref{thm:z2-upper-bound}) holds. 

Note that any component meeting both $S_{(0,0)}$ and $S_{(1,0)}$ in large sets must also meet the edge from $(1,0)$ to $(1,1)$. Consider the component containing the origin in the restriction of the original model to the vertices corresponding to $\{(1,0),(1,1)\}$, and the component containing the origin in the restriction of the original model to the vertices corresponding to $S_{(0,0)}\cup S_{(1,0)}$. If $F_{k+1}$ holds, then the latter component strictly contains the former (as it contains some vertices from $S_{(1,0)}$) so by induction $F_{k+1}$ implies that the origin is in a component of size at least $k+3$ in the original model.

Now define $G_0=F_0$ and the event $G_{k+1}$ to be the event that, if the edge from $(1,0)$ to $(1,1)$ in the (reflected) $k$-fold renormalised model were open, then $E_{(0,0)(1,0)}$ would hold. In other words, $G_{k+1}$ is the event that adding the edge from $(1,0)$ to $(1,1)$
gives a suitable connected component in $S_{(0,0)}\cup S_{(1,0)}$. Note that $G_{k+1}$ depends only on the other nine edges
in $S_{(0,0)}\cup S_{(1,0)}$ and is clearly an increasing event as a function of these edges. Hence, if we lower bound $\Prb(G_{k+1})$ in $\cD_{p,p'}$
we also have the same bound in $\cD_{\ge p,\ge p'}$.
Now $F_{k+1}= F_k\cap G_{k+1}$, so by induction
\[
 \Prb(F_k)\ge 1-\sum_{i=0}^k\Prb(G_k^c).
\]
It is straightforward to modify the linear programming problem from the proof of Theorem~\ref{thm:z2-upper-bound} so as to minimise the probability of $G_k$ and hence find a lower bound on $F_k$ for small~$k$. For large $k$ we note that clearly $\Prb(G_k^c)\le q_k=1-p_k$, where the $k$-fold renormalised model is in $\cD_{\geq p_k,\geq p'_k}$,
and we have the inequality $q_{k+1}\le 10q_k^2$ as in the proof of Corollary~\ref{cor:z2strong}.

To show that the origin is in an infinite component with positive probability, it is sufficient to show that $\sum_{k=0}^\infty\Prb(G_k^c)<1$. Unfortunately, we cannot show this all the way down to $p=0.8457$, but we show it does hold for $p=0.8459$.
Using the results from Table~\ref{tab:origin} we have
\begin{align*}
 \sum_{k=0}^\infty\Prb(G_k^c)
 &\le \sum_{k=0}^{13}\Prb(G_k^c)+
 \sum_{k=14}^\infty q_k\\
 &\le 0.998359+ \sum_{i=1}^\infty 10^{2^i-1}q_{13}^{2^i}\\
 &\le 0.99836,
\end{align*}
where we have used that $q_{13}\le 0.0002$.
Hence, $\Prb(\cap_{k=0}^\infty F_k)>0$ and so the origin is in an infinite component with positive probability.
\end{proof}

\begin{table}
\centering
\begin{tabular}{clll} \toprule
\multicolumn{1}{c}{$i$} & \multicolumn{1}{c}{$p_i$} & \multicolumn{1}{c}{$p_i'$}&\multicolumn{1}{c}{$\Prb(G_k^c)\le$}\\
\midrule
0&0.845900&0.845900&0.154100\\
1&0.859515&0.829480&0.096201\\
2&0.857661&0.832648&0.097540\\
3&0.858670&0.832999&0.096787\\
4&0.859879&0.834568&0.095945\\
5&0.862289&0.837404&0.094255\\
6&0.866795&0.842751&0.091100\\
7&0.875072&0.852561&0.085314\\
8&0.889637&0.869816&0.075168\\
9&0.913248&0.897752&0.058828\\
10&0.945814&0.936217&0.036503\\
11&0.978577&0.974824&0.014314\\
12&0.996611&0.996024&0.002247\\
13&0.999914&0.999899&0.000057\\
\bottomrule
\end{tabular}
\caption{Bounds used in Theorem~\ref{thm:origin} (with $\theta=0.18$).}
\label{tab:origin}
\end{table}

\section{Open problems}\label{sec:open-probs}

In this section we discuss some interesting related problems on 1-independent percolation models on hypercubes and lattices. We begin by restating a question first posed by Day, Falgas-Ravry, and Hancock in~\cite{day2020long}.

\begin{problem}[\cite{day2020long}]\label{prob:qn-conn}
For $n\geq 3$, what is the largest $p = p(n)$ for which there is a model in $\cD_{\geq p}(Q_n)$ under which the hypercube is always disconnected?
\end{problem}

The simple construction used in the proof of Theorem~1.4 in~\cite{balister2012critical} shows that this~$p$ satisfies $p \geq 1/2$, but is this best possible? The case $n=2$ of Problem~\ref{prob:qn-conn}, where $Q_n=C_4$, was answered in~\cite{day2020long} where they showed that the maximum is indeed $p=1/2$ (this follows immediately from considering the expected number of edges in a model with edge probability greater than $1/2$).

It would also be interesting to determine the models above the threshold which minimise the probability that $Q_n$ is connected. When $n = 2$, it is not difficult to show that these models are exactly those supported on subgraphs with either two or four edges, with
\[
 \newcommand\verts{\draw[fill](0,0)circle(.1)(1,0)circle(.1)(0,1)circle(.1)(1,1)circle(.1);}
 \Prb(\tikz[scale=0.25]{\verts\draw(0,0)--(0,1)--(1,1)--(1,0)--(0,0);})=2p-1,\quad
 \Prb(\tikz[scale=0.25]{\verts\draw(0,0)--(0,1) (1,1)--(1,0);})=
 \Prb(\tikz[scale=0.25]{\verts\draw(0,0)--(1,0) (1,1)--(0,1);})=q^2,\quad 
 \Prb(\tikz[scale=0.25]{\verts\draw(0,0)--(1,0)--(1,1);})=
 \Prb(\tikz[scale=0.25]{\verts\draw(0,0)--(0,1)--(1,1);})=\alpha,\quad
 \Prb(\tikz[scale=0.25]{\verts\draw(0,1)--(0,0)--(1,0);})=
 \Prb(\tikz[scale=0.25]{\verts\draw(0,1)--(1,1)--(1,0);})=pq-\alpha,
\]
where $q=1-p$ and $\alpha\in[0,pq]$.

The answer is already unknown for $n = 3$, although computer experiments suggest an answer for large~$p$. Let the \emph{signs model} on a graph $G$ be the 1-independent percolation model where each vertex of $G$ is independently assigned the sign `$+$' with probability $\theta$ and the sign `$-$' otherwise. An edge is open if both its endpoints have the same sign (so each edge is open with probability $p=\theta^2 + (1 - \theta)^2$). Computational evidence strongly suggests that this model minimises the probability that $Q_3$ is connected when $p > 0.55$. 
However, for smaller $p$ there are models where connectivity is less likely, and indeed for $p<0.516$ one can find models that
are disconnected almost surely. 

\begin{conjecture}
For $p>0.55$, among models in $\cD_{\geq p}(Q_3)$ the signs model minimises the probability that $Q_3$ is connected.
\end{conjecture}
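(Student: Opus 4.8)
The plan is to turn the conjecture into a finite polynomial optimisation problem over the (compact) variety of $1$-independent models on $Q_3$ and then certify the bound. First, exactly as in the reduction at the start of the proof of Lemma~\ref{lem:q6-p}, it suffices to consider models in which every edge is open with probability \emph{exactly}~$p$: given $\mu\in\cD_{\ge p}(Q_3)$ we may independently delete edges down to marginal~$p$, obtaining a $1$-independent model whose random subgraph is a subgraph of the original, so its probability of being connected is no larger. Let $q=1-p$ and let $\theta=\tfrac12(1+\sqrt{2p-1})\in(\tfrac12,1)$, the root of $\theta^2+(1-\theta)^2=p$; since the open graph of the signs model is the disjoint union of the subgraphs induced on the $+$-vertices and on the $-$-vertices, it is connected precisely when all eight vertices receive the same sign, so the target value is $\theta^8+(1-\theta)^8$. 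Thus the conjecture is equivalent to: for $p>0.55$ and every $1$-independent $\mu$ on $Q_3$ with all edge-marginals equal to~$p$, we have $\Prb_\mu(Q_3\text{ connected})\ge\theta^8+(1-\theta)^8$, and a minimiser exists by compactness.

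Next I would set up the optimisation as in Section~\ref{sec:q3-p} but \emph{keeping} the non-linear constraints: with variables $x_S$ for $S\subseteq E(Q_3)$ and $y_S=\sum_{T\supseteq S}x_T$, minimise $\sum_{S\in\cC}x_S$ (with $\cC$ the set of connected spanning subgraphs) subject to $x_S\ge0$, $y_\emptyset=1$, $y_{\{e\}}=p$ for every edge~$e$, and the quadratic factorisations $y_{S\cup T}=y_Sy_T$ for all vertex-disjoint $S,T$. The reason this is harder than the linear program of Section~\ref{sec:q3-p} is that the factorisation constraints are genuinely needed: dropping them leaves a relaxation whose optimum is strictly below $\theta^8+(1-\theta)^8$ (for instance at $p=0.5847$ the linear relaxation gives only about $0.046$, whereas $\theta^8+(1-\theta)^8\approx0.062$ there), so the LP-duality argument used for Lemma~\ref{lem:q6-p} cannot reach the conjectured value. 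One can cut the problem down using $\mathrm{Aut}(Q_3)$ (order $48$): the feasible set and objective are invariant, so the set of minimisers is a union of orbits, and if the minimiser is unique it must be $\mathrm{Aut}(Q_3)$-invariant (consistent with being the signs model) — but one must resist simply symmetrising a given minimiser by averaging, since averaging does not preserve the product constraints $y_{S\cup T}=y_Sy_T$.

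The core of the argument is then twofold. The first, structural, step would be to show that some minimiser is supported on $\{Q_3\}\cup\{\text{disconnected subgraphs}\}$ — this is exactly the support of the signs model, and it is what happens for $Q_2=C_4$, where the extremal models are supported on the $4$-cycle together with disconnected $2$-edge subgraphs — by arguing that probability mass on a connected \emph{proper} subgraph can be redistributed (partly onto $Q_3$, partly onto disconnected configurations) without violating $1$-independence or the marginals and without increasing $\Prb(\text{connected})$; if this succeeds, the objective collapses to $y_{E(Q_3)}=\Prb(Q_3)$, and one is left to show $\Prb(Q_3)\ge\theta^8+(1-\theta)^8$ for such models (note that the factorisations already force $y_M=p^4$ for every perfect matching~$M$, hence $\Prb(Q_3)\le p^4$, so the required estimate is the matching \emph{lower} bound). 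The second step, whether or not the structural reduction goes through, is to produce a Positivstellensatz certificate: express $\sum_{S\in\cC}x_S-(\theta^8+(1-\theta)^8)$ as a non-negative combination of the $x_S$ and of products of the defining (affine and quadratic) constraints, located numerically by an SDP/SOS solver and then verified exactly over $\mathbb{Q}(\sqrt{2p-1})$, in the spirit of the rational dual-feasibility checks already used in the paper.

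The main obstacle, I expect, is the combination of non-convexity and sharpness. The factorisation constraints carve out a variety rather than a polytope, so neither LP duality nor a naive extreme-point analysis applies, and a sum-of-squares hierarchy over the $2^{12}$-dimensional configuration space is sizeable even after symmetry reduction. More fundamentally, the hypothesis $p>0.55$ is essential and apparently close to optimal — below roughly $0.55$ the signs model is beaten by other $1$-independent models, and by $p<0.516$ there are almost-surely-disconnected models — so any proof must be quantitatively tight enough to detect exactly where the extremal family changes. Concretely, I would expect the real work to lie in identifying the near-optimal competing models valid for $p$ slightly below the threshold and showing that they are strictly dominated by the signs model once $p>0.55$, or, failing a clean description of those competitors, in certifying the bound uniformly on $p\in(0.55,1)$, which might in the end force a slightly different threshold constant.
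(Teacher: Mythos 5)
The statement you are addressing is not a theorem of the paper: it appears in Section~\ref{sec:open-probs} as an open conjecture, supported only by computational evidence ("computational evidence strongly suggests\dots"), and the authors offer no proof, nor even a claim that the constant $0.55$ is the right threshold. So there is no argument in the paper to compare yours against, and the relevant question is whether your proposal actually proves the conjecture. It does not. Your preliminary reductions are sound: passing to models with all edge-marginals exactly $p$ by independent deletion is the same trick the paper uses in Section~\ref{sec:q3-p}; the target value $\theta^8+(1-\theta)^8$ with $\theta=\tfrac12(1+\sqrt{2p-1})$ is the correct connectivity probability of the signs model; the moment constraints $y_{S\cup T}=y_Sy_T$ for vertex-disjoint $S,T$ do characterise $1$-independence for Boolean edge indicators; and your observation that the paper's LP relaxation cannot reach the conjectured value (e.g.\ $0.0463$ versus roughly $0.0616$ at $p=0.5847$) correctly identifies why Lemma~\ref{lem:q6-p}'s method is insufficient here.

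However, the two steps that would constitute the actual proof are only described, not carried out. The structural claim that some minimiser is supported on $\{Q_3\}\cup\{\text{disconnected subgraphs}\}$ is asserted as something one "would argue" by redistributing mass, with no construction of the redistribution and no verification that it preserves $1$-independence (which, as you note yourself, is a non-convex constraint set on which averaging and most local perturbations fail); the analogy with $C_4$ is suggestive but the $n=2$ classification in the paper is obtained by direct enumeration of a tiny polytope, not by a redistribution argument that could generalise. Even granting that reduction, the remaining inequality $\Prb(\text{all of }Q_3\text{ open})\ge\theta^8+(1-\theta)^8$ for such models is left unproved. The second route, an exact Positivstellensatz/SOS certificate over $\mathbb{Q}(\sqrt{2p-1})$ valid uniformly for $p\in(0.55,1)$, is purely hypothetical: no certificate is exhibited, and you concede the symmetry-reduced SDP may be large and that the threshold might shift. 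Since you explicitly flag these as expected obstacles rather than resolved steps, what you have is a reasonable research programme for attacking an open problem, not a proof; the conjecture remains open both in the paper and after your proposal.
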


We also hypothesise that for general $n\geq 3$, the signs model minimises the probability that $Q_n$ is connected when $p$ is large enough. It should be noted that sadly none of the models listed above which minimise the probability of connectivity when $n=2$ are signs models (unless $p=1$).

\begin{problem}
Is it true that for $n\ge3$, if $p$ is sufficiently close to~$1$, then among models in $\cD_{\geq p}(Q_n)$ the signs model minimises the probability that $Q_n$ is connected? If so, how close to~$1$ does~$p$ need to be?
\end{problem}

If the signs model is indeed optimal on $Q_3$, say for $p\geq 0.5720$, then this leads to improved bounds on $\pgiant$ and $\lim_{n\to\infty}\pmax(\Z^n)$ as follows. Start by applying the first step of the renormalisation in the proof of Theorem~\ref{thm:cube}, with $k=3$, to a model in $\cD_{\geq p}(Q_n)$ to obtain a model on $Q_{n-3}$ in which the probability that an edge is open given both its endpoints are open is at least
\begin{equation*}
    1-\frac{2^{16}(1-p)^8}{\Big(\big(1+\sqrt{2p-1}\big)^8+\big(1-\sqrt{2p-1}\big)^8\Big)^2}.
\end{equation*}
This expression is greater than $p$ when $p\geq 0.5720$ so for such $p$ we can apply this renormalisation repeatedly until we obtain a constant $I=I(p)$ and a model in $\cD(Q_{n-I},p_v,p_e)$ where $p_v>0$ and $p_e>18/19$ such that the existence of a giant component in this model implies the existence of a giant component in the original. It follows from Proposition~\ref{prop:cube} that there exists $C=C(p)>0$ such that with high probability this model, and hence the original model, has a component containing at least a $C$ proportion of its vertices. Thus, if the signs model is optimal on $Q_3$ for $p\geq 0.5720$, then $\pgiant\leq 0.5720$. Similarly, $\lim_{n\to\infty}\pmax(\Z^n)\leq 0.5720$ under the same assumption.

We have shown that when each edge of the hypercube is open with probability at least $0.5847$ (and $n$ is large), there is a component containing a constant fraction of the vertices with high probability, so in particular $\pgiant\leq 0.5847$. The best lower bound on this threshold is $1/2$ (using the model from the proof of Theorem~1.4 in~\cite{balister2012critical} again). It was conjectured in~\cite{falgas-ravry20231-independent} that this is tight, that is, $\pgiant=1/2$: they conjectured that for all $p>1/2$, under every model in $\cD_{\geq p}(Q_n)$ the hypercube contains a component of size at least $\big(\frac{1+\sqrt{2p-1}}{2}-o(1)\big)\cdot2^n$ with probability $1-o(1)$. 

\begin{problem}
    What is the value of $\pgiant$?
\end{problem}

It is not clear to us whether the threshold probabilities for always disconnecting $Q_n$ (as in Problem~\ref{prob:qn-conn}) have a limit as $n\to\infty$, but if they do it would be interesting to know whether this limit differs from $\pgiant$.

We have improved the bounds on $\pmax(\Z^2)$ and $\lim_{n\to\infty}\pmax(\Z^n)$ so that they now stand at
\[\begin{array}{l@{\ }c@{\ }c@{\ }c@{\ }l}
    0.555197\dots &\leq &\pmax(\Z^2)&\leq &0.8457,\\
    0.535898\dots &\leq &\lim_{n\to\infty}\pmax(\Z^n) &\leq& 0.5847.
\end{array}\]
There remains a large gap between the upper and lower bounds in both cases and it would be interesting to reduce either of these gaps. We note that Result~\ref{res:high-conf} shows that the lower bound for $\pmax(\Z^2)$ is unlikely to be correct, and suggests that the correct value of $\pmax(\Z^2)$ should be higher than $\lim_{n\to\infty}\pmax(\Z^n)$. We close by restating the fundamental Questions~\ref{question:Z2} and~\ref{question:Zn} from above, recalling that the authors of~\cite{falgas-ravry20231-independent} conjecture that $\pmax(\Z^n)=4-2\sqrt{3}$ for some $n\geq 3$.

\questionZtwo*
\questionZn*

\paragraph{Acknowledgements}
We would like to thank the anonymous referee for their helpful comments.

\bibliographystyle{scott.bst}
\bibliography{refs}

\end{document}